\documentclass[12pt, amsfonts]{amsart}


\usepackage{amsmath,amssymb,amsthm}
\usepackage[top=3.5cm, bottom=3.5cm, left=2.5cm, right=2.5cm, includefoot]{geometry}
\usepackage[colorlinks=true]{hyperref}
\usepackage[all]{xy}


\numberwithin{equation}{section}

\SelectTips{eu}{12}


\newtheorem{theorem}{Theorem}[section]
\newtheorem{proposition}[theorem]{Proposition}
\newtheorem{lemma}[theorem]{Lemma}
\newtheorem{corollary}[theorem]{Corollary}

\theoremstyle{definition}
\newtheorem{definition}[theorem]{Definition}
\newtheorem{example}[theorem]{Example}

\theoremstyle{remark}
\newtheorem{remark}[theorem]{Remark}


\newcommand{\Z}{\mathbb{Z}}
\newcommand{\Q}{\mathbb{Q}}
\newcommand{\R}{\mathbb{R}}
\newcommand{\C}{\mathbb{C}}

\newcommand{\U}{\mathrm{U}}
\newcommand{\UFP}{\mathcal{U}}
\newcommand{\Ures}{\mathrm{U_{res}}}
\newcommand{\diag}{\mathrm{diag}}
\newcommand{\UH}{\mathrm{UH}}


\title{Hilbert bundles with ends}

\author{Tsuyoshi Kato}
\address{Department of Mathematics, Kyoto University, Kyoto, 606-8502, Japan}
\email{tkato@math.kyoto-u.ac.jp}

\author{Daisuke Kishimoto}
\address{Department of Mathematics, Kyoto University, Kyoto, 606-8502, Japan}
\email{kishi@math.kyoto-u.ac.jp}

\author{Mitsunobu Tsutaya}
\address{Faculty of Mathematics, Kyushu University, Fukuoka 819-0395, Japan}
\email{tsutaya@math.kyushu-u.ac.jp}

\subjclass[2010]{57R20, 57R22, 46L80, 46C05, 34K08}

\keywords{Hilbert bundle, end, unitary operator of finite propagation, uniform Roe algebra, pushforward of a vector bundle, spectral decomposition, Fourier transform, harmonic oscillator}


\begin{document}

  \maketitle

  \baselineskip.525cm

  \begin{abstract}
    Given a countable metric space, we can consider its end. Then a basis of a Hilbert space indexed by the metric space defines an end of the Hilbert space,
     which is a new notion and different from an end as a metric space.
    Such an indexed basis also defines unitary operators of finite propagation, and these operators preserve an end of a Hilbert space. Then, we can define a Hilbert bundle with end, which lightens up new structures of Hilbert bundles. In a special case, we can define characteristic classes of Hilbert bundles with ends, which are new invariants of Hilbert bundles. We show Hilbert bundles with ends appear in natural contexts. First, we generalize the pushforward of a vector bundle along a finite covering to an infinite covering, which is a Hilbert bundle with end under a mild condition. Then we compute characteristic classes of some pushforwards along infinite coverings. Next, we will show the spectral decompositions of nice differential operators give rise to Hilbert bundles with ends, which elucidate new features of spectral decompositions. The spectral decompositions we will consider are the
    Fourier transform and the harmonic oscillators.
  \end{abstract}

  \setcounter{tocdepth}{1}
  \tableofcontents

  \parskip .05in
  \parindent .0pt


  \section{Introduction}\label{Introduction}

  In this paper, we introduce a new structure of a Hilbert bundle, called an \emph{end}, which
    enables us to elucidate hidden topological features of a Hilbert bundle. We show
  that it appears in some natural contexts in mathematical physics. We define characteristic classes of a Hilbert bundle with end in an important special case, which are our basic tools for investigation. It is significant that our approach to  Hilbert bundles with ends is based on algebraic topology, though ends of spaces are studied standardly by analytic methods.


  \subsection{End of a Hilbert space}

  We start with defining an end of a Hilbert space. Let $X$ be a topological space. Given a compact set $K\subset X$, an \emph{end} of $X$ with respect to $K$ is a suitable equivalence class of descending sequences $W\supset U_1\supset U_2\supset\cdots$ such that
  \[
    \bigcap_{i=1}^\infty\mathrm{cl}(U_i)=\emptyset
  \]
  where $W$ is a neighborhood of $X-K$. Then the
  end of a CW complex captures a particular behavior at infinity, or out of a compact set, of a space. There is an algebraic analogy such that an end of a chain complex can be defined. See \cite{HR} for a comprehensive study of ends of CW complexes and chain complexes.

  What is an end of a Hilbert space? Trivially, we can define an end of a Hilbert space as a topological space. However, this definition only respects topology and does not respect structures of a Hilbert space, especially linearity. Hence,
   an end of a Hilbert space should be defined differently. We observe a simple example to get hold of a definition of an end of a Hilbert space. Let $L^2(S^1)$ denote the Hilbert space of square integrable complex valued functions on $S^1$, where $S^1$ is understood as the unit circle in $\C$. The Hilbert space $L^2(S^1)$ has orthonormal basis $\{z^n\}_{n\in\Z}$, and asymptotic features of a function $f(z)=\sum_{n\in\Z}a_nz^n\in L^2(S^1)$ are described by the behavior of higher terms $\sum_{|n|\ge N}a_nz^n$ as $N$ goes to $\infty$. Then a sequence of subspaces of $L^2(S^1)$
  \[
    H_N\supset H_{N+1}\supset H_{N+2}\supset\cdots
  \]
  can be thought of as an end of $H$ with respect to the basis $\{z^n\}_{n\in\Z}$, where $H_M$ is the completion of a subspace spanned by $z^n$ for $|n|\ge M$. Summarizing, an end of the Hilbert space $L^2(S^1)$ is defined by a particular basis $\{z^n\}_{n\in\Z}$,
  and the ``shape" of the end is determined by an end of the index set $\Z$ of the basis.

  With the above observation, we proceed to define an end of a Hilbert space. Let $H$ be a separable Hilbert space, and let $I$ be a countable metric space with base point $i_0$. Given an orthonormal basis $\{e_i\}_{i\in I}$ indexed by  $I$, we set $H_n$ be the completion of a subspace of $H$ spanned by $e_i$ with $d(i,i_0)\ge n$. Then we get a sequence of subspaces
  \[
    H=H_0\supset H_1\supset H_2\supset\cdots
  \]
  which is an end of $H$ with respect to the basis $\{e_i\}_{i\in I}$. Observe that an end of a pointed countable metric space $I$ determines the "type" of an end of $H$, and a basis $\{e_i\}_{i\in I}$ realizes the end type determined by $I$ in $H$. An end of a Hilbert space is a new notion, which appears in some important situations such as Fourier analysis as above. Note that by definition, an end of a Hilbert space is different from an end as a topological space, as mentioned above.

  Suppose we are given two orthonormal bases of $H$ which are indexed by the same pointed countable metric space $I$. When are the corresponding ends equivalent? Let $E=\{H_n\}_{n\ge 0}$ and $E'=\{H_n'\}_{n\ge 0}$ be ends of $H$ corresponding to given two orthonormal bases indexed by $I$. Since ends of a Hilbert space $H$ should represent its behavior at infinity, the ends $E$ and $E'$ should be equivalent if they differ only by a finite fluctuation. Then, we define that two ends $E$ and $E'$ are equivalent if there is $L>0$ such that for each $n$, the inclusions hold:
  \[
    H_{n-L}\subset H'_n\subset H_{n+L}.
  \]
  Note that two ends are equivalent if their filters satisfy the above inclusions for large $n$  so that the basepoint of $I$ does not matter. The   equivalence of ends can be representerd by unitary operators of finite propagation. Primitive versions of such operators appear in several contexts of analysis and geometry \cite{CGT,C,GNVW,R}. We define unitary operators of finite propagation in a quite general setting, which broadens up possible directions of study.

  \begin{definition}
    Let $H$ be a separable Hilbert space equipped with a specific basis $\{e_i\}_{i\in I}$ indexed by a countable metric space $I$. A unitary operator $U$ on $H$ is of \emph{finite propagation} if there is $L>0$ such that
    \[
      U_{ij}=0\quad\text{for}\quad d(i,j)>L
    \]
    where $U=(U_{ij})_{i,j\in I}$ is the matrix description defined by the basis $\{e_i\}_{i\in I}$.
  \end{definition}

  Let $\{e_i\}_{i\in I}$ and $\{e'_i\}_{i\in I}$ be bases of $H$ indexed by a pointed countable metric space $I$. Then the corresponding ends are equivalent if and only if there is a unitary operator $U$ of finite propagation with respect to $\{e_i\}_{i\in I}$ such that for each $i$,
  \[
    Ue_i=e'_i.
  \]


  \subsection{Hibert bundle with end}

  We have defined an end of a Hilbert space and its automorphisms as above. We next define a Hilbert bundle with end. There are several inequivalent notions of Hilbert bundles as in \cite{D}. In this paper, a Hilbert bundle
  means a map $E\to B$ such that each fiber is isometric with a Hilbert space $H$, it is locally trivial, and the transition functions are continuous maps into $\U(H)$, the space of unitary operators on $H$ with the operator norm topology. By the classical result of Kuiper \cite{K}, the classifying space $B\U(H)$ is contractible, implying every Hilbert bundle over a CW complex must be trivial. Hence, there is no topological feature of Hilbert bundles over CW complexes. In order to understand hidden topological features of a Hilbert bundle, we need to consider an additional structure of a bundle. It is standard to globalize a structure of a Hilbert space to an additional Hilbert bundle. A typical example is a polarization of a Hilbert bundle (see \cite{PS}). Here, we consider an end of a Hilbert space to be globalized to an additional structure of a Hilbert bundle.

  Let $H$ be a Hilbert space equipped with a specific basis indexed by a countable metric space, and let $\UFP(H)$ denote the group of unitary operators of finite propagations on $H$. We will equip the group $\UFP(H)$ with the inductive limit topology with respect to propagation (See Section \ref{Unitary operators of finite propagation} for details), instead of the operator norm topology. An advantage of the weak topology can be seen in a connection to the corresponding uniform Roe algebra, which will be precisely discussed in Section \ref{Unitary operators of finite propagation}. Now we define a Hilbert bundle with end.

  \begin{definition}
    Let $H$ be a Hilbert space equipped with a specific basis indexed by a countable metric space. We say that a Hilbert bundle with fiber $H$ has an \emph{end} if its transition functions are continuous maps into $\UFP(H)$.
  \end{definition}

  As Hilbert bundles with ends are defined in terms of unitary operators of finite propagation, we need to know their properties, especially topological ones, in order to investigate Hilbert bundles with ends. In the previous work \cite{KKT1,KKT2}, the authors studied the topological group of unitary operators of finite propagation on $\ell^2(\Z^n,\C)$, denoted by $\UFP(n)$, and the homotopy type of $\UFP(1)$ is explicitly described in \cite{KKT1}. In this paper, we will further prove the Bott peroidicity of $\UFP(1)$ by passing through the uniform Roe algebra, so that the homotopy type of $B\UFP(1)$ is explicitly described.
  In particular, we can determine the rational cohomology of $B\UFP(1)$ as follows. Let $\mathcal{B}$ be a basis of a $\Q$-vector space $\ell^\infty(\Z,\Z)_S^\vee$ of uncountable dimension (see Section \ref{Unitary operators of finite propagation} for the definition of this vector space).

  \begin{theorem}
    [Corollary \ref{cohomology U(1)}]
    The rational cohomology of $B\UFP(1)$ is given by
    \[
      H^*(B\UFP(1);\Q)=\Q[\alpha_n(b)\mid b\in\mathcal{B}]\otimes\Lambda(\beta_n\mid n\ge 1)
    \]
    where $|\alpha_n(b)| = 2n$ and $|\beta_n|=2n-1$.
  \end{theorem}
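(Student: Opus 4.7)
The plan is to derive the rational cohomology directly from the explicit homotopy type of $B\UFP(1)$ that the paper establishes via Bott periodicity for $\UFP(1)$. Three successive stages suffice: (i) read off the rational homotopy groups of $\UFP(1)$ from Bott periodicity and the $K$-theory of $C^*_u(\Z)$; (ii) rationally decompose $B\UFP(1)$ as a product of Eilenberg--MacLane spaces using that $\UFP(1)$ is a topological group; (iii) compute the rational cohomology of each factor.

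For step (i), I would combine the Bott periodicity established elsewhere in the paper with the already-known identifications $\pi_0(\UFP(1)) \cong K_1(C^*_u(\Z))$ and $\pi_1(\UFP(1)) \cong K_0(C^*_u(\Z))$ to deduce that
\[
\pi_{2k}(\UFP(1)) \otimes \Q \cong K_1(C^*_u(\Z)) \otimes \Q \cong \Q, \qquad \pi_{2k+1}(\UFP(1)) \otimes \Q \cong K_0(C^*_u(\Z)) \otimes \Q \cong \ell^\infty(\Z,\Z)_S^\vee
\]
for all $k \ge 0$, where $\ell^\infty(\Z,\Z)_S^\vee$ has $\mathcal{B}$ as a $\Q$-basis by definition.

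For step (ii), since $\UFP(1)$ is a topological group, the Milnor--Moore theorem identifies $H_*(\UFP(1); \Q)$ with a free graded-commutative Hopf algebra, so $\UFP(1)$ is rationally equivalent to a weak product of Eilenberg--MacLane spaces, with one $K(\Q, 2k)$ factor and one $K(\ell^\infty(\Z,\Z)_S^\vee, 2k+1)$ factor for each $k \ge 0$. Delooping yields
\[
B\UFP(1) \simeq_\Q \prod_{n \ge 1} K(\Q, 2n-1) \times \prod_{n \ge 1} K\bigl(\ell^\infty(\Z,\Z)_S^\vee, 2n\bigr).
\]
For step (iii), each factor $K(\Q, 2n-1)$ contributes the exterior generator $\beta_n$ in degree $2n-1$, and each factor $K(\ell^\infty(\Z,\Z)_S^\vee, 2n)$ contributes polynomial generators $\alpha_n(b)$ in degree $2n$, one for each $b \in \mathcal{B}$; their graded tensor product is the stated algebra.

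The step I expect to be most delicate is not the cohomology calculation itself but the transition in step (ii) from Bott-periodic homotopy groups to an actual product decomposition of $B\UFP(1)$: one must verify that the relevant rational Postnikov invariants vanish, which is standard for classifying spaces of topological groups but requires care given the inductive limit topology used on $\UFP(1)$ throughout the paper. Once this technical point is dispatched, the rational cohomology computation reduces to the well-known cohomology of Eilenberg--MacLane spaces over $\Q$.
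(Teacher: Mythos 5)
Your overall strategy (Bott periodicity, then a rational splitting of $B\UFP(1)$ into Eilenberg--MacLane spaces, then reading off cohomology) is viable but genuinely different from the paper's route: the paper quotes the \emph{integral} homotopy equivalence $\UFP(1)\simeq\Z\times B\U(\infty)\times\prod_{n\ge1}^\circ K(\ell^\infty(\Z,\Z)_S,2n-1)$ from \cite{KKT1}, deloops it using $B\UFP(1)\simeq\Omega\UFP(1)$ together with the Bott periodicity of $\U(\infty)$ to obtain $B\UFP(1)\simeq\U(\infty)\times\prod_{n\ge1}^\circ K(\ell^\infty(\Z,\Z)_S,2n)$, and only then passes to rational cohomology, so no rational splitting argument is ever needed. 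Your version could be made to work, but as written it contains two concrete errors. The first is a misplaced duality: $\pi_{2k+1}(\UFP(1))\cong K_0(C^*_u(|\Z|))\cong\ell^\infty(\Z,\Z)_S$, \emph{not} $\ell^\infty(\Z,\Z)_S^\vee$, and since $\ell^\infty(\Z,\Z)_S$ is an uncountable-dimensional $\Q$-vector space it is not isomorphic to its dual, so this is not a harmless relabelling. The correct EM factors are $K(\ell^\infty(\Z,\Z)_S,2n)$; the dual and its basis $\mathcal{B}$ enter only through $H^{2n}(K(V,2n);\Q)\cong\mathrm{Hom}(V,\Q)$. In your step (iii) you then index the polynomial generators of $H^*(K(W,2n);\Q)$ by a basis of the coefficient group $W$ rather than of $W^\vee$; with $W=\ell^\infty(\Z,\Z)_S^\vee$ this second mistake exactly cancels the first and lands, by accident, on the stated answer.

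The second problem is the justification of the splitting in step (ii). It is false that the rational Postnikov invariants of $BG$ vanish for a general topological group $G$: every connected space is rationally of the form $BG$ (take $G=\Omega X$), and, say, $S^2\simeq B\Omega S^2$ has a nontrivial rational $k$-invariant. Moreover, a rational splitting of $\UFP(1)$ as a product of EM spaces is only a splitting of spaces (or H-spaces), and such a splitting does not automatically deloop. What actually rescues the argument is Bott periodicity itself: $B\UFP(1)\simeq\Omega\UFP(1)$ is a loop space, hence a connected H-space, and one can split it directly --- though you should note that the classical Hopf/Milnor--Moore structure theorems are usually stated under finite-type hypotheses, which fail here since the homotopy groups are uncountable-dimensional $\Q$-vector spaces, so a version without those hypotheses must be invoked. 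The paper sidesteps all of this by working with the known integral product decomposition.
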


  Then, for a Hilbert bundle with end $E\to B$, we can define characteristic classes
  \[
    \alpha_n(E;a)\quad\text{and}\quad\beta_n(E)
  \]
  which are independent from the choice of a basis $\mathcal{B}$, where $a\in\ell^\infty(\Z,\Z)_S^\vee$. These characteristic classes will be the main tool for investigating Hilbert bundles with ends modeled on $\ell^2(\Z,\C)$. The even dimensional characteristic classes $\alpha_n(E;\alpha)$ are fairly new. The odd dimensional classes $\beta_n(E)$ actually come from the restricted unitary group (see \cite{PS}), and this is a new approach to interpret the restricted unitary group in terms of ends of Hilbert bundles.

  Based on  the above general theory, we will show Hilbert bundles with ends appear in several important contexts in mathematical physics. First, we will consider the \emph{pushforward} of a finite-dimensional vector space along an infinite covering. To begin with, we will consider the fiberwise completion of a direct sum of infinitely
  many finite-dimensional vector bundles, called a \emph{completed sum}, which corresponds to the pushforward along an infinite-trivial covering. We will show that under a mild condition, the completed sum is a Hilbert bundle with end (Theorem \ref{completed sum end}). Then, we will specialize to the case of $\ell^2(\Z,\C)$ and prove:

  \begin{proposition}
    [Proposition \ref{completed sum c_n}]
    Given any $a\in\ell^\infty(\Z,\Z)_S^\vee$ and $n\ge 1$, there is a completed sum $E\to B$, which is an $\ell^2(\Z,\C)$-bundle with end, such that $\alpha_n(E;a)\ne 0$.
  \end{proposition}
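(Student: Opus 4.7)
The plan is to realize each class $\alpha_n(\cdot;a)$ on a completed sum built from tautological line bundles over a countable product of projective spaces, and to reduce the nonvanishing of $\alpha_n(E;a)$ to the nondegeneracy of $a$ via naturality under the inclusion of the diagonal torus into $\UFP(1)$.

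First, I would take $B = \prod_{i \in \Z} \C P^\infty$ and let $\gamma_i \to B$ be the pullback of the tautological line bundle along the projection to the $i$-th factor. The completed sum $E = \widehat{\bigoplus_{i \in \Z}} \gamma_i$ then has fiber $\ell^2(\Z,\C)$ once the $i$-th summand is identified with the basis vector $e_i$. In this natural basis, the transition functions are diagonal, hence have propagation $0$, so Theorem \ref{completed sum end} shows that $E$ is a Hilbert bundle with end.

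Next, I would compute $\alpha_n(E;a)$ by observing that the classifying map $B \to B\UFP(1)$ of $E$ factors through $B\left(\prod_i U(1)\right)$, the classifying space of the diagonal torus. By the construction of the generators $\alpha_n(b)$ in Corollary \ref{cohomology U(1)}, their restriction along this inclusion should equal the weighted power sum $\sum_{i \in \Z} b_i\, \tau_i^n$, where $\tau_i \in H^2(\C P^\infty;\Q)$ is the generator coming from the $i$-th factor; by $\Q$-linearity in the parameter one then obtains
\[
\alpha_n(E;a) \,=\, \sum_{i \in \Z} a(\delta_i)\, c_1(\gamma_i)^n \,\in\, H^{2n}(B;\Q).
\]
Since the monomials $c_1(\gamma_i)^n$ are $\Q$-linearly independent in $H^*(B;\Q)$, we have $\alpha_n(E;a) \ne 0$ as soon as $a$ is nonzero on some $\delta_i$. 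For a general nonzero $a$, I would replace each $\gamma_i$ by $\gamma_i^{\otimes m_i}$ for a bounded integer sequence $(m_i)$; the same computation gives $\alpha_n(E;a) = \sum_i a(\delta_i) m_i^n\, c_1(\gamma_i)^n$, and one picks $(m_i)$ (or more generally a different product of base spaces with chosen line bundles) so that the sequences that can be realized this way suffice to detect any nonzero element of $\ell^\infty(\Z,\Z)_S^\vee$.

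The main obstacle will be the identification of $\alpha_n(b)$ restricted to the diagonal torus with the weighted power sum $\sum_i b_i \tau_i^n$. This requires carefully unwinding the construction of the polynomial generators in Corollary \ref{cohomology U(1)}, which passes through the Bott periodicity of $\UFP(1)$ via the uniform Roe algebra; the naturality of the $\alpha_n$ under the inclusion of the diagonal torus is what connects the abstract generators to concrete Chern classes. Once that step is in hand, the remaining argument is a clean linear-algebra reduction using that $a$ is a nonzero functional on $\ell^\infty(\Z,\Z)_S$.
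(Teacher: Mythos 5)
There is a genuine gap, and it sits at the heart of your construction. The functional $a$ is an element of $\ell^\infty(\Z,\Z)_S^\vee=\mathrm{Hom}(\ell^\infty(\Z,\Z)_S,\Q)$, i.e.\ it factors through the coinvariants of the shift. By Proposition \ref{finite seq}, every finitely supported sequence --- in particular every $\delta_i$ --- represents $0$ in $\ell^\infty(\Z,\Z)_S$, so $a(\delta_i)=0$ for all $i$ and for all $a$. Hence every term of your proposed formula $\alpha_n(E;a)=\sum_i a(\delta_i)\,c_1(\gamma_i)^n$ vanishes, and twisting by $\gamma_i^{\otimes m_i}$ only multiplies each vanishing coefficient by $m_i^n$. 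This is not an accident of the formula: the even classes $\alpha_n(b)$ are pulled back from the $K(\ell^\infty(\Z,\Z)_S,2n)$ factors of $B\UFP(1)$, and the restriction of the classifying map to any single factor $\C P^\infty$ (which only perturbs one diagonal entry, a finite-rank perturbation of the identity) lands in the $\Z\times B\U(\infty)$ part, so it is invisible to $\alpha_n(\cdot;a)$. The classes $\alpha_n(\cdot;a)$ simply cannot be detected summand-by-summand; they see only the asymptotic class of the \emph{whole} family in the coinvariants. (A secondary problem: Theorem \ref{completed sum end} requires a finite open cover trivializing all the bundles simultaneously with equicontinuous transition functions, and no finite open cover of $\C P^\infty$ trivializes the tautological bundle, so your completed sum over $\prod_{i\in\Z}\C P^\infty$ is not covered by that theorem.)

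The paper's route avoids both issues by working over $B=S^{2n}$: for $k\in\Z$ let $E(k)\to S^{2n}$ be the rank-$2n$ bundle classified by $k\in\Z\cong\pi_{2n}(B\U(2n))$, choose $b=(b_i)\in\ell^\infty(\Z,\Z)$ with $\bar a([b])\ne0$ (possible since $\bar a\ne0$ on the coinvariants), and take the completed sum $E(b)$ of $\{E(b_i)\}_{i\in\Z}$. Its classifying map $S^{2n}\to B\UFP(1)$ represents $[b]\in\pi_{2n}(B\UFP(1))\cong\ell^\infty(\Z,\Z)_S$, and since $\alpha_n(\cdot;\bar a)$ is, up to the factor $1/(n-1)!$, the pairing of $\bar a$ with the Hurewicz image, one gets $\alpha_n(E(b);\bar a)\ne0$. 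If you want to salvage your idea, replace the product of projective spaces by a sphere and make the degrees of the summands realize a prescribed class in $\ell^\infty(\Z,\Z)_S$ on which $a$ is nonzero; the detection must happen through the asymptotics of the entire sequence of summands, never through any one of them.
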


  We will also show interesting properties of Hilbert bundles with ends considered in this proposition with respect a vector $a\in\ell^\infty(\Z,\Z)_S^\vee$ defining the even dimensional characteristic class (Proposition \ref{E(a) property}).

  Recall that the pushforward (or the direct image bundle) of a finite-dimensional vector bundle is defined for a finite covering, where finiteness of a covering is demanded if we focus on a class of finite-dimensional vector bundles. We will define the pushforward of a finite-dimensional vector bundles along a countable covering as a natural generalization of the pushforward along a finite covering, so that a completed sum is the pushforward along a trivial covering. We will show that under a mild condition, the pushforward along a countable covering is a Hilbert bundle with end (Theorem \ref{pushforward end}). In particular, the pushforward along an infinite cyclic covering  turns out to be a Hilbert bundle with end modeled on $\ell^2(\Z,\C)$, under a mild condition. Then we  specialize to the case of
  an infinite cyclic coverings, and show properties of pushforwards, analogous to the classical case, in terms of characteristic classes. Furthermore, we will construct the pushforward with a non-trivial 1-dimensional characteristic class $\beta_1$.

  \begin{proposition}
    [Corollary \ref{beta_1}]
    Let $\pi\colon X\to Y$ be a normal infinite cyclic covering satisfying the conditions pushforward end in Theorem \ref{pushforward end}. Then the pushforward of the trivial bundle $X\times\C$ along $\pi$ is a Hilbert bundle with end such that $\beta_1\ne0$.
  \end{proposition}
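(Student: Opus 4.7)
The first claim, that $E = \pi_*(X\times\C)$ is a Hilbert bundle with end modelled on $\ell^2(\Z,\C)$, will follow directly from Theorem \ref{pushforward end}, whose hypotheses are assumed. The substantive task is to prove $\beta_1(E)\ne 0$.

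The plan is to identify $E$ explicitly as the Hilbert bundle associated to the principal $\Z$-bundle $\pi\colon X\to Y$ via the shift representation. Fixing compatible identifications of each fibre $\pi^{-1}(y)$ with $\Z$ realises $E\cong X\times_\Z \ell^2(\Z,\C)$, where $\Z$ acts on $\ell^2(\Z,\C)$ by the shift operator $S$. Since $S$ has propagation $1$, the homomorphism $\iota\colon\Z\to\UFP(1)$ sending $1\mapsto S$ is well-defined and continuous, and the classifying map of $E$ factors as
\[
  Y \xrightarrow{\ g\ } B\Z\simeq S^1 \xrightarrow{\ B\iota\ } B\UFP(1),
\]
where $g$ classifies the normal cyclic covering $\pi$.

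The heart of the argument is to check $(B\iota)^*\beta_1\ne 0$ in $H^1(S^1;\Q)\cong\Q$. For a topological group $G$ one has $H^1(BG;\Q)\cong\mathrm{Hom}(\pi_0 G,\Q)$, so this reduces to verifying that $\beta_1$ is nontrivial on the $\pi_0$-class of $S$. By \cite{KKT1}, $\pi_0(\UFP(1))\cong\Z$ is generated by the shift, and under the identification of $\beta_1$ as the degree-$1$ class imported from the restricted unitary group (noted in the introduction and made precise through the Bott-periodicity computation of $B\UFP(1)$ announced there), this generator will pair nontrivially with $\beta_1$ because the shift has Fredholm index $\pm 1$ in the restricted unitary group. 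Hence $(B\iota)^*\beta_1$ is a nonzero multiple of the generator of $H^1(S^1;\Q)$.

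Finally, normality of the infinite cyclic covering forces the monodromy $\pi_1(Y)\twoheadrightarrow\Z$ to be surjective, so $g^*\colon H^1(S^1;\Q)\to H^1(Y;\Q)$ is injective, and combining gives $\beta_1(E)=g^*(B\iota)^*\beta_1\ne 0$. The main obstacle will be the third step: concretely identifying the abstractly defined $\beta_1$ as dual to the $\pi_0$-generator of $\UFP(1)$. The factorisation through $B\Z$ and the normality argument are formal, but tracing $\beta_1$ through the Bott periodicity of $\UFP(1)$ back to the Fredholm-index class on the restricted unitary group, and checking that along this route the shift detects $\beta_1$, will require care.
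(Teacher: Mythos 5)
Your proposal is correct and follows essentially the same route as the paper: the paper also reduces to the universal cover $\R\to S^1$ (equivalently, factoring the classifying map through $S^1\simeq B\Z$), identifies the resulting transition function as the shift operator $S$, invokes the fact that $S$ generates $\pi_0(\UFP(1))\cong\Z$ so that $\beta_1$ detects it, and uses normality to conclude that the map $Y\to S^1$ is injective on $H^1(-;\Q)$. The step you flag as delicate --- that $\beta_1$ pairs nontrivially with $[S]$ --- is handled in the paper exactly as you anticipate, via the identification of $\beta_1$ with the class coming from $\Ures$ and Proposition \ref{pi_0}.
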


  We have not found a pushforward with non-trivial $\beta_n$ for $n\ge 2$. It would be interesting to find such a pushforward.


  Next, we will consider the spectral decomposition of a differential operator. We will construct Hilbert bundles with ends from two sorts of spectral decompositions, the Fourier transform and the harmonic oscillators. As in the above observation on $L^2(S^1)$, the spectral decomposition given by the Fourier transform provides an orthonormal basis. Using the orthonormal basis, we will construct a natural family of orthonormal basis parametrized by $S^1$, and we will get:

  \begin{theorem}
    [Theorem \ref{Fourier transform bundle}]
   The  Fourier transform on $S^1$ yields a Hilbert bundle with end $q\colon\mathcal{L}^1\to S^1$ modeled on $\ell^2(\Z,\C)$ such that $\beta_1(\mathcal{L}^1)\ne 0$.
  \end{theorem}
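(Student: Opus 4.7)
The plan is to realize $\mathcal{L}^1$ as the mapping torus of the Fourier-basis shift on $L^2(S^1)$, verify the propagation bound on the transition functions, and then exploit the Bott-periodicity identification of $\beta_1$ with a Fredholm-index class established earlier in the paper.

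First, I would build $\mathcal{L}^1$ explicitly. The basis $\{z^n\}_{n\in\Z}$ of $L^2(S^1)$ provides the Fourier identification $\mathcal{F}\colon L^2(S^1)\xrightarrow{\sim}\ell^2(\Z,\C)$. Under $\mathcal{F}$, multiplication by the coordinate function $z$ corresponds to the bilateral shift $S\colon e_n\mapsto e_{n+1}$, which has propagation $1$ and hence lies in $\UFP(1)$. A geometric packaging is to take, for each $w=e^{i\alpha}\in S^1$, the Hilbert space $L^2(S^1;L_w)$ of square-integrable sections of the flat Hermitian line bundle $L_w$ of holonomy $w$, realized as functions $f\colon\R\to\C$ satisfying $f(x+2\pi)=wf(x)$. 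Endow it with the end structure determined by the Laplacian's eigenbasis $\psi_n^w(x)=e^{i(n+\alpha/(2\pi))x}/\sqrt{2\pi}$, indexed by $n\in\Z$. As $\alpha$ winds once around $[0,2\pi]$, this basis continuously deforms and returns to itself with indices shifted by $1$. Accordingly, the total space fits into a fibration $q\colon\mathcal{L}^1\to S^1$ naturally isomorphic to the mapping torus $(\R\times\ell^2(\Z,\C))/\Z$, where $\Z$ acts by translation on $\R$ and by $S$ on the fiber.

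Next, I would verify the structure of Hilbert bundle with end. Covering $S^1$ by two contractible open arcs $U_0,U_1$, the restrictions $q^{-1}(U_i)$ are trivialized using the continuous family $\{\psi_n^w\}$, and the resulting transition functions on the two connected components of $U_0\cap U_1$ are the identity and the shift $S$, both in $\UFP(1)$. Hence the structure group reduces to $\UFP(1)$ as required, and $\mathcal{L}^1$ is a Hilbert bundle with end modeled on $\ell^2(\Z,\C)$.

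For the non-vanishing of $\beta_1$, the bundle $\mathcal{L}^1$ is classified by a map $f\colon S^1\to B\UFP(1)$ sending a generator of $\pi_1(S^1)$ to $[S]\in\pi_0(\UFP(1))=\pi_1(B\UFP(1))$. Under the Bott-periodicity identification recalled in the paper, the class $\beta_1\in H^1(B\UFP(1);\Q)$ pulls back from the canonical generator on $B\Ures$, whose evaluation on $\pi_0(\Ures)$ is the Fredholm-index homomorphism. Letting $P$ be the projection onto $\{e_n\mid n\ge 0\}$, the commutator $[P,S]$ has rank $1$, so $S$ lies in $\Ures$ and its Fredholm-index class is $\pm 1$. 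Therefore $\beta_1([S])\ne 0$, and pulling back by $f$ yields $\beta_1(\mathcal{L}^1)=f^*\beta_1\ne 0$ in $H^1(S^1;\Q)$. The main obstacle is this last step: translating the abstract cohomology class $\beta_1$ into a concrete Fredholm-index computation for the shift, which rests essentially on the Bott-periodicity theorem for $\UFP(1)$ and its comparison with $\Ures$ developed earlier in the paper.
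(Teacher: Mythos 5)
Your proposal is correct and follows essentially the same route as the paper: the same family of flat line bundles $L_w$ over $S^1$, the same two-chart trivialization with transition functions $1$ and the shift $S$, and the same identification of the classifying map with the generator $[S]\in\pi_0(\UFP(1))\cong\pi_1(B\UFP(1))$. The only (harmless) difference is at the last step, where the paper simply invokes Proposition \ref{pi_0} (that $S$ generates $\pi_0(\UFP(1))\cong\Z$) together with the fact that $\beta_1$ is dual to the Hurewicz image of $[S]$, while you re-derive the nontriviality of $[S]$ via the Fredholm index in $\Ures$ and Proposition \ref{inclusion Ures}; both justifications are valid.
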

We will discuss higher dimensional analogue of this construction.

We consider  $2$-dimensional harmonic oscillators. We begin with analyzing a hidden parameter of the
$2$-dimensional harmonic oscillators such that the parameter space is homeomorphic with the disjoint union of $S^2$ and two points. Then we will get a family spectral decomopsitions of the $2$-dimensional harmonic oscillators parametrized over $S^2$. We prove that
this family yields a Hilbert bundle with end.

  \begin{theorem}
    [Theorem \ref{2-dim}]
    The spectral decomposition of a family of $2$-dimensional harmonic oscillators yields a Hilbert bundle with end $\mathcal{E}\to S^2$ modeled on $\ell^2(\Z,\C)$, such that $\alpha_1(\mathcal{E};a)\ne 0$ for certain $a\in\ell^\infty(\Z,\Z)_S^\vee$.
  \end{theorem}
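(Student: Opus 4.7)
The plan is to construct $\mathcal{E}\to S^2$ explicitly from the family of spectral decompositions, verify the end structure, and compute $\alpha_1$ through a reduction to finite-rank subbundles. First, using the identification of the hidden parameter space as $S^2\sqcup\{p_+,p_-\}$ established immediately before the theorem, I would restrict to the $S^2$ component and write down the corresponding family of $2$-dimensional harmonic oscillator operators $\{H_\xi\}_{\xi\in S^2}$ together with their joint spectral decompositions. For each $\xi$, the eigenbasis of $H_\xi$ (together with an auxiliary commuting observable that breaks the degeneracies in a $\xi$-dependent way) yields an orthonormal basis of $L^2(\R^2)$; fixing a bijection of the indexing set with $\Z$ identifies $L^2(\R^2)$ with $\ell^2(\Z,\C)$ as a Hilbert space with end, producing the fibers of $\mathcal{E}$.

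Next I would verify that this family actually defines a Hilbert bundle with end. Because energy is a spectral invariant, the transition functions preserve the decomposition of $\ell^2(\Z,\C)$ by energy eigenspaces and are block-diagonal in the natural level-indexing. Continuity in the inductive-limit topology on $\UFP(\ell^2(\Z,\C))$ follows from the analytic dependence of the spectral projections on $\xi$, combined with a choice of bijection $\N^2\to\Z$ that makes the within-level mixings satisfy a uniform propagation bound across $S^2$. This uses that the family of bases factors through the finite-dimensional family of unitaries arising from the hidden-parameter description, rather than arbitrary unitaries on each energy eigenspace.

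For computing $\alpha_1(\mathcal{E};a)$, I would decompose $\mathcal{E}$ as a completed sum of finite-rank vector bundles $\mathcal{E}_n\to S^2$, where $\mathcal{E}_n$ has rank $n+1$ and is formed by the $n$-th energy eigenspaces of the family $\{H_\xi\}$. By the $\mathrm{SU}(2)$ dynamical symmetry of the $2$-dimensional harmonic oscillator and the identification $S^2\cong\mathrm{SU}(2)/\U(1)=\C P^1$, each $\mathcal{E}_n$ is the associated bundle of the irreducible $\mathrm{SU}(2)$-representation of dimension $n+1$, and hence further splits as a direct sum of line bundles $\mathcal{O}(k)$ on $\C P^1$ for $k\in\{-n,-n+2,\ldots,n-2,n\}$. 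Applying Proposition \ref{completed sum c_n} to express $\alpha_1(\mathcal{E};a)$ as a linear combination of first Chern classes of these line bundles, I would choose $a\in\ell^\infty(\Z,\Z)_S^\vee$ supported on an index whose associated line bundle is $\mathcal{O}(k)$ with $k\ne 0$. Since $c_1(\mathcal{O}(k))=k\cdot\omega\ne 0$ in $H^2(S^2;\Q)$ for the canonical generator $\omega$, this yields $\alpha_1(\mathcal{E};a)\ne 0$.

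The hardest step will be the uniform finite-propagation verification over $S^2$. A naive level-ordering bijection $\N^2\to\Z$ placing the $(n+1)$-dimensional $n$-th energy eigenspace at consecutive integers yields within-block mixings of propagation growing with $n$, since the irreducible $\mathrm{SU}(2)$-representation on each energy level has full propagation in any basis of a single block. Overcoming this requires that the $\xi$-dependent family of basis changes avoids realizing the full $\mathrm{SU}(2)$ action on each energy eigenspace; the restricted nature of the family coming from the hidden-parameter construction must be exploited together with a carefully interleaved bijection $\N^2\to\Z$. Pinning down this compatibility between the indexing and the family of basis changes is the technical core of the argument.
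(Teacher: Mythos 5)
Your proposal is built on a different family than the paper's, and the two places you identify as needing work are exactly where it breaks down. The paper does not use the scalar $2$-dimensional oscillator with its $\mathrm{SU}(2)$ dynamical symmetry and energy levels of multiplicity $n+1$; it uses a two-component oscillator on $L^2(\R^2,\C^2)$ whose hidden parameter is the Hermitian unitary $U\in\UH(2)$ appearing in the creation operator $A_U^*=-\frac{d}{dx}+Ux$. In that model each fiber decomposes into a $\Z$-indexed family of \emph{one-dimensional} eigenspaces $\psi_k^{i,U}$ ($k\ge0$, $i=1,2$), the intertwiners $\Phi^\pm(U)$ trivialize the family over the two hemispheres of $S^2$, and the transition function on the equator is the diagonal operator $\diag(\ldots,z,z,z^{-1},z^{-1},\ldots)$ --- manifestly of propagation zero, so there is nothing to verify about finite propagation. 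In your picture, by contrast, the level-$n$ block splits as $\bigoplus_k\mathcal{O}(k)$ with $k\in\{-n,\ldots,n\}$, so any trivialization produces transition functions containing diagonal entries $z^{k}$ with $|k|$ unbounded. This is precisely the situation of the paper's Hopf-bundle example at the start of Section \ref{Pushforward}: such a family of transition functions is not equicontinuous, the operator is not even norm-continuous in $z$, and the hypotheses of Theorem \ref{completed sum end} fail. The introduction explicitly flags unbounded multiplicities as the case that ``cannot be straightforwardly treated,'' so the ``technical core'' you defer is not a finishable gap along your route --- it is the obstruction the paper's two-component construction is designed to avoid.

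There is a second, independent problem in your computation of $\alpha_1$. You propose to choose $a\in\ell^\infty(\Z,\Z)_S^\vee$ ``supported on an index whose associated line bundle is $\mathcal{O}(k)$ with $k\ne0$.'' But $a$ is a functional on the coinvariants $\ell^\infty(\Z,\Z)_S$, and by Proposition \ref{finite seq} every finitely supported sequence is zero there; no functional can see a single index. The class of the transition function in $\pi_1(\UFP(1))\cong\ell^\infty(\Z,\Z)_S$ is the class of the whole $\Z$-indexed sequence of Chern numbers, and nontriviality must be detected asymptotically. The paper's sequence is $(\ldots,1,1,-1,-1,\ldots)$, which is bounded and is shown to be nonzero in the coinvariants because any $b$ with $(1-S)b=(\ldots,1,1,-1,-1,\ldots)$ would be unbounded; $a$ is then taken to be a functional dual to this class. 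To repair your argument you would need both to replace the scalar oscillator by a family whose eigenspaces have uniformly bounded multiplicity (or otherwise produce a bounded, non-coboundary sequence of Chern numbers) and to phrase the nonvanishing of $\alpha_1$ in terms of the image of that sequence in $\ell^\infty(\Z,\Z)_S$ rather than a single entry.
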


  Our construction of Hilbert bundles with ends from the spectral decompositions of differential operators grew out of an attempt to find a new invariant of moduli spaces attached a Hilbert space at each point, which come out of certain differential operators. A new aspect of our construction is that the corresponding Hilbert bundles with ends reflect the behavior of eigenspaces with high eigenvalues. In fact, Fredholm theory concerns spectral behaviors only near zero. Hence, we can say that our construction will lighten up a new features of spectral decompositions, which  is a kind of spectral topology. There is a technical issue to try to treat a general self-adjoint elliptic operator on a compact manifold, since multiplicities of eigenvalues may be unbounded, in which case it cannot be straightforwardly treated within the class of unitary operators of finite propagation. Then the general self-adjoint elliptic operator case needs more elaborations.


  \subsection{Organization}

  We explain the organization of the paper. Section \ref{Unitary operators of finite propagation} collects previous results and proves properties of unitary operators of finite propagation. We will especially consider the topological group of unitary operators of finite propagation on $\ell^2(\Z,\C)$, which was studied in \cite{KKT1,KKT2}. We will prove its Bott periodicity and determine the rational cohomology of the classifying space. This enables us to define characteristic classes for $\ell^2(\Z,\C)$-bundles with ends. Section \ref{The abelian group} shows some properties of the coinvariant group of $\ell^\infty(\Z,\Z)$ under the action of the shift operator. This abelian group appears in the homotopy groups of $\UFP(1)$, and its properties help understanding features of $\ell^2(\Z,\C)$-bundles with ends. Section \ref{Pushforward} studies pushforwards of vector bundles along infinite coverings. We will start with the trivial bundle case, and then pass to the general case. We will prove the pushforward of a vector bundle along a countable covering is a Hilbert bundle with end under a mild condition, and show its basic properties including characteristic classes. Section \ref{Fourier transform} constructs a Hilbert bundle with end by using the Fourier transform. The resulting bundle will be shown to have a non-trivial 1-dimensional characteristic class. We also discuss a higher dimensional generalization. Section \ref{Harmonic oscillator} constructs a Hilbert bundle with end from a 2-dimensional harmonic oscillator. The construction is based on the spectral decomposition as well as the Fourier transform. The resulting bundle will be shown to have a non-trivial 2-dimensional characteristic class.

  \subsection*{Acknoledgement}

  TK was supported by JSPS KAKENHI 17K18725 and 17H06461, DK was supported by JSPS KAKENHI 17K05248 and 19K03473, and MT was supported by JSPS KAKENHI 19K14535.


  \section{Unitary operators of finite propagation}\label{Unitary operators of finite propagation}

  This section defines precisely the topological group of unitary operators of finite propagation. Then we recall from \cite{KKT1} results on the topological group of unitary operators of finite propagation on $\ell^2(\Z,\C)$, denoted by $\UFP(1)$, and its subgroup of periodic operators. We also recall from \cite{KKT2} results on the completed version of $\UFP(1)$, and prove $\UFP(1)$ and its completed version are of the same homotopy type, which is implicit in \cite{KKT2}. This enables us to prove the Bott periodicity of $\UFP(1)$ and determine the rational cohomology of the classifying space $B\UFP(1)$. Then we will define characteristic classes of $\ell^2(\Z,\C)$-bundles with ends.


  \subsection{Topology}

  Let $H$ be a Hilbert space equipped with a specific basis indexed by a countable metric space $I$. The \emph{propagation} of a unitary operator $U$ on $H$ is defined by
  \[
    \mathrm{prop}(U)=\sup\{d(i,j)\mid U_{ij}\ne 0\}
  \]
  where the matrix description $U=\{U_{ij}\}_{i,j\in I}$ is defined by the given basis of $H$. Let $\UFP_L(H)$ be the set of all unitary operators $U$ with $\mathrm{prop}(U)\le L$. Then
  \begin{equation}
    \label{UFP colim}
    \UFP(H)=\lim_{L\to\infty}\UFP_L(H)
  \end{equation}
  as a set. By definition, we have $\mathrm{prop}(UV)\le\mathrm{prop}(U)+\mathrm{prop}(V)$, so that $\UFP_L(H)$ need not be closed under product. However, $\UFP(H)$ is closed under product.

  We will equip $\UFP_L(H)$ the topology given by operator norm. Then we will equip $\UFP(H)$ with the inductive limit topology by \eqref{UFP colim}. Namely, $C\subset\UFP(H)$ is closed if and only if $C\cap \UFP_L(H)$ is closed in $\UFP_L(H)$ for each $L$. Notice that the inductive limit topology of $\UFP(H)$ may be different from the operator norm topology. There are two reasons for this choice of topology of $\UFP(H)$. One reason is that it suites to study homotopical properties, and the other reason is that it is compatible with taking the uniform Roe algebra, as is seen below.


  \subsection{Homotopy type}

  Let $\ell^2(\Z^n,\C)$ denote the Hilbert space of square summable functions from $\Z^n$ to $\C$. For $\mathbf{k}\in\Z^n$, let $e(\mathbf{k})\in\ell^2(\Z^n,\C)$ be a function sending $\mathbf{k}$ to 1 and other elements of $\Z^n$ to 0. Then $e(\mathbf{k})$ form an orthonormal basis of $\ell^2(\Z^n,\C)$ indexed by $\Z^n$, where $\Z^n$ is obviously a countable metric space. We will always consider this orthonormal basis for $\ell^2(\Z^n,\C)$, so that we will consider unitary operators of finite propagation on $\ell^2(\Z^n,\C)$ with respect to this orthonormal basis. We will use an abbreviated notation
  \[
    \UFP(n)=\UFP(\ell^2(\Z^n,\C)).
  \]
  We recall properties of $\UFP(1)$ proved in \cite{KKT1}.

  In \cite{KKT1}, the authors determined the homotopy type of $\UFP(1)$. To state the result, we set notation. Let $\ell^\infty(\Z,\Z)$ denote the abelian group of bounded integer sequences indexed by $\Z$. Define
  \[
    S\colon\ell^\infty(\Z,\Z)\to\ell^\infty(\Z,\Z),\quad(a_i)\mapsto(b_i),\quad\text{where}\quad b_i=a_{i+1}
  \]
  which we call the \emph{shift} operator. Let $\ell^\infty(\Z,\Z)_S$ denote the abelian group of coinvariants of $S$, that is,
  \[
    \ell^\infty(\Z,\Z)_S=\ell^\infty(\Z,\Z)/\{(1-S)(a)\mid a\in\ell^\infty(\Z,\Z)\}.
  \]
  The abelian group $\ell^\infty(\Z,\Z)_S$ is important to describe the homotopy type of $\UFP(1)$ and to examine Hilbert bundles with ends as below. Here, we give the most important property of $\ell^\infty(\Z,\Z)_S$ proved in \cite[Proposition 5.3]{KKT1}, and other properties will be discussed in Section \ref{The abelian group}.

  \begin{proposition}
    \label{Q-vector space}
    The abelian group $\ell^\infty(\Z,\Z)_S$ is a $\Q$-vector space.
  \end{proposition}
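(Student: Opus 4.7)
The plan is to prove this by showing that $\ell^\infty(\Z,\Z)_S$ is both divisible and torsion-free as an abelian group; since any divisible torsion-free abelian group admits a unique $\Q$-module structure, this will suffice. Throughout I will use the fact that an element $a=(a_i)\in\ell^\infty(\Z,\Z)$ lies in $(1-S)\ell^\infty(\Z,\Z)$ if and only if its telescoping partial sums are bounded: writing $(1-S)c=(c_i-c_{i+1})_i$, the condition becomes $\sum_{j=M}^{N-1}a_j=c_M-c_N$, uniformly bounded in $M,N$.

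For the torsion-free part, I would start from a relation $na=(1-S)c$ with $c\in\ell^\infty(\Z,\Z)$ and $n\ge 1$. Componentwise this reads $na_i=c_i-c_{i+1}$, so consecutive entries of $c$ agree modulo $n$, and by induction all entries of $c$ lie in a single residue class modulo $n$. Then $d_i:=(c_i-c_0)/n$ defines a bounded integer sequence with $(1-S)d=a$, establishing that $a$ is already zero in $\ell^\infty(\Z,\Z)_S$. This part is essentially formal.

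The divisibility step is where the actual content lies. Given $a\in\ell^\infty(\Z,\Z)$ and $n\ge 1$, I would fix $A_0=0$ and define $A_i\in\Z$ by the recursion $A_{i+1}-A_i=a_i$ for all $i\in\Z$ (so $A_i=\sum_{j=0}^{i-1}a_j$ for $i>0$, and analogously for $i<0$). Then I write $A_i=nQ_i+R_i$ with remainder $R_i\in\{0,1,\ldots,n-1\}$, and set $b_i:=Q_{i+1}-Q_i$. A short calculation gives
\[
  a_i-nb_i=(A_{i+1}-A_i)-n(Q_{i+1}-Q_i)=R_{i+1}-R_i=((1-S)(-R))_i,
\]
and the identity $b_i=(a_i+R_i-R_{i+1})/n$ together with boundedness of $a$ and of $R\in\{0,\dots,n-1\}^\Z$ shows $b\in\ell^\infty(\Z,\Z)$. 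Hence $a\equiv nb$ in the coinvariant group, which is exactly divisibility by $n$.

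The main conceptual obstacle is simply recognizing the right characterization of $(1-S)\ell^\infty(\Z,\Z)$ via bounded telescoping sums and choosing the base point $A_0=0$ so that the bi-infinite partial sums are well defined without any convergence issues; once this is in place both steps are nearly formal, and the only nontrivial verification is the boundedness of the $b_i$ produced in the divisibility step, which is immediate from the uniform bound $|R_i|\le n-1$.
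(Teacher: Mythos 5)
Your proof is correct and self-contained: the divisible-plus-torsion-free criterion is the right framework, the remainder trick $A_i=nQ_i+R_i$ with $b=(Q_{i+1}-Q_i)$ gives $a-nb=(1-S)(-R)$ with $-R$ bounded, and the torsion-free step via $d_i=(c_i-c_0)/n$ is airtight. The paper itself gives no proof of this proposition, deferring to \cite[Proposition 5.3]{KKT1}, and your argument is essentially the standard one used there (partial sums and remainders), so there is nothing to flag.
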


  Let $\U(n)$ denote the group of unitary matrices of rank $n$, and let $\U(\infty)=\lim_{n\to\infty}\U(n)$. For pointed spaces $\{X_n\}_{n\ge 1}$, we define their weak product by
  \[
    \overset{\circ}{\prod_{n\ge 1}}X_n=\lim_{n\to\infty}\prod_{k=1}^nX_k.
  \]
  Now we state the result on the homotopy type of $\UFP(1)$ proved in \cite[Theorem 1.2]{KKT1}.

  \begin{theorem}
    \label{homotopy type UFP}
    There is a homotopy equivalence
    \[
      \UFP(1)\simeq\Z\times B\U(\infty)\times\overset{\circ}{\prod_{n\ge 1}}K(\ell^\infty(\Z,\Z)_S,2n-1).
    \]

  \end{theorem}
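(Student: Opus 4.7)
The plan is to filter $\UFP(1)$ by propagation: $\UFP(1) = \mathrm{colim}_L\,\UFP_L(1)$. I would begin with the base case $L=0$, which is the space of diagonal unitaries on $\ell^2(\Z,\C)$, homeomorphic to $\prod_{n\in\Z}S^1$ with the box topology induced by the operator norm. Equicontinuity of any path forces uniformly bounded winding numbers, so $\pi_1(\UFP_0(1)) = \ell^\infty(\Z,\Z)$ and higher homotopy groups vanish. This is the geometric source of the coefficient group in the theorem.

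Next, I would analyze how the homotopy type evolves as $L$ grows by means of Bloch decomposition: a unitary $U\in\UFP_L(1)$ commuting with $S^N$ for $N>2L$ corresponds, via Fourier transform with respect to the $\Z/N$-action on $\ell^2(\Z,\C)\cong\bigoplus_{k=0}^{N-1}\ell^2(k+N\Z,\C)$, to a continuous map $\hat U\colon S^1\to\U(N)$ whose matrix entries are Laurent polynomials of bounded degree. A cut-and-paste argument should show that the inclusion of periodic operators into $\UFP_L(1)$ is a weak equivalence, reducing $\UFP_L(1)$ to a subspace of $\mathrm{Map}(S^1,\U(N))$ that becomes dense as $L$ grows. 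Combining the splitting $\mathrm{Map}(S^1,\U(N))\simeq\U(N)\times\Omega\U(N)$ via evaluation at a basepoint with Bott periodicity $\Omega\U(\infty)\simeq\Z\times B\U(\infty)$ yields the even-degree factors $\Z\times B\U(\infty)$; the $\Z$-factor of $\pi_0$ is a Fredholm-type index of the band unitary.

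The odd-degree factors $K(\ell^\infty(\Z,\Z)_S,2n-1)$ arise by assembling the diagonal contribution from $\UFP_0(1)$ through the colimit in $L$. As $L$ increases, conjugation by band unitaries of growing propagation effectively shifts the $\ell^\infty(\Z,\Z)$-indexed winding data, so the induced direct limit on the relevant homotopy groups is the coinvariant group $\ell^\infty(\Z,\Z)_S$, appearing in each odd degree via iterated Bott suspensions of the $L=0$ construction. The weak-product form $\overset{\circ}{\prod}$ reflects that only finitely many such Eilenberg--MacLane factors appear at each finite stage of the filtration.

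The main obstacle is upgrading this homotopy-group computation to an actual product decomposition. Because $\ell^\infty(\Z,\Z)_S$ is a $\Q$-vector space by Proposition \ref{Q-vector space}, all extension problems in the rational Postnikov tower of $\UFP(1)$ split automatically, so the rational decomposition is immediate. The integral splitting requires explicit retractions $\UFP(1)\to K(\ell^\infty(\Z,\Z)_S,2n-1)$ for each $n$, which I would attempt to construct by extracting secondary characteristic invariants from the Bloch data --- pairing the loop $\hat U\colon S^1\to\U(N)$ against higher transgressions of universal Chern classes --- and then verifying that these invariants detect the computed homotopy groups, together with a Whitehead-type argument assembling all the splitting maps into a global homotopy equivalence.
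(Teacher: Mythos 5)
First, note that the paper does not prove Theorem \ref{homotopy type UFP} at all: it is quoted from \cite[Theorem 1.2]{KKT1}. The only trace of the underlying argument visible here is in the proofs of Lemmas \ref{inclusion odd} and \ref{inclusion even}, where the mechanism producing $\ell^\infty(\Z,\Z)_S$ in odd degrees is a block-diagonal embedding $\phi\colon\U_L\to\UFP(1)$ of the group of $L\times L$ unitary matrices with entries in $\ell^\infty(\Z,\C)$, inducing the quotient map $\ell^\infty(\Z,\Z)\to\ell^\infty(\Z,\Z)_S$ on $\pi_{2i-1}$, and the even-degree factor $\Z\times B\U(\infty)$ is split off by comparison with $\Ures/G$. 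Your starting point (the diagonal subgroup $\ell^\infty(\Z,S^1)$ with $\pi_1\cong\ell^\infty(\Z,\Z)$ and vanishing higher homotopy) is consistent with this and is the right geometric source of the coefficient group.

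However, your second step contains a genuine error that the paper itself refutes. You claim that the inclusion of periodic (Bloch-decomposable) operators into $\UFP_L(1)$ is a weak equivalence and that the whole space can be recovered from loops of band unitaries $\hat U\colon S^1\to\U(N)$. By Theorem \ref{homotopy type periodic} and Corollary \ref{inclusion periodic}, the subgroup $\widehat{\UFP}$ of periodic operators has odd homotopy groups $\Q$, and its inclusion into $\UFP(1)$ realizes only the one-dimensional subspace $\iota(\Q)\subset\ell^\infty(\Z,\Z)_S$ (the classes of periodic sequences, detected by the Ces\`aro average). Since $\ell^\infty(\Z,\Z)_S$ is a $\Q$-vector space of uncountable dimension, no density or cut-and-paste argument within the periodic world can produce it: the Bloch picture collapses the winding data $(a_i)\in\ell^\infty(\Z,\Z)$ to its average and loses everything else. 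A second, smaller gap: your appeal to rational Postnikov splitting does not apply to the even homotopy groups, which are $\Z$, so the factor $\Z\times B\U(\infty)$ cannot be split off that way; one needs an explicit map such as $\UFP(1)\to\Ures/G$ as in the diagram in the proof of Lemma \ref{inclusion even}. To repair the argument you would have to replace the periodic reduction by the non-periodic block-diagonal embeddings $\U_L\to\UFP(1)$ (matrices over the full algebra $\ell^\infty(\Z,\C)$), which is precisely what \cite{KKT1} does.
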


  \begin{corollary}
    \label{homotopy group UFP}
    There is an isomorphism
    \[
      \pi_*(\UFP(1))\cong\begin{cases}\Z&*\text{ is even}\\\ell^\infty(\Z,\Z)_S&*\text{ is odd.}\end{cases}
    \]
  \end{corollary}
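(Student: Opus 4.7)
The plan is to derive this corollary directly from Theorem \ref{homotopy type UFP}, which exhibits $\UFP(1)$ as a product (in the weak sense) of three well-understood factors. Since homotopy groups respect products, I would compute $\pi_*$ of each factor separately and sum them.

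First I would handle $\pi_*(\Z)$, which contributes only $\Z$ in degree $0$ and zero elsewhere. Next, for $\pi_*(B\U(\infty))$ I would invoke Bott periodicity: since $\pi_k(\U(\infty)) = \Z$ for $k$ odd and $0$ for $k$ even, the shift gives $\pi_k(B\U(\infty)) = \Z$ for $k$ even and positive, and $0$ in all other degrees. In particular $\pi_0(B\U(\infty)) = 0$ because $B\U(\infty)$ is connected. Finally, for the weak product
\[
  \overset{\circ}{\prod_{n\ge 1}}K(\ell^\infty(\Z,\Z)_S,2n-1),
\]
each factor $K(\ell^\infty(\Z,\Z)_S, 2n-1)$ contributes $\ell^\infty(\Z,\Z)_S$ in degree $2n-1$ and nothing elsewhere, so the only odd degree contributions occur, and each odd degree $2n-1 \ge 1$ receives exactly one copy of $\ell^\infty(\Z,\Z)_S$.

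Assembling these three computations yields the stated formula: in even degrees one gets $\Z$ (either from $\Z$ itself in degree $0$ or from $B\U(\infty)$ in positive even degrees), and in odd degrees one gets $\ell^\infty(\Z,\Z)_S$ from the weak product factor.

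The only point that is not entirely automatic is the identification of $\pi_*$ of the weak product with the direct sum of the $\pi_*$ of the factors. This holds because any continuous map from a compact space (such as a sphere or a sphere together with a nullhomotopy cylinder) into a weak product of pointed CW complexes factors through a finite subproduct, so $\pi_k$ of the weak product is the colimit over finite subproducts of $\pi_k$ of the product, which is the direct sum $\bigoplus_n \pi_k(K(\ell^\infty(\Z,\Z)_S,2n-1))$. Since at most one summand is nonzero in any given degree, this direct sum collapses to a single copy of $\ell^\infty(\Z,\Z)_S$ in each odd degree, completing the proof. I do not expect any substantive obstacle here; the content of the corollary is entirely packaged inside Theorem \ref{homotopy type UFP}.
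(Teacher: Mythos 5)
Your proposal is correct and is exactly the argument the paper intends: the corollary is stated without proof as an immediate consequence of Theorem \ref{homotopy type UFP}, obtained by reading off the homotopy groups of each factor ($\Z$, $B\U(\infty)$ via Bott periodicity, and the weak product of Eilenberg--MacLane spaces). Your remark that a map from a compact space into the weak product factors through a finite subproduct correctly justifies the only step that is not completely formal.
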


  We explain an interpretation of the homotopy groups of $\UFP(1)$. A generator of $\UFP(1)$, or more general groups of unitary operators of finite propagation, is given in \cite{GNVW}.

  \begin{proposition}
    \label{pi_0}
    A generator of $\pi_0(\UFP(1))\cong\Z$ is represented by the shift operator $S$.
  \end{proposition}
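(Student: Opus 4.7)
The plan is to define an explicit integer-valued invariant $\mathrm{ind}$ on $\UFP(1)$, show that it is constant on path components and that $\mathrm{ind}(S)=\pm 1$, and then invoke the isomorphism $\pi_0(\UFP(1))\cong\Z$ from Corollary \ref{homotopy group UFP} to conclude that $[S]$ must be a generator. The invariant is the Fredholm index of a half-space compression; this is the most basic incarnation of the flow-type index of \cite{GNVW}.

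Concretely, let $P$ be the orthogonal projection of $\ell^2(\Z,\C)$ onto the closure of the span of $\{e(k)\mid k\ge 0\}$. For $U\in\UFP_L(1)$ a direct calculation gives $[P,U]_{ij}=([i\ge 0]-[j\ge 0])U_{ij}$, so the matrix of $[P,U]$ is supported on those pairs $(i,j)$ with $|i-j|\le L$ and with $i$ and $j$ on opposite sides of $0$. Hence $[P,U]$ has finite rank, and $PUP$ is a Fredholm operator on $\mathrm{im}\,P$. We set
\[
  \mathrm{ind}(U)=\mathrm{index}(PUP).
\]
Stability of the Fredholm index under small operator-norm perturbations makes $\mathrm{ind}$ locally constant on each $\UFP_L(1)$; by the very definition of the inductive limit topology on $\UFP(1)$, this is exactly what is needed for $\mathrm{ind}$ to descend to a function $\pi_0(\UFP(1))\to\Z$. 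A short computation using that $PUVP-(PUP)(PVP)=PU(1-P)VP$ is of finite rank shows that this function is in fact a group homomorphism.

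The generating computation is direct: for $Se(k)=e(k+1)$ the compression $PSP$ is the forward unilateral shift on $\ell^2(\Z_{\ge 0},\C)$, whose kernel is trivial and whose cokernel is spanned by $e(0)$, so $\mathrm{ind}(S)=-1$. The resulting homomorphism $\pi_0(\UFP(1))\to\Z$ is therefore surjective, and since its domain is isomorphic to $\Z$, it is necessarily an isomorphism carrying $[S]$ to a generator.

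The only real technical point is the compatibility of the Fredholm index with the inductive limit topology, rather than the operator-norm topology, on $\UFP(1)$. Once the argument is organized level by level in $\UFP_L(1)$, standard perturbation theory for Fredholm operators applies and the rest of the argument is elementary. Alternatively, one can short-circuit this step by simply invoking the GNVW index of \cite{GNVW}, which is already known to be a homotopy invariant taking the value $\pm 1$ on the shift.
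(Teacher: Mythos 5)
Your proof is correct and complete. The paper itself gives no argument for this proposition --- it simply cites \cite{GNVW} --- and the invariant you construct, the Fredholm index of the half-space compression $PUP$, is precisely the GNVW flow index in its most basic form, so you have in effect written out the details that the paper delegates to the reference: finite rank of $[P,U]$ and of $(1-P)UP$ for finite-propagation $U$, Fredholmness of $PUP$, local constancy level by level on $\UFP_L(1)$ (which is exactly what continuity out of the inductive limit topology requires), the homomorphism property via $PUVP-(PUP)(PVP)=PU(1-P)VP$, and the value $\pm1$ on $S$; together with $\pi_0(\UFP(1))\cong\Z$ from Corollary \ref{homotopy group UFP}, a surjective endomorphism of $\Z$ is an isomorphism, so $[S]$ generates. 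One cosmetic discrepancy: with the paper's convention $(S(a_i))_j=a_{j+1}$ one has $Se(k)=e(k-1)$, so $PSP$ is the backward unilateral shift and your index equals $+1$ rather than $-1$; this only flips a sign and does not affect the conclusion.
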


  Let $\Ures$ denote the group of restricted unitary operators on $\ell^2(\Z,\C)$, where $\ell^2(\Z,\C)$ is given the canonical polarization. Then as in \cite{PS}, there is a homotopy equivalence
  \[
    \Ures\simeq\Z\times B\U(\infty).
  \]
  In particular, the homotopy groups of $\Ures$ are given by
  \[
    \pi_*(\Ures)\cong\begin{cases}\Z&*\text{ is even}\\0&*\text{ is odd.}\end{cases}
  \]
  The following will be proved later.

  \begin{proposition}
    \label{inclusion Ures}
    The inclusion $\UFP(1)\to\Ures$ induces an isomorphism in even homotopy groups.
  \end{proposition}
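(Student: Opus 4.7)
The plan is to compare the homotopy-theoretic descriptions of the two groups side by side. By the Pressley-Segal theorem \cite{PS}, $\Ures\simeq\Z\times B\U(\infty)$, and Theorem \ref{homotopy type UFP} expresses $\UFP(1)$ as $\Z\times B\U(\infty)\times\overset{\circ}{\prod_{n\ge 1}}K(\ell^\infty(\Z,\Z)_S,2n-1)$. Since the $\Z\times B\U(\infty)$ factor carries all of the even homotopy groups on both sides, the task reduces to verifying that the inclusion matches up these common factors.

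First I would treat $\pi_0$ directly. By Proposition \ref{pi_0}, the class of the shift operator $S$ generates $\pi_0(\UFP(1))\cong\Z$. The image of $[S]$ in $\pi_0(\Ures)\cong\Z$ is detected by the Fredholm index of the compression $P_+SP_+\colon\ell^2(\Z_{\ge 0},\C)\to\ell^2(\Z_{\ge 0},\C)$, which equals $1$. Hence the induced map on $\pi_0$ is an isomorphism.

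For even degrees $2n\ge 2$, the strategy is to bootstrap using the Bott periodicity of $\UFP(1)$ that will be established later in this section through the uniform Roe algebra, together with the classical Bott periodicity for $\Ures$. Both isomorphisms are implemented by the external product with a fixed Bott generator, a construction which is natural with respect to the chain of inclusions $\UFP(1)\hookrightarrow\mathrm{U}(C^*_u(\Z))\hookrightarrow\Ures$, where $C^*_u(\Z)$ denotes the uniform Roe algebra. The resulting commutative square
\[
  \begin{array}{ccc}
    \pi_{2n}(\UFP(1))&\longrightarrow&\pi_{2n}(\Ures)\\
    \cong\downarrow&&\downarrow\cong\\
    \pi_{2n-2}(\UFP(1))&\longrightarrow&\pi_{2n-2}(\Ures)
  \end{array}
\]
propagates the $\pi_0$ isomorphism inductively to every even homotopy group.

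The main obstacle will be the compatibility of the two Bott periodicities with the inclusion. This hinges on the forthcoming realization of Bott periodicity for $\UFP(1)$ via the uniform Roe algebra: once this periodicity is expressed through the same external product construction used classically for $\Ures$, naturality of the Bott map produces the commutative square above, and the result follows from the $\pi_0$ computation.
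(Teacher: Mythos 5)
Your $\pi_0$ step is fine and agrees with the paper's Proposition \ref{pi_0}: the shift $S$ generates $\pi_0(\UFP(1))\cong\Z$ and its image in $\pi_0(\Ures)\cong\Z$ is detected by the index of $P_+SP_+$, which is $\pm1$. The gap is in the bootstrap to $\pi_{2n}$ for $n\ge 1$. You correctly name the compatibility of the two Bott periodicities with the inclusion as the main obstacle, but you do not resolve it, and it is not automatic. The periodicity of Theorem \ref{Bott periodicity} is a $C^*$-algebraic statement about $\U(C^*_u(|\Z|))$, natural for $*$-homomorphisms; the map $\U(C^*_u(|\Z|))\to\Ures$ is not induced by a $*$-homomorphism into a $C^*$-algebra whose unitary group (with the Pressley--Segal topology) is $\Ures$, so ``naturality of the Bott map'' does not apply as invoked. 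Even granting that both periodicities are given by product with a Bott class, you would still need to check that the two Bott classes correspond under the inclusion, which is essentially the statement you are trying to prove shifted down by one degree. Your opening comparison of the two abstract homotopy types also proves nothing by itself: a map between spaces with isomorphic homotopy groups need not induce isomorphisms on them.

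The paper closes exactly this gap by a different route: the inclusion factors as $\UFP(1)\to\U(C^*_u(|\Z|))\to\Ures$, Theorem \ref{inclusion homotopy equiv} (proved immediately before) shows the first map is a homotopy equivalence, and \cite[Lemma 6.2]{KKT2} is cited for the fact that the second map is an isomorphism on even homotopy groups. That cited lemma is precisely the input your argument is missing; without it, or an equivalent direct computation in every even degree rather than only degree $0$, your proof does not go through.
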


  By this proposition, we can say that the even homotopy groups of $\UFP(1)$ give information about ends of Hilbert bundles concerning polarization, and the odd homotopy groups of $\UFP(1))$ are fairly new invariants of Hilbert bundles. However, there has not been such an interpretation of the homotopy groups of $\Ures$, so our approach gives a new insight on the topology of $\Ures$ too. Finally, we recall from \cite{GNVW} that $\pi_0(\UFP(1))\cong\Z$ is generated by the shift operator
  \[
    S\colon\ell^2(\Z,\C)\to\ell^2(\Z,\C),\quad(S(a_i))_j=a_{j+1}
  \]
  which is obviously a unitary operator of finite propagation.


  \subsection{Periodic operators}

  In \cite{KKT1}, a topological subgroup of $\UFP(1)$ attaining an interesting part of the homotopy groups of $\UFP(1)$ was introduced. We recall it here. An element $U\in\UFP(1)$ is \emph{periodic} if
  \[
    S^nUS^{-n}=U
  \]
  for some non-zero integer $n \ne 0$. Clearly, a product of two periodic unitary operators of finite propagation is periodic. Let $\widehat{\UFP}$ denote the topological subgroup of $\UFP(1)$ consisting of periodic elements.

  \begin{definition}
    We say that an $\ell^2(\Z,\C)$-bundle has a \emph{periodic end} if its transition functions are continuous functions into $\widehat{\UFP}$.
  \end{definition}

  We recall results on $\widehat{\UFP}$ proved in \cite{KKT1}. As well as Theorem \ref{homotopy type UFP}, the homotopy type of $\widehat{\UFP}$ was described.

  \begin{theorem}
    \label{homotopy type periodic}
    There is a homotopy equivalence
    \[
      \widehat{\UFP}\simeq\Z\times B\U(\infty)\times\overset{\circ}{\prod_{n\ge 1}}K(\Q,2n-1).
    \]
  \end{theorem}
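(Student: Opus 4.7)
The plan is to model the topological group $\widehat{\UFP}$ by a tower of polynomial loop spaces via Bloch--Floquet decomposition, invoke Pressley--Segal polynomial approximation, and compute the homotopy type of the resulting colimit.

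First I would write $\widehat{\UFP}=\bigcup_{n\ge 1}\widehat{\UFP}_n$, where $\widehat{\UFP}_n$ is the subgroup of operators commuting with $S^n$. For each fixed $n$, the partition $\Z=\bigsqcup_{j=0}^{n-1}(j+n\Z)$ combined with the Fourier isomorphism on $n\Z$ assembles into a unitary $\Phi_n\colon \ell^2(\Z,\C)\to L^2(S^1,\C^n)$ intertwining $S^n$ with multiplication by the coordinate $z\in S^1$. Under $\Phi_n$, an element of $\widehat{\UFP}_n$ becomes multiplication by a continuous map $S^1\to \U(n)$ whose matrix entries are Laurent polynomials of degree controlled by the propagation bound. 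This identifies $\widehat{\UFP}_n$ topologically with the polynomial (free) loop group $L_\mathrm{pol}\U(n)$ in its inductive limit topology over degree. The polynomial approximation theorem of Pressley--Segal (\cite{PS}) gives a weak equivalence $L_\mathrm{pol}\U(n)\simeq L\U(n)$, and the evaluation fibration $\Omega\U(n)\to L\U(n)\to \U(n)$ splits via constant loops, yielding $\pi_*(L\U(n))\cong \pi_*(\U(n))\oplus \pi_{*+1}(\U(n))$.

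Second, I would analyze the transition $\widehat{\UFP}_n\hookrightarrow\widehat{\UFP}_{kn}$, which at the level of Pontryagin duals is induced by the $k$-fold covering $\zeta\mapsto\zeta^k=\eta$ of dual circles. The transition sends $f\colon S^1_\zeta\to \U(n)$ to its pushforward $g(\eta)=\diag(f(\zeta_1(\eta)),\dots,f(\zeta_k(\eta)))$, where $\zeta_1,\dots,\zeta_k$ are locally the $k$-th roots of $\eta$. On the base factor $\pi_*(\U(n))$ (constant loops) this is the block-diagonal embedding $\U(n)\hookrightarrow\U(kn)$, inducing multiplication by $k$ on the stable odd homotopy $\pi_{2j-1}(\U)=\Z$. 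On the fiber factor $\pi_{*+1}(\U(n))$ the map is a transfer-type pushforward, which a direct calculation shows to be the identity on stable homotopy; the prototype is $\pi_0(\Omega\U)$, where the total winding of $\det g$ over one loop of $\eta$ equals the winding of $\det f$ over one loop of $\zeta$, since the $k$ branches of $\zeta_i(\eta)$ collectively traverse the $\zeta$-circle once as $\eta$ traverses its circle once.

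Taking the colimit over the divisibility order yields $\pi_{2n}(\widehat{\UFP})=\Z$ for $n\ge 0$ (from the fiber direction, organizing into $\Z\times B\U(\infty)$ via Bott periodicity of $\Omega\U(\infty)$) and $\pi_{2n-1}(\widehat{\UFP})=\mathrm{colim}\bigl(\Z\xrightarrow{\cdot 2}\Z\xrightarrow{\cdot 3}\Z\xrightarrow{\cdot 4}\cdots\bigr)=\Q$ for $n\ge 1$ (from the base direction). Since $\widehat{\UFP}$ is a topological group whose positive odd homotopy groups are $\Q$-vector spaces, an H-space splitting argument---choosing compatible maps into $K(\Q,2n-1)$ realizing generators and assembling with the canonical map to $\Z\times B\U(\infty)$---produces the asserted homotopy equivalence. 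I expect the main obstacle to be the second step: precisely verifying that the fiber-direction transition is the identity on all higher stable homotopy groups (not merely on $\pi_0$), which requires a careful transfer calculation for the degree-$k$ covering of $S^1$, and then globally assembling the H-space sections into the weak-product decomposition.
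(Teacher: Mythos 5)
The paper does not actually prove this theorem: it is recalled from \cite{KKT1}, so there is no in-paper argument to compare against line by line. That said, your route is sound and, at the level of its key ideas, close to the known one: the exhaustion of $\widehat{\UFP}$ by the centralizers of powers of $S$, the identification of each piece with a (Laurent polynomial) loop group via block decomposition of $\Z$, and the appearance of $\Q$ as the periodic part of the shift-coinvariants $\ell^\infty(\Z,\Z)_S$ --- which is exactly your $\mathrm{colim}\bigl(\Z\xrightarrow{\cdot 2}\Z\xrightarrow{\cdot 3}\cdots\bigr)$, cf.\ Proposition \ref{1/n} and Corollary \ref{inclusion periodic}. Your central computation is correct and checkable: for the connected $k$-fold self-covering $p$ of $S^1$, the transfer multiplies classes pulled back from the base by $k$ (since $p_!(1)=k$ in $K^0(S^1)$, using $\tilde{K}^0(S^1)=0$) and is the identity on $\tilde{K}^1(S^1)$ (your determinant/winding argument), and the projection formula propagates this to all stable homotopy groups of $L\U(n)$, giving $\pi_{2j}=\Z$ and $\pi_{2j-1}=\Q$ in the colimit.

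Two points need more than you give them. First, $\widehat{\UFP}$ carries the subspace topology from $\UFP(1)$, and it is not automatic that $\pi_*(\widehat{\UFP})=\mathrm{colim}_n\pi_*(\widehat{\UFP}_n)$: operators of bounded propagation but unbounded period do not form a discrete family (already among diagonal unitaries, periodic points of all periods sit non-closed inside $\UFP_0$), so you must argue that spheres and homotopies mapping into $\widehat{\UFP}$ factor through a single $\widehat{\UFP}_n$, or work with the colimit topology and compare it to the subspace topology. Second, your assembly of the $\Z\times B\U(\infty)$ factor ``via Bott periodicity of $\Omega\U(\infty)$'' tacitly uses the projections $L\U(n)\to\Omega\U(n)$, but these are not compatible with your transition maps: the pushforward of a based loop is not based, since $p_*f$ evaluated at $1$ is $\diag(f(\omega^0),\dots,f(\omega^{k-1}))$ over the $k$-th roots of unity $\omega^j$, so the splittings $L\U(n)\simeq\U(n)\times\Omega\U(n)$ do not assemble over the colimit. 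The fix is to produce the map to $\Z\times B\U(\infty)$ externally --- for instance the inclusion $\widehat{\UFP}\to\Ures\simeq\Z\times B\U(\infty)$, which is exactly the mechanism the paper relies on in Proposition \ref{inclusion Ures} --- and then run your H-space splitting using that map together with rational cohomology classes dual to the odd homotopy (these exist because the rational Hurewicz image of a grouplike H-space is primitive). With those two repairs your outline constitutes a complete proof.
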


  \begin{corollary}
    \label{homotopy group periodic}
    There is an isomorphism
    \[
      \pi_*(\widehat{\UFP})\cong\begin{cases}\Q&*\text{ is odd}\\\Z&*\text{ is even.}\end{cases}
    \]
  \end{corollary}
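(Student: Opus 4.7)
The plan is to read off the homotopy groups directly from the homotopy equivalence in Theorem \ref{homotopy type periodic}, since this corollary is essentially just a bookkeeping computation applied to the decomposition
\[
  \widehat{\UFP}\simeq\Z\times B\U(\infty)\times\overset{\circ}{\prod_{n\ge 1}}K(\Q,2n-1).
\]

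First, I would compute the homotopy groups of each factor on the right. The discrete factor $\Z$ contributes $\Z$ in degree $0$ and vanishes in positive degrees. By Bott periodicity for the infinite unitary group, $\pi_{2k}(B\U(\infty))\cong\Z$ for $k\ge 1$ and $\pi_{2k-1}(B\U(\infty))=0$, with $\pi_0(B\U(\infty))=0$. Each Eilenberg--MacLane factor $K(\Q,2n-1)$ has $\pi_{2n-1}\cong\Q$ and zero elsewhere.

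Next, I would handle the weak product. Since $\overset{\circ}{\prod_{n\ge 1}}K(\Q,2n-1)=\lim_{N\to\infty}\prod_{n=1}^N K(\Q,2n-1)$ is a sequential colimit along closed inclusions, and spheres are compact, $\pi_m$ commutes with this colimit, giving
\[
  \pi_m\Bigl(\,\overset{\circ}{\prod_{n\ge 1}}K(\Q,2n-1)\Bigr)\cong\bigoplus_{n\ge 1}\pi_m(K(\Q,2n-1)),
\]
which equals $\Q$ when $m=2n-1$ for some $n\ge 1$ and vanishes otherwise.

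Finally, I would assemble the contributions using that homotopy groups of a product are the product of homotopy groups. In degree $0$ only the $\Z$ factor contributes, yielding $\Z$. In positive even degree $2k$ only the $B\U(\infty)$ factor contributes, yielding $\Z$. In odd degree $2k-1$ only the weak product contributes, yielding $\Q$. This is exactly the asserted description of $\pi_*(\widehat{\UFP})$. There is no real obstacle here; the only point worth checking carefully is the commutation of $\pi_m$ with the weak product, and this is a standard consequence of the compactness of $S^m$ together with the fact that each $\prod_{n=1}^N K(\Q,2n-1)\hookrightarrow\prod_{n=1}^{N+1}K(\Q,2n-1)$ is a closed cofibration.
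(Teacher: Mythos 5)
Your proposal is correct and matches the paper's (implicit) argument: the corollary is stated as an immediate consequence of Theorem \ref{homotopy type periodic}, obtained exactly by reading off the homotopy groups of the factors $\Z$, $B\U(\infty)$ (via Bott periodicity), and the weak product of Eilenberg--MacLane spaces, with $\pi_m$ commuting with the sequential colimit by compactness of $S^m$. Nothing is missing.
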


  By the construction of homotopy equivalences of Theorems \ref{homotopy type UFP} and \ref{homotopy type periodic}, we have:

  \begin{corollary}
    \label{inclusion periodic}
    The inclusion $\widehat{\UFP}\to\UFP(1)$ is an isomorphism in even homotopy groups and identified with the injection $\Q\to\ell^\infty(\Z,\Z)_S$ which is the extension of the inclusion
    \[
      \Z\to\ell^\infty(\Z,\Z)_S,\quad a\mapsto[(\ldots,a,a,a,\ldots)]
    \]
    in odd homotopy groups.
  \end{corollary}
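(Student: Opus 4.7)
The plan is to compare the explicit homotopy equivalences of Theorems \ref{homotopy type UFP} and \ref{homotopy type periodic} by checking that the detection maps used in \cite{KKT1} to construct the right-hand sides are natural with respect to the inclusion $\widehat{\UFP} \hookrightarrow \UFP(1)$; the result then reduces to examining the image of each tensor factor separately.

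For the $\Z \times B\U(\infty)$ factor, both decompositions arise from the polarization: the composition $\UFP(1) \to \Ures \simeq \Z \times B\U(\infty)$ (cf.\ Proposition \ref{inclusion Ures}) is defined on all of $\UFP(1)$, and its restriction to $\widehat{\UFP}$ supplies the corresponding factor in Theorem \ref{homotopy type periodic}. Since these constructions agree on $\widehat{\UFP}$, the inclusion induces the identity on the $\Z \times B\U(\infty)$ factor, and in particular an isomorphism on even homotopy groups.

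For the odd homotopy groups, the isomorphism $\pi_{2n-1}(\UFP(1)) \cong \ell^\infty(\Z,\Z)_S$ is realized by a shift-coinvariant detection map that assigns to a class a bounded integer sequence modulo $(1-S)$-equivalence. A periodic operator $U$ with $S^n U S^{-n} = U$ produces a sequence that is itself period-$n$. The key algebraic claim is then that for any period-$n$ sequence $a \in \ell^\infty(\Z, \Z)$,
\[
  [a] = \frac{a_0 + a_1 + \cdots + a_{n-1}}{n}\,[\mathbf{1}]
\]
in $\ell^\infty(\Z,\Z)_S$, where $\mathbf{1} = (\ldots, 1, 1, 1, \ldots)$ and the right-hand side uses the $\Q$-vector space structure from Proposition \ref{Q-vector space}. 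To establish this, I would show that every period-$n$ sequence $d$ with vanishing period-sum $\sum_{i=0}^{n-1} d_i = 0$ lies in $(1-S)\ell^\infty(\Z,\Z)$: set $b_0 = 0$ and $b_{i+1} = b_i - d_i$; the vanishing period-sum forces $b_n = b_0$, so $b$ is period-$n$ (hence bounded), and $(1-S)(b) = d$ by construction. Applying this to $na - (a_0 + \cdots + a_{n-1})\mathbf{1}$ and dividing by $n$ in the $\Q$-vector space yields the claim.

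This shows that the restriction of the detection map to $\widehat{\UFP}$ lands in the $\Q$-subspace of $\ell^\infty(\Z,\Z)_S$ generated by $[\mathbf{1}]$, and does so precisely via the injection $\Q \to \ell^\infty(\Z,\Z)_S$ described in the statement. The main obstacle is verifying that the detection map in \cite{KKT1} takes the expected form on $\widehat{\UFP}$, i.e., that the bounded integer sequence extracted from a periodic operator of period $n$ is itself period-$n$; this requires unpacking the definition of the $K(\ell^\infty(\Z,\Z)_S, 2n-1)$-factor in Theorem \ref{homotopy type UFP}, which is the only nontrivial input from \cite{KKT1} needed here.
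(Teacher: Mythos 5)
Your proposal is correct and matches the paper's (unwritten) argument: the paper states this corollary as an immediate consequence of the constructions of the homotopy equivalences in Theorems \ref{homotopy type UFP} and \ref{homotopy type periodic} from \cite{KKT1}, which is exactly the naturality check you outline, and the honest caveat you flag at the end (that the detection map sends a period-$n$ operator to a period-$n$ sequence) is precisely the input the paper also takes from \cite{KKT1} without further justification. Your algebraic claim that a period-$n$ sequence represents $\frac{1}{n}(a_0+\cdots+a_{n-1})[\mathbf{1}]$ in $\ell^\infty(\Z,\Z)_S$ is a correct and slightly more general form of what the paper itself proves later as Proposition \ref{1/n} (via $c+Sc+\cdots+S^{n-1}c=\mathbf{1}$) and the surrounding discussion of the Ces\`aro average in Section \ref{The abelian group}.
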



  \subsection{Uniform Roe algebra}

  In \cite{KKT1}, the homotopy equivalence of Theorem \ref{homotopy type UFP} was constructed by using particular properties of $\ell^2(\Z,\C)$, so that the method is not applicable to $\ell^2(\Z^n,\C)$ for $n>1$. However, if we consider a more analytical side, then we can use rich techniques of $K$-theory. This is the motivation to consider uniform Roe algebras. See \cite{KKT2} for details.

  First, we recall the definition of the uniform Roe algebras with respect to operators of finite propagation.

  \begin{definition}
    The norm closure of the space of finite propagation operators in the bounded operators on $\ell^2(\Z^n,\C)$ is called the uniform Roe algebra, and denoted by $C^\ast_u(|\Z^n|)$.
  \end{definition}

  Let $\U(C^*_u(|\Z^n|))$ denote the subspace of $C^*_u(|\Z^n|)$ consisting of unitary elements. As mentioned above, we can employ $K$-theory techniques to investigate $\U(C^*_u(|\Z^n|))$, which is possible because spaces are completed. In particular, by using such techniques, the authors \cite{KKT2} described the homotopy type of $\U(C^*_u(|\Z^n|))$.

  \begin{theorem}
    \label{homotopy type Roe}
    There exists a homotopy equivalence
    \[
      \U(C^*_u(|\Z|))\simeq\Z\times B\U(\infty)\times\prod_{n\ge1}^\circ K(\ell^\infty(\Z,\Z)_S,2n-1).
    \]
  \end{theorem}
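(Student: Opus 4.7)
The plan is to deduce Theorem \ref{homotopy type Roe} from Theorem \ref{homotopy type UFP} by showing that the natural inclusion $\iota\colon \UFP(1) \to \U(C^*_u(|\Z|))$ is a weak homotopy equivalence. Since both sides are topological groups of CW homotopy type, weak equivalence will upgrade to the stated homotopy equivalence. Note that $\iota$ is continuous because on each slice $\UFP_L(\ell^2(\Z,\C))$ the inductive limit topology agrees with the operator norm topology, and the latter is precisely what $\U(C^*_u(|\Z|))$ inherits from the Roe algebra.

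For surjectivity of $\iota_*$ on $\pi_k$, given a continuous map $f\colon S^k \to \U(C^*_u(|\Z|))$, I would use compactness of $S^k$ together with the defining density of finite propagation operators in $C^*_u(|\Z|)$ to approximate $f$ uniformly by a map with image in finite propagation operators. Concretely: choose a finite open cover of $S^k$ with partition of unity $\{\rho_i\}$, select points $x_i$ and finite propagation operators $T_i$ with $\|T_i - f(x_i)\|$ uniformly small, and form $g = \sum_i \rho_i T_i$. Then $g$ takes values in $\UFP_L$ with $L = \max_i \mathrm{prop}(T_i)$, though not in unitaries. Continuous functional calculus converts $g$ to a unitary $\tilde g(x) = g(x)(g(x)^*g(x))^{-1/2}$ (valid since $g$ is uniformly close to a unitary, hence invertible), and the straight-line-plus-polar-decomposition construction furnishes a homotopy from $f$ to $\tilde g$ inside $\U(C^*_u(|\Z|))$.

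The main obstacle is that $\tilde g$ lies only in $\U(C^*_u(|\Z|))$, since $(g^*g)^{-1/2}$ need not have finite propagation even when $g$ does. To land exactly in $\UFP(1)$, I would iterate the approximation: apply the same scheme to $\tilde g$ with a finer tolerance, producing a map $g'$ into finite propagation operators with $\|g' - \tilde g\|$ small, then re-normalize. Alternatively, combine band truncation (projecting off matrix entries of distance exceeding a prescribed bound) with renormalization, and show convergence of the resulting sequence inside $\UFP_{L'}$ for some $L' \ge L$. Here one must carefully control the interaction between the functional calculus and the propagation bound, which is the technical heart of the argument and the step where the structure of the uniform Roe algebra (as the norm closure of finite propagation operators) is essential.

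For injectivity of $\iota_*$, the parallel argument applied to a homotopy $H\colon S^k \times [0,1] \to \U(C^*_u(|\Z|))$ between two maps into $\UFP(1)$ produces, after approximation and normalization, a homotopy inside $\UFP(1)$ between small perturbations of the originals; these perturbations can be absorbed by further small homotopies within $\UFP(1)$ (using that unitarity is an open condition and $\UFP_L$ is locally convex up to functional calculus). Combining surjectivity and injectivity gives that $\iota$ is a weak equivalence, and Theorem \ref{homotopy type UFP} then yields the asserted homotopy equivalence for $\U(C^*_u(|\Z|))$.
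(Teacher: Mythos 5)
There is a genuine gap at the step you yourself flag as the ``technical heart.'' Norm-density of finite propagation operators in $C^*_u(|\Z|)$ does not let you push a sphere of unitaries into $\UFP(1)$: after the affine averaging $g=\sum_i\rho_iT_i$ you do get propagation bounded by $L$, but the polar correction $(g^*g)^{-1/2}$ has infinite propagation, and your proposed fix --- iterate the approximation, or band-truncate and renormalize --- only produces a sequence of finite propagation operators converging in norm to a unitary. The norm limit of such a sequence is exactly a generic element of the uniform Roe algebra (that is what the completion is), not an element of $\UFP(1)$; nothing in the scheme forces the propagation bounds $L'$ to stabilize, and each renormalization destroys the band structure again. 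So the argument never actually lands in $\UFP(1)$, and surjectivity of $\iota_*$ on $\pi_k$ is not established. There is also a circularity hazard: the fact that $\iota\colon\UFP(1)\to\U(C^*_u(|\Z|))$ is a homotopy equivalence is Theorem \ref{inclusion homotopy equiv} of the paper, and its proof (Lemmas \ref{inclusion odd} and \ref{inclusion even}) takes the homotopy-group computations for $\U(C^*_u(|\Z|))$ from \cite{KKT2} --- i.e.\ it uses Theorem \ref{homotopy type Roe} as an input rather than deriving it. You cannot invoke that equivalence to prove the theorem it depends on.

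For comparison, the paper does not prove Theorem \ref{homotopy type Roe} internally at all: it is imported from \cite{KKT2}, where it is obtained by computing the operator $K$-theory of the completed algebra $C^*_u(|\Z|)$ and identifying $\pi_*(\U(C^*_u(|\Z|)))$ with $K_{*+1}(C^*_u(|\Z|))$ --- a computation that is available precisely because the algebra is complete, which is the whole point of passing to the uniform Roe algebra. Your strategy of transferring the homotopy type from $\UFP(1)$ (known by \cite{KKT1}) across the inclusion is a legitimately different and attractive route, but it requires an independent proof that $\iota$ is a weak equivalence, and a soft density-plus-functional-calculus argument is not enough to supply one.
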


  Moreover, the Bott periodicity for $\U(C^*_u(|\Z|))$ was proved in \cite{KKT2}.

  \begin{theorem}
    \label{Bott periodicity}
    There is a homotopy equivalence
    \[
      B\U(C^*_u(|\Z|))\simeq\Omega\U(C^*_u(|\Z|)).
    \]
  \end{theorem}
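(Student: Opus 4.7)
The plan is to derive this Bott-periodicity equivalence from the explicit decomposition of $\U(C^*_u(|\Z|))$ in Theorem \ref{homotopy type Roe}, combined with the classical complex Bott periodicity $\Omega(\Z\times B\U(\infty))\simeq\U(\infty)$ and the elementary formula $\Omega K(G,n)\simeq K(G,n-1)$. The rough idea is that once we have the weak product decomposition of $\U(C^*_u(|\Z|))$, both applying $\Omega$ and applying $B$ can be carried out factor by factor, and the two resulting decompositions are made to match by an additional invocation of classical Bott.

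First, applying the based loop functor to Theorem \ref{homotopy type Roe} and using that $\Omega$ commutes with the weak product (since it commutes with the defining colimit) gives
\[
\Omega\U(C^*_u(|\Z|)) \simeq \Omega(\Z\times B\U(\infty)) \times \prod_{n\ge 1}^\circ \Omega K(\ell^\infty(\Z,\Z)_S,2n-1) \simeq \U(\infty) \times \prod_{n\ge 1}^\circ K(\ell^\infty(\Z,\Z)_S,2n-2).
\]
Next, I would compute $B\U(C^*_u(|\Z|))$ by delooping factor by factor. Using $BK(G,n-1)\simeq K(G,n)$ for the Eilenberg--MacLane factors and one more instance of classical complex Bott periodicity to identify the delooping of $B\U(\infty)$ together with $B\Z$ with the copy of $\U(\infty) \times K(\ell^\infty(\Z,\Z)_S,0)$ appearing on the loop side, a direct comparison of the two product decompositions produces the desired equivalence $B\U(C^*_u(|\Z|))\simeq\Omega\U(C^*_u(|\Z|))$.

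The principal obstacle is the enhancement of Theorem \ref{homotopy type Roe} from a homotopy equivalence of spaces to one compatible with the classifying space functor, so that the deloopings can be computed factor by factor at all. This requires that the splitting be at least an $H$-space splitting, and ideally an infinite loop space splitting. The necessary compatibility is supplied by the $K$-theoretic construction of \cite{KKT2}, where the decomposition is built from spectrum-level maps and the standard Bott periodicity of $C^*$-algebraic $K$-theory is available, so that the Bott equivalence on $\U(C^*_u(|\Z|))$ arises as a map of infinite loop spaces that automatically delooper. Outside this $K$-theoretic framework, a naive pointwise delooping of the decomposition of Theorem \ref{homotopy type Roe} need not coincide with $B\U(C^*_u(|\Z|))$, and this is the technical heart of the argument.
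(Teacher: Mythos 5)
First, be aware that the paper does not prove Theorem \ref{Bott periodicity} at all: it is recalled verbatim from \cite{KKT2}, where it is obtained from operator-$K$-theoretic Bott periodicity for the $C^*$-algebra $C^*_u(|\Z|)$ (the relevant point being that the unitary group of this algebra already computes its $K$-theory, so the Banach-algebra Bott map can be realized at the level of $\U(C^*_u(|\Z|))$ itself, not just of its matrix stabilization). So there is no in-paper argument to compare with, and any proof must ultimately pass through that $C^*$-algebraic machinery.

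Your route --- deloop the splitting of Theorem \ref{homotopy type Roe} factor by factor --- has two genuine problems. The first you name yourself: Theorem \ref{homotopy type Roe} is stated only as a homotopy equivalence of spaces, and $B(-)$ of a topological group cannot be computed from a space-level product decomposition; you need the splitting to be one of grouplike $H$-spaces (better, of infinite loop spaces) compatible with the multiplication on $\U(C^*_u(|\Z|))$. Your fix is to say this compatibility ``is supplied by the $K$-theoretic construction of \cite{KKT2}''; but the spectrum-level Bott equivalence there is exactly the content of the theorem being proved, so at that point the argument becomes circular rather than an independent derivation. The second problem is that the factor-by-factor comparison does not actually close up. Looping the decomposition produces a factor $\Omega K(\ell^\infty(\Z,\Z)_S,1)\simeq K(\ell^\infty(\Z,\Z)_S,0)$, so $\pi_0(\Omega\U(C^*_u(|\Z|)))\cong\pi_1(\U(C^*_u(|\Z|)))\cong\ell^\infty(\Z,\Z)_S\ne 0$, whereas $B\U(C^*_u(|\Z|))$ is connected. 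The space-level form of Bott periodicity for a Banach algebra is $\Omega\U(A)\simeq K_0(A)\times B\U(A)$, i.e.\ the classifying space is only the basepoint component of the loop space; this is also what the paper's later corollary $B\UFP(1)\simeq\U(\infty)\times\prod^{\circ}_{n\ge 1}K(\ell^\infty(\Z,\Z)_S,2n)$ implicitly uses. Your claim that a ``direct comparison of the two product decompositions produces the desired equivalence'' silently discards this $K(\ell^\infty(\Z,\Z)_S,0)$ factor; a careful writeup has to either restrict to the identity component of $\Omega\U(C^*_u(|\Z|))$ or record the extra $K_0$ summand, and should flag that the statement as printed is to be read with that convention.
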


  The homotopy type of $\U(C^*_u(|\Z^2|))$ is also studied in \cite{KKT2}, and we have:

  \begin{theorem}
    \label{homotopy type UFP 2}
    There exists a homotopy equivalence
    \[
      \U(C^*_u(|\Z^2|))\simeq V_1\times\U(\infty)\times\prod_{n\ge1}^\circ K(V_0,2n-1)\times K(V_1,2n)
    \]
    where $V_0$ is the kernel of the natural map $K_0(C^*_u(|\Z^2|))\to K_0(C^*(|\Z^2|))$ and $V_1=K_1(C^*_u(|\Z^2|))$.
  \end{theorem}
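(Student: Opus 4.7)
The strategy parallels the proofs of Theorems \ref{homotopy type Roe} and \ref{Bott periodicity}: route everything through the topological $K$-theory of the unital $C^*$-algebra $A=C^*_u(|\Z^2|)$. Two inputs drive the argument. First, the $2$-dimensional analogue of Theorem \ref{Bott periodicity}, namely $B\U(A)\simeq\Omega\U(A)$, which one establishes by the same $K$-theoretic route used in \cite{KKT2} (polar decomposition, a Kuiper-type contraction of the ambient spatial unitary group, and Bott periodicity of $K_*(A)$); this part is essentially dimension-insensitive and should go through with only notational changes. Second, the identification $\pi_i(\U(A))\cong K_{i+1}(A)$, which combined with Bott periodicity gives
\[
\pi_{2k}(\U(A))\cong K_1(A)=V_1,\qquad \pi_{2k+1}(\U(A))\cong K_0(A),
\]
for every $k\ge 0$. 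The computation therefore reduces to decomposing $K_0(A)$.

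For this I would invoke the canonical inclusion $C^*_u(|\Z^2|)\hookrightarrow C^*(|\Z^2|)$ into the Roe algebra. By definition the induced map on $K_0$ has kernel $V_0$. A standard coarse-index calculation (or the coarse Baum--Connes isomorphism for $\Z^2$) gives $K_0(C^*(|\Z^2|))\cong\Z$, generated by the class of a rank-one projection at a single point. This generator lifts tautologically to $K_0(A)$ since every finite-rank projection has zero propagation, producing a splitting
\[
K_0(A)\cong V_0\oplus\Z.
\]
Matching this against the homotopy groups of the right-hand side of the statement: the discrete factor $V_1$ covers $\pi_0$, the $K(V_1,2n)$-factors cover the higher even groups, the $\U(\infty)$-factor absorbs the $\Z$-summand appearing in every odd degree, and the $K(V_0,2n-1)$-factors cover $V_0$.

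It then remains to construct an explicit map from $\U(A)$ to the product inducing these isomorphisms on $\pi_*$. The natural candidates are the tautological map $\U(A)\to\pi_0(\U(A))=V_1$, a homotopy section of the inclusion $\U(\infty)\hookrightarrow\U(A)$ (supplied by the infinite-loop-space structure just established), and cohomology classes in $H^{2n-1}(\U(A);V_0)$ and $H^{2n}(\U(A);V_1)$ representing the appropriate generators of the $\pi_*$-dual basis. Assembling these into a product map and comparing on $\pi_*$ with the dimension count above yields a weak equivalence, hence a homotopy equivalence of CW H-spaces.

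The main obstacle is the last step: showing that the Postnikov $k$-invariants of $\U(A)$ all vanish, so that the space actually splits as a product of Eilenberg--MacLane spaces and $\U(\infty)$. In the one-dimensional case (Theorem \ref{homotopy type Roe}) this vanishing is witnessed by the explicit shift-based generators of Proposition \ref{pi_0}, and there is no analogous one-parameter family in $\Z^2$. My plan is to exploit the infinite-loop-space structure coming from the 2-dimensional Bott periodicity, together with a rationalization argument mirroring Proposition \ref{Q-vector space}, to reduce the $k$-invariants to torsion-free targets where they can be shown to vanish by a direct Chern-character-style computation. This is the technical heart of the argument and the point where the $\Z^2$ case differs qualitatively from the $\Z$ case of \cite{KKT1,KKT2}.
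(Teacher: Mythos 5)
The paper does not prove this statement at all: Theorem \ref{homotopy type UFP 2} is quoted verbatim from \cite{KKT2} (``The homotopy type of $\U(C^*_u(|\Z^2|))$ is also studied in \cite{KKT2}, and we have:''), so there is no in-paper proof to compare against. Judged on its own terms, your outline follows the expected $K$-theoretic route (Bott periodicity for $\U(A)$, $\pi_i(\U(A))\cong K_{i+1}(A)$, then a decomposition of $K_0(A)$ and $K_1(A)$), and the reduction of the homotopy groups to $K_*(C^*_u(|\Z^2|))$ is sound. But two steps are not proofs as written.

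First, the splitting $K_0(A)\cong V_0\oplus\Z$ with the $\Z$ ``generated by a rank-one projection'' is unjustified. What you need is that the composite $\Z\to K_0(C^*_u(|\Z^2|))\to K_0(C^*(|\Z^2|))\cong\Z$ is an isomorphism; the class of a rank-one projection in the Roe algebra of $\Z^2$ is not obviously the Baum--Connes generator (which is an index class of a Dirac-type operator), and in general the point class and the fundamental class of $\R^2$ in coarse $K$-homology must be compared explicitly. Second, and more seriously, matching homotopy groups does not yield the stated product decomposition: you must produce an actual retraction of $\U(A)$ onto a $\U(\infty)$ factor (not merely onto $\prod^\circ K(\Z,2n-1)$, which has the same homotopy groups but a different homotopy type), and you must show all $k$-invariants linking the $K(V_0,2n-1)$ and $K(V_1,2n)$ factors vanish. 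You flag this yourself as the ``technical heart,'' but the proposed remedy --- rationalization plus a Chern-character computation --- presumes that $V_0$ and $V_1$ are $\Q$-vector spaces (the analogue of Proposition \ref{Q-vector space}), which the paper explicitly does not establish for $\Z^2$ (``$V_0$ and $V_1$ are not explicitly determined yet''). Without that divisibility input, or some other mechanism forcing the splitting, the argument does not close; this is precisely the content one must import from \cite{KKT2} rather than rederive by the one-dimensional template.
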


  The abelian groups $V_0$ and $V_1$ are not explicitly determined yet. In \cite{KKT2}, the homotopy groups of $\U(C^*_u(|\Z^n|))$ are also determined such that
  \[
    \pi_*(\U(C^*_u(|\Z^n|)))\cong K_*(\U(C^*_u(|\Z^n|)))
  \]
  where $K_*(\U(C^*_u(|\Z^n|)))$ are not yet determined either. See \cite{KKT2} for details.


  \subsection{Bott periodicity and characteristic classes}

  We prove the Bott periodicity of $\UFP(1)$ and determine the rational cohomology of the classifying space $B\UFP (1)$ by applying results on $\U(C^*_u(|\Z|))$. By Theorems \ref{homotopy type UFP} and \ref{homotopy type Roe}, we can naively expect that the inclusion $\UFP(1)\to\U(C^*_u(|\Z|))$ is a homotopy equivalence. We prove this expectation is true.

  \begin{lemma}
    \label{inclusion odd}
    The inclusion $\UFP(1)\to\U(C^*_u(|\Z|))$ is an isomorphism in odd homotopy groups.
  \end{lemma}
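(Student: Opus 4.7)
By Corollary \ref{homotopy group UFP} and Theorem \ref{homotopy type Roe}, both $\pi_{2n-1}(\UFP(1))$ and $\pi_{2n-1}(\U(C^*_u(|\Z|)))$ are abstractly isomorphic to $\ell^\infty(\Z,\Z)_S$ for each $n\ge 1$, so the task reduces to verifying that the inclusion $\iota\colon\UFP(1)\to\U(C^*_u(|\Z|))$ realizes this identification. The plan is to compare the two product decompositions of Theorems \ref{homotopy type UFP} and \ref{homotopy type Roe} by tracking explicit representatives of the odd-degree homotopy classes through $\iota$.

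First I would extract from the proof of Theorem \ref{homotopy type UFP} in \cite{KKT1} an explicit family of maps
\[
\phi_{n,a}\colon S^{2n-1}\to\UFP(1),
\]
indexed by $n\ge 1$ and $a\in\ell^\infty(\Z,\Z)$, whose homotopy classes realize $[a]\in\ell^\infty(\Z,\Z)_S\cong\pi_{2n-1}(\UFP(1))$. These are built from the explicit unitaries of finite propagation used in the construction of the homotopy equivalence. Because a continuous map into $\UFP(1)$ with the inductive limit topology is automatically continuous into $\U(C^*_u(|\Z|))$ with the norm topology, the compositions $\iota\circ\phi_{n,a}$ are well defined as maps of topological groups.

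Next I would verify that $\iota\circ\phi_{n,a}$ represents the element $[a]\in\ell^\infty(\Z,\Z)_S\cong\pi_{2n-1}(\U(C^*_u(|\Z|)))$ under the identification of Theorem \ref{homotopy type Roe}. In \cite{KKT2}, this identification is made through $K$-theoretic invariants on $C^*_u(|\Z|)$, so the verification amounts to computing these invariants on the explicit unitaries representing $\phi_{n,a}$ and matching them with the corresponding element of $\ell^\infty(\Z,\Z)_S$. A useful compatibility check is the inclusion of the periodic subgroup $\widehat{\UFP}$ from Corollary \ref{inclusion periodic}: restricted to $\widehat{\UFP}$, both identifications must agree with the rational diagonal $\Q\hookrightarrow\ell^\infty(\Z,\Z)_S$, providing a large compatible subfamily and serving as a sanity check that the constructions in \cite{KKT1,KKT2} are normalized the same way.

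The main obstacle will be reconciling the direct homotopy-theoretic construction of \cite{KKT1} with the $C^*$-algebraic $K$-theoretic approach of \cite{KKT2}: the two papers arrive at $\ell^\infty(\Z,\Z)_S$ by quite different means, and a careful bookkeeping of invariants is needed to align them. A cleaner shortcut worth attempting is a Karoubi-type density argument: the algebra of finite propagation operators is a dense, spectrally invariant (holomorphically closed) subalgebra of $C^*_u(|\Z|)$, which by Karoubi's density theorem would force an isomorphism of $K$-theory, and combined with the description of both spaces as products of Eilenberg--MacLane spaces, would immediately yield the isomorphism on odd homotopy groups.
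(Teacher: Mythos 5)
Your main plan is essentially the paper's route: the paper also fixes explicit representatives of the odd homotopy classes of $\UFP(1)$ and pushes them through the inclusion, matching the identifications of Theorems \ref{homotopy type UFP} and \ref{homotopy type Roe}. What your outline is missing is the concrete device that makes the ``careful bookkeeping'' tractable. The paper does not work with individual maps $S^{2n-1}\to\UFP(1)$; it uses the block-diagonal embedding $\phi\colon\U_L\to\UFP(1)$ of the group of $L\times L$ unitary matrices over $\ell^\infty(\Z,\C)$ (from the proof of Theorem 1.2 of \cite{KKT1}), for which $\phi_*$ on $\pi_{2i-1}$ is the quotient map $\ell^\infty(\Z,\Z)\to\ell^\infty(\Z,\Z)_S$, together with a regrouping isomorphism $g\colon\U_1(C^*_u(|\Z|))\to\U_L(C^*_u(|\Z|))$ that turns the composite $g\circ\iota\circ\phi$ into the standard diagonal inclusion $\U_L\to\U_L(C^*_u(|\Z|))$, whose effect on $\pi_{2i-1}$ is already computed in \cite{KKT2} to be the same quotient map. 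Comparing the two quotient maps forces $\iota_*$ to be an isomorphism in degrees $2i-1$ for $i\le L$, and one lets $L\to\infty$. Without some such mechanism that reduces the comparison to statements already proved in \cite{KKT1,KKT2}, your step ``compute the $K$-theoretic invariants on the explicit unitaries and match them'' is exactly the hard part, not a routine verification; your sanity check against $\widehat{\UFP}$ only pins down the image of the rational diagonal $\Q\subset\ell^\infty(\Z,\Z)_S$, which is far from generating.

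The proposed shortcut via Karoubi density does not work, for two reasons. First, the algebra of finite propagation operators is \emph{not} spectrally invariant (inverse-closed) in $C^*_u(|\Z|)$: an invertible band matrix generically has an inverse whose off-diagonal entries merely decay, so the hypothesis of Karoubi's density theorem fails. (That unitaries of finite propagation have finite propagation inverses is not enough; spectral invariance is a statement about general invertibles in matrix algebras over the unitization.) Second, even granting an isomorphism $K_*(\mathcal{A})\cong K_*(C^*_u(|\Z|))$, $K_1$ only controls $\lim_n\pi_0(\U_n)$; to convert that into a statement about all odd homotopy groups of the unstabilized unitary group $\UFP(1)$ with its inductive limit topology you would need a Bott periodicity statement for the incomplete algebra, which in this paper is \emph{deduced from} Lemma \ref{inclusion odd} (via Theorem \ref{inclusion homotopy equiv} and Theorem \ref{Bott periodicity UFP}), so the shortcut is circular as well as unsupported.
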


  \begin{proof}
    Let $\ell^\infty(\Z,\C)$ be the set of
    bounded sequences of complex numbers indexed by $\mathbb{Z}$, and let $\U_L$ denote the space of $L\times L$ unitary matrices with entries in $\ell^\infty(\Z,\C)$.
    More generally, for an algebra $A$, let
    $U_L(A)$ be the space of $L\times L$ unitary matrices with entries in $A$.
    As in \cite[Proof of Theorem 1.1]{KKT1}, there is a block diagonal embedding $\phi\colon\U_L\to\UFP(1)$, so that for $1\le i\le L$, the induced map \[
      \phi_*\colon\pi_{2i-1}(\U_L)\to\pi_{2i-1}(\UFP(1))
    \]
    is identified with the quotient map $\ell^\infty(\Z,\Z)\to\ell^\infty(\Z,\Z)_S$.

    Define a map $g\colon\U_1(C^*_u(|\Z|))\to\U_L(C^*_u(|\Z|))$ by $(u_{ij})_{i,j\in\Z}$ by $g((u_{ij})_{i,j\in\Z})=(U_{ij})_{i,j\in\Z}$ where
    \[
      U_{ij}=
      \begin{pmatrix}
        u_{Li,Lj}&u_{Li,Lj+1}&\cdots&u_{Li,Lj+L-1}\\
        u_{Li+1,Lj}&u_{Li+1,Lj+1}&\cdots&u_{Li+1,Lj+L-1}\\
        \vdots&\vdots&&\vdots\\
        u_{Li+L-1,Lj}&u_{Li+L-1,Lj+1}&\cdots&u_{Li+L-1,Lj+L-1}
      \end{pmatrix}.
    \]
    Then the composition
    \[
      \bar{\phi}\colon\U_L\xrightarrow{\phi}\UFP(1)\xrightarrow{\iota}\U_1(C^*_u(|\Z|))\xrightarrow{g}\U_L(C^*_u(|\Z|))
    \]
    coincides with the map $\U_L\to\U_L(C^*_u(|\Z|))$ induced from the diagonal inclusion, where $\iota$ denotes the inclusion. By \cite[Proposition 3.2]{KKT2} and \cite[Proof of Proposition 6.1]{KKT2}, the induced map of this composition
    \[
      \bar{\phi}_*\colon\pi_{2i-1}(\U_L)\to\pi_{2i-1}(\U_L(C^*_u(|\Z|)))
    \]
    for $1\le i\le L$ is also identified with the quotient map $\ell^\infty(\Z,\Z)\to\ell^\infty(\Z,\Z)_S$. Thus we obtain that the map $\iota\colon\pi_{2i-1}(\UFP(1))\to\pi_{2i-1}(\U_1(C^*_u(|\Z|)))$ is an isomorphism for $1\le i\le L$. The proof is complete as we let $L\to\infty$.
  \end{proof}

  \begin{lemma}
    \label{inclusion even}
    The inclusion $\UFP(1)\to\U(C^*_u(|\Z|))$ is an isomorphism in even homotopy groups.
  \end{lemma}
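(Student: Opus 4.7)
The plan is to pass through the space $\mathrm{Fred}$ of Fredholm operators on $\ell^2(\Z_{\ge 0},\C)$ via the compression $U\mapsto P_+UP_+$, where $P_+$ denotes the orthogonal projection of $\ell^2(\Z,\C)$ onto $\ell^2(\Z_{\ge 0},\C)$. For $U\in\UFP(1)$ of propagation at most $L$ the commutator $[P_+,U]$ has rank at most $2L$, and for $U\in\U(C^*_u(|\Z|))$ the commutator $[P_+,U]$ is a norm limit of such finite rank operators and is therefore compact. In either case $P_+UP_+$ is Fredholm, and one obtains continuous maps
\[
  \mathrm{Comp}\colon\UFP(1)\to\mathrm{Fred},\qquad \mathrm{Comp}'\colon\U(C^*_u(|\Z|))\to\mathrm{Fred}
\]
satisfying $\mathrm{Comp}=\mathrm{Comp}'\circ\iota$. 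Since $\mathrm{Fred}\simeq\Z\times B\U(\infty)$ has $\pi_{2n}(\mathrm{Fred})=\Z$, it is enough to show that both $\mathrm{Comp}$ and $\mathrm{Comp}'$ are isomorphisms on even homotopy groups, for then the commutative triangle forces $\iota_*$ to be iso on $\pi_{2n}$.

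For $\mathrm{Comp}$, I would factor it through the restricted unitary group as $\UFP(1)\hookrightarrow\Ures\to\mathrm{Fred}$. The first arrow is iso on even homotopy by Proposition \ref{inclusion Ures}, and the second is the classical homotopy equivalence between $\Ures\simeq\Z\times B\U(\infty)$ and $\mathrm{Fred}$ from \cite{PS}. Hence $\mathrm{Comp}$ is iso on even homotopy. For $\mathrm{Comp}'$, I would first check the case $n=0$: by the same generator as in Proposition \ref{pi_0}, $\pi_0(\U(C^*_u(|\Z|)))=\Z$ is generated by the shift $S\in C^*_u(|\Z|)$, and $P_+SP_+$ is the unilateral shift with Fredholm index $-1$, a generator of $\pi_0(\mathrm{Fred})=\Z$. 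For $n\ge 1$, the map $\mathrm{Comp}'$ is, up to homotopy, induced by the $C^*$-algebra homomorphism $C^*_u(|\Z|)\to\mathcal{B}(\ell^2(\Z_{\ge 0},\C))/\mathcal{K}$, so it is natural with respect to Bott periodicity. Combining the Bott periodicity of $\U(C^*_u(|\Z|))$ from Theorem \ref{Bott periodicity} with the classical Bott periodicity of $\mathrm{Fred}$, the $\pi_0$-isomorphism propagates to an isomorphism on all $\pi_{2n}$.

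The main obstacle is verifying the naturality of $\mathrm{Comp}'$ with respect to the two Bott periodicities, equivalently identifying $\mathrm{Comp}'$ up to homotopy with the projection onto the $\Z\times B\U(\infty)$ factor appearing in Theorem \ref{homotopy type Roe}. Although this follows formally from the $K$-theoretic picture, making the identification fully rigorous requires matching the Bott isomorphism of Theorem \ref{Bott periodicity} as constructed in \cite{KKT2} with the classical Bott isomorphism of $\mathrm{Fred}$ via the compression. This matching is implicit in the construction of the homotopy equivalence of Theorem \ref{homotopy type Roe} and should be extractable from \cite{KKT2}.
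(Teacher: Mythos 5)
Your overall strategy --- comparing both groups through a map to a space homotopy equivalent to $\Z\times B\U(\infty)$ (for you, the Fredholm operators via the compression $U\mapsto P_+UP_+$; for the paper, the quotient $\Ures/G$ by a contractible Kuiper subgroup $G$) --- is the same idea the paper uses. But as written your argument is circular. To show that $\mathrm{Comp}$ is an isomorphism on even homotopy groups you invoke Proposition \ref{inclusion Ures}, i.e.\ that $\UFP(1)\to\Ures$ is an isomorphism on even homotopy groups. In this paper that proposition is \emph{deduced from} Theorem \ref{inclusion homotopy equiv}, which in turn rests on the very Lemma \ref{inclusion even} you are trying to prove. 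The needed input must instead be taken from the earlier work: the fact that the composite $\UFP(1)\to\Ures\to\Ures/G\simeq\Z\times B\U(\infty)$ is an isomorphism on even homotopy groups is \cite[Lemma 5.5 and proof of Theorem 1.2]{KKT1}, and that is exactly what the paper cites. With that substitution the first half of your argument is sound.

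The second half, for $\mathrm{Comp}'$, is also incomplete, and you flag the gap yourself: you verify the map on $\pi_0$ and then want to propagate the isomorphism to all $\pi_{2n}$ by matching the Bott periodicity of $\U(C^*_u(|\Z|))$ (Theorem \ref{Bott periodicity}, which is legitimately available since it is proved in \cite{KKT2} independently of this lemma) with the classical Bott periodicity of the Fredholm picture. That compatibility is precisely the nontrivial point, and leaving it as ``should be extractable from \cite{KKT2}'' is not a proof. The paper sidesteps this entirely by citing \cite[Lemma 6.2]{KKT2}, which states directly that $\U(C^*_u(|\Z|))\to\Ures$ is an isomorphism on even homotopy groups. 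If you replace your two unproved inputs by the two citations \cite[Lemma 5.5]{KKT1} and \cite[Lemma 6.2]{KKT2}, your argument becomes essentially identical to the paper's proof; without them, both halves of the triangle comparison are unsupported.
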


  \begin{proof}
    Let $G$ be the space of unitary operators $U=(U_{ij})_{i,j\in\Z}$ such that
     $U_{ij}=0$ if $(i,j) \ne (0,0)$
     and the multiple $i \cdot j \leq 0$ holds.
     Namely
     $i \geq 0$ and $j <0$, or $i <0$ and $j \geq 0$ holds.
     Then $G$ is a subgroup of $\Ures$ and contractible by Kuiper's theorem \cite{K}. In \cite[Proof of Theorem 1.2]{KKT1}, it is shown that there is a homotopy commutative diagram
    \[
      \xymatrix{
        \UFP(1) \ar[rr] \ar[d]
        & & Z\times B\U(\infty) \ar[d]^-{\simeq}\\
        \U(C^*_u(|\Z|)) \ar[r]
        & \Ures \ar[r]^-\simeq
        & \Ures/G
      }
    \]
    By \cite[Lemma 5.5]{KKT1} and \cite[Lemma 6.2]{KKT2}, the horizontal arrows are isomorphisms in even homotopy groups, completing the proof.
  \end{proof}

  Now we are ready to prove:

  \begin{theorem}
    \label{inclusion homotopy equiv}
    The inclusion $\UFP(1)\to\U(C^*_u(|\Z|))$ is a homotopy equivalence.
  \end{theorem}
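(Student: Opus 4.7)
The plan is to combine the two preceding lemmas with Whitehead's theorem. Lemmas \ref{inclusion odd} and \ref{inclusion even} together say that the inclusion $\iota \colon \UFP(1) \to \U(C^*_u(|\Z|))$ induces an isomorphism on $\pi_n$ for every $n \ge 0$, i.e.\ $\iota$ is a weak homotopy equivalence. To upgrade this to a genuine homotopy equivalence, it suffices to know that both $\UFP(1)$ and $\U(C^*_u(|\Z|))$ have the homotopy type of a CW complex.

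The homotopy types of both sides have already been identified in Theorems \ref{homotopy type UFP} and \ref{homotopy type Roe} as
\[
  \Z \times B\U(\infty) \times \overset{\circ}{\prod_{n\ge 1}} K(\ell^\infty(\Z,\Z)_S, 2n-1),
\]
which is plainly of CW type: $\Z$ is discrete, $B\U(\infty)$ is a CW complex (classifying space of a nice topological group), each Eilenberg--MacLane factor $K(\ell^\infty(\Z,\Z)_S, 2n-1)$ admits a CW model, and the weak product (a countable sequential colimit of finite products of CW complexes along closed inclusions) inherits a CW structure. Hence the first step of the argument is simply to record that both ambient spaces are of CW homotopy type.

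Once this is in hand, Whitehead's theorem applies: a weak equivalence between spaces of CW homotopy type is a homotopy equivalence. Concretely, one chooses CW models together with homotopy equivalences onto $\UFP(1)$ and $\U(C^*_u(|\Z|))$, compares $\iota$ with the induced map of CW models (which is a weak equivalence by the two lemmas combined with commutativity up to homotopy), and invokes Whitehead. This yields the theorem directly.

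The main conceptual obstacle is nothing beyond verifying the CW-type hypothesis, since the hard homotopical work has already been carried out in Lemmas \ref{inclusion odd} and \ref{inclusion even}. In practice the only subtlety is ensuring that the weak-product factor $\overset{\circ}{\prod}_{n\ge 1} K(\ell^\infty(\Z,\Z)_S, 2n-1)$ is genuinely of CW type, which follows from the general fact that a sequential colimit of CW complexes along cellular closed inclusions is a CW complex; this is covered by the standard construction used in \cite{KKT1,KKT2}. With that in place, the proof reduces to one line citing Whitehead's theorem.
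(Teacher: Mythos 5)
Your proposal is correct and follows essentially the same route as the paper: both arguments combine Lemmas \ref{inclusion odd} and \ref{inclusion even} with the J.H.C.\ Whitehead theorem, using the fact that $\UFP(1)$ and $\U(C^*_u(|\Z|))$ are homotopy equivalent to CW complexes (which, as you note, is already supplied by Theorems \ref{homotopy type UFP} and \ref{homotopy type Roe}).
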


  \begin{proof}
    Both $\UFP(1)$ and $\U(C^*_u(|\Z|))$ are homotopy equivalent to CW complexes.   These facts follow from
    Theorem \ref{homotopy type UFP} and
    Theorem \ref{homotopy type Roe}.
    Hence, the proof is done by Lemmas \ref{inclusion odd} and \ref{inclusion even} together with the J.H.C. Whitehead theorem.
  \end{proof}

  We prove Proposition \ref{inclusion Ures} as a corollary of Theorem \ref{inclusion homotopy equiv}.

  \begin{proof}
    [Proof of Proposition \ref{inclusion Ures}]
    Clearly, the inclusion $\UFP(1)\to\Ures$ factors as the composition of inclusions $\UFP(1)\to\U(C^*_u(|\Z|))\to\Ures$. By \cite[Lemma 6.2]{KKT2}, the inclusion $\U(C^*_u(|\Z|))\to\Ures$ is an isomorphism in even homotopy groups. Then the proof is done by Theorem \ref{inclusion homotopy equiv}.
  \end{proof}

  We are ready to prove the Bott periodicity of $\UFP(1)$.

  \begin{theorem}
    \label{Bott periodicity UFP}
    There is a homotopy equivalence
    \[
      B\UFP(1)\simeq\Omega\UFP(1).
    \]
  \end{theorem}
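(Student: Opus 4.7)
The plan is to reduce the Bott periodicity for $\UFP(1)$ to the already-established Bott periodicity for the uniform Roe algebra version (Theorem \ref{Bott periodicity}), by transporting along the homotopy equivalence of Theorem \ref{inclusion homotopy equiv}.

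First I would observe that the inclusion $\iota\colon\UFP(1)\to\U(C^*_u(|\Z|))$ is a continuous homomorphism of topological groups, and by Theorem \ref{inclusion homotopy equiv} it is a homotopy equivalence. Since both groups are homotopy equivalent to CW complexes (which is already invoked in the proof of Theorem \ref{inclusion homotopy equiv}), $\iota$ induces a homotopy equivalence of classifying spaces
\[
  B\iota\colon B\UFP(1)\xrightarrow{\simeq}B\U(C^*_u(|\Z|))
\]
and a homotopy equivalence of loop spaces
\[
  \Omega\iota\colon\Omega\UFP(1)\xrightarrow{\simeq}\Omega\U(C^*_u(|\Z|)).
\]

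Then I would chain these with Theorem \ref{Bott periodicity} to obtain
\[
  B\UFP(1)\xrightarrow{\simeq}B\U(C^*_u(|\Z|))\xrightarrow{\simeq}\Omega\U(C^*_u(|\Z|))\xleftarrow{\simeq}\Omega\UFP(1),
\]
which gives the desired homotopy equivalence $B\UFP(1)\simeq\Omega\UFP(1)$.

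The only nontrivial point, and thus the main potential obstacle, is the assertion that a homotopy equivalence between topological groups (or between their underlying $A_\infty$-spaces) induces a homotopy equivalence on classifying spaces. For well-pointed topological groups of the homotopy type of CW complexes this is standard, and the CW homotopy type was used precisely this way in the proof of Theorem \ref{inclusion homotopy equiv}; alternatively one can appeal to the universal principal bundle over $B\U(C^*_u(|\Z|))$, pull it back along $B\iota$, and use that the total space is contractible together with a five lemma argument on the long exact sequences of homotopy groups to conclude that $B\iota$ is a weak equivalence, then invoke the Whitehead theorem.
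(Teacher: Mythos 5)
Your proposal is correct and follows essentially the same route as the paper: the paper also uses the fact that the inclusion $\UFP(1)\to\U(C^*_u(|\Z|))$ is a homomorphism to obtain $B\UFP(1)\to B\U(C^*_u(|\Z|))$, concludes it is a homotopy equivalence via Theorem \ref{inclusion homotopy equiv} and the Whitehead theorem, and then chains $B\UFP(1)\simeq B\U(C^*_u(|\Z|))\simeq\Omega\U(C^*_u(|\Z|))\simeq\Omega\UFP(1)$ using Theorem \ref{Bott periodicity}. Your additional remarks on why $B\iota$ is an equivalence only make explicit what the paper leaves implicit.
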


  \begin{proof}
    Since the inclusion $\UFP(1)\to\U(C^*_u(|\Z|))$ is a homomorphism, it induces a map between classifying spaces $B\UFP(1)\to B\U(C^*_u(|\Z|))$. By Theorem \ref{inclusion homotopy equiv}, the induced map is an isomorphism in homotopy groups, hence a homotopy equivalence by the J.H.C. Whitehead theorem. Thus by Theorem \ref{Bott periodicity}
    \[
      B\UFP(1)\simeq B\U(C^*_u(|\Z|))\simeq\Omega\U(C^*_u(|\Z|))\simeq\Omega\UFP(1)
    \]
    and the proof is complete.
  \end{proof}

  By Theorems \ref{homotopy type UFP} and \ref{Bott periodicity UFP} together with the Bott periodicity of $\U(\infty)$, we obtain:

  \begin{corollary}
    There is a homotopy equivalence
    \[
      B\UFP(1)\simeq\U(\infty)\times\overset{\circ}{\prod_{n\ge 1}}K(\ell^\infty(\Z,\Z)_S,2n).
    \]
  \end{corollary}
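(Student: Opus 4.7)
The plan is to chain three homotopy equivalences: Theorem \ref{Bott periodicity UFP}, Theorem \ref{homotopy type UFP}, and the classical Bott periodicity of $\U(\infty)$. The overall strategy is to replace $B\UFP(1)$ by $\Omega\UFP(1)$, substitute the explicit product decomposition of $\UFP(1)$, and then loop factor by factor.

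Concretely, I would first invoke Theorem \ref{Bott periodicity UFP} to rewrite $B\UFP(1)\simeq\Omega\UFP(1)$, then substitute the decomposition from Theorem \ref{homotopy type UFP} and distribute the based loop space over the resulting product. This gives
\[
B\UFP(1)\simeq\Omega(\Z\times B\U(\infty))\times\overset{\circ}{\prod_{n\ge 1}}\Omega K(\ell^\infty(\Z,\Z)_S,2n-1).
\]
The first factor simplifies via the classical Bott periodicity of $\U(\infty)$, which asserts precisely that $\Omega(\Z\times B\U(\infty))\simeq\U(\infty)$; this is the exact place where Bott for $\U(\infty)$ enters the argument. For each of the remaining factors, the Eilenberg--MacLane identity $\Omega K(A,m)\simeq K(A,m-1)$ gives $\Omega K(\ell^\infty(\Z,\Z)_S,2n-1)\simeq K(\ell^\infty(\Z,\Z)_S,2n-2)$, and after re-indexing the weak product so that it runs over $n\ge 1$ with terms $K(\ell^\infty(\Z,\Z)_S,2n)$, the stated formula follows.

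The main technical point requiring care is the commutation of the based loop space functor with the weak product $\overset{\circ}{\prod}$. This holds because, by definition, the weak product is the sequential colimit of its finite subproducts along pointed closed inclusions of CW complexes, and $\Omega$ commutes with both finite products and such filtered colimits in the CW setting. Once this commutation is in place, the remainder of the argument is purely formal: it is nothing more than the assembly of the two Bott periodicities (for $\UFP(1)$ and for $\U(\infty)$) together with the standard relation $\Omega K(A,m)\simeq K(A,m-1)$, with no further input needed.
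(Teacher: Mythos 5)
Your strategy coincides with the paper's: the corollary is stated there with the one\nobreakdash-line justification ``by Theorems \ref{homotopy type UFP} and \ref{Bott periodicity UFP} together with the Bott periodicity of $\U(\infty)$'', which is precisely the chain of equivalences you carry out, and your points about commuting $\Omega$ with the weak product and about $\Omega(\Z\times B\U(\infty))\simeq\Omega B\U(\infty)\simeq\U(\infty)$ are fine.

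There is, however, one step that does not work as written: the ``re-indexing''. Looping the weak product gives
$\prod_{n\ge1}^\circ\Omega K(\ell^\infty(\Z,\Z)_S,2n-1)\simeq K(\ell^\infty(\Z,\Z)_S,0)\times\prod_{n\ge1}^\circ K(\ell^\infty(\Z,\Z)_S,2n)$,
and the $n=1$ term $\Omega K(\ell^\infty(\Z,\Z)_S,1)\simeq K(\ell^\infty(\Z,\Z)_S,0)$ is the (uncountable, nontrivial) discrete group $\ell^\infty(\Z,\Z)_S$, not a point; a shift of index cannot make it disappear. Equivalently, $\pi_0(\Omega\UFP(1))=\pi_1(\UFP(1))\cong\ell^\infty(\Z,\Z)_S\ne0$, whereas $B\UFP(1)$ is connected, so Theorem \ref{Bott periodicity UFP} cannot be fed into your computation verbatim: the equivalence there must be read as identifying $B\UFP(1)$ with the basepoint component $\Omega_0\UFP(1)$, i.e.\ $\Omega\UFP(1)\simeq\ell^\infty(\Z,\Z)_S\times B\UFP(1)$, in exact analogy with $\Omega\U(\infty)\simeq\Z\times B\U(\infty)$. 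Once you pass to the basepoint component, the discrete factor $K(\ell^\infty(\Z,\Z)_S,0)$ is exactly what gets discarded, and the rest of your argument goes through unchanged. You should make this step explicit rather than absorbing it into the re-indexing.
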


  By this corollary, we can determine the rational cohomology of the classifying space $B\UFP(1)$. Let $\ell^\infty(\Z,\Z)_S^\vee=\mathrm{Hom}(\ell^\infty(\Z,\Z)_S,\Q)$, and take a basis $\mathcal{B}$ of it.

  \begin{corollary}
    \label{cohomology U(1)}
    The rational cohomology of $B\UFP(1)$ is given by
    \[
      H^*(B\UFP(1);\Q)=\Q[\alpha_n(b)\mid b\in\mathcal{B},\,n\ge 1]\otimes\Lambda(\beta_{n}\mid n\ge 1)
    \]
    where $|\alpha_n(b)|=2n$ and $|\beta_n|=2n-1$.
  \end{corollary}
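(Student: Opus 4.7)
The plan is to apply the K\"unneth theorem to the homotopy decomposition
\[
  B\UFP(1)\simeq\U(\infty)\times\overset{\circ}{\prod_{n\ge 1}}K(\ell^\infty(\Z,\Z)_S,2n)
\]
from the preceding corollary, which reduces the problem to computing the rational cohomology of each factor and assembling them. The classical computation gives $H^*(\U(\infty);\Q)=\Lambda(\beta_n\mid n\ge 1)$ with $|\beta_n|=2n-1$, which provides the exterior factor in the statement and defines the classes $\beta_n$.

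For each $n\ge 1$, Proposition~\ref{Q-vector space} shows that $V:=\ell^\infty(\Z,\Z)_S$ is a $\Q$-vector space, so I would invoke the rational refinement of Serre's calculation of the cohomology of Eilenberg--MacLane spaces to obtain
\[
  H^*(K(V,2n);\Q)=\mathrm{Sym}(V^\vee[2n]),
\]
where $V^\vee[2n]$ denotes the graded vector space $V^\vee$ concentrated in degree $2n$. I would derive this by writing $V$ as a filtered colimit of finite-dimensional subspaces $V_\alpha$, using the corresponding filtered colimit presentation $K(V,2n)=\mathrm{colim}_\alpha K(V_\alpha,2n)$, and passing to rational cohomology. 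The finite-dimensional case reduces, via the K\"unneth theorem, to Serre's computation $H^*(K(\Q,2n);\Q)=\Q[x_{2n}]$. Since the generators lie in the even degree $2n$, the symmetric algebra on $V^\vee[2n]$ is simply a polynomial algebra on any basis of $V^\vee$; choosing the basis $\mathcal{B}$ of $\ell^\infty(\Z,\Z)_S^\vee$ then produces the classes $\alpha_n(b)$ of degree $2n$.

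Finally, to combine the factors, I would apply the K\"unneth theorem to the weak product. Rational cohomology turns the colimit defining the weak product into an inverse limit of tensor products, and in each fixed cohomological degree the inverse system stabilizes at a finite stage because $K(V,2n)$ is $(2n-1)$-connected, so only finitely many factors contribute. Tensoring the resulting polynomial algebras across $n\ge 1$ with the exterior algebra coming from $\U(\infty)$ yields the desired identification
\[
  H^*(B\UFP(1);\Q)=\Q[\alpha_n(b)\mid b\in\mathcal{B},\,n\ge 1]\otimes\Lambda(\beta_n\mid n\ge 1).
\]

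The main technical obstacle is the Eilenberg--MacLane computation for the infinite-dimensional $\Q$-vector space $\ell^\infty(\Z,\Z)_S$: one must verify that the colimit/limit arguments produce the (uncountably-indexed) polynomial algebra on a basis of the \emph{dual} $\ell^\infty(\Z,\Z)_S^\vee$ rather than on a basis of $\ell^\infty(\Z,\Z)_S$ itself. This reflects the phenomenon $H^{2n}(K(\bigoplus_{i\in I}\Q,2n);\Q)=\prod_{i\in I}\Q$, via the universal coefficient theorem applied to the homology $H_{2n}(K(V,2n);\Z)=V$, and it accounts for the appearance of the large basis $\mathcal{B}$ in the final formula.
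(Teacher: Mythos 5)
Your approach is the same as the paper's: the paper deduces this corollary in a single line from the homotopy decomposition $B\UFP(1)\simeq\U(\infty)\times\overset{\circ}{\prod_{n\ge1}}K(\ell^\infty(\Z,\Z)_S,2n)$ established just before it, and you are supplying the K\"unneth and Eilenberg--MacLane computations that the paper leaves implicit. The exterior factor $H^*(\U(\infty);\Q)=\Lambda(\beta_n)$ and the connectivity argument showing that only finitely many factors of the weak product contribute in each degree are both fine.

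The one step that does not hold as written is the identification $H^*(K(V,2n);\Q)=\mathrm{Sym}(V^\vee[2n])$ for the infinite-dimensional space $V=\ell^\infty(\Z,\Z)_S$. Writing $V=\mathrm{colim}_\alpha V_\alpha$ over finite-dimensional subspaces and using that rational homology commutes with filtered colimits gives $H_*(K(V,2n);\Q)\cong\mathrm{Sym}(V[2n])$ as a Hopf algebra; dualizing then yields $H^{2nk}(K(V,2n);\Q)\cong(\mathrm{Sym}^kV)^\vee$, not $\mathrm{Sym}^k(V^\vee)$. These agree for $k=1$, which is all that your universal-coefficient remark at the end actually establishes, but for $k\ge2$ the natural map $\mathrm{Sym}^k(V^\vee)\to(\mathrm{Sym}^kV)^\vee$ is injective and not surjective when $\dim V=\infty$: a general symmetric bilinear form on $V$ need not be a finite sum of products of linear functionals. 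Hence $\Q[\alpha_n(b)\mid b\in\mathcal{B}]$ is only the subalgebra of $H^*(K(V,2n);\Q)$ generated by the degree-$2n$ classes, not the whole ring, and the analogous caveat applies to the cohomology K\"unneth map when neither factor has finite type. To be fair, this imprecision is already present in the paper's own statement (and is harmless for the applications, which only evaluate the classes $\alpha_n(E;a)$ on individual elements $a\in\ell^\infty(\Z,\Z)_S^\vee$), but your proof should either present the answer as the subalgebra generated by $H^{2n}$ for $n\ge 1$ together with the $\beta_n$, or replace $\mathrm{Sym}(V^\vee[2n])$ by the graded dual $\mathrm{Sym}(V[2n])^\vee$ throughout.
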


  Let $x(b)\in H^{2n}(B\UFP(1);\Q)$ be the dual of the Hurewicz image of $b\in\mathcal{B}\subset\ell^\infty(\Z,\Z)_S\cong\pi_{2n}(B\UFP(1))$. We choose a generator $\alpha_n(b)$ such that
  \[
    \alpha_n(b)=\frac{1}{(n-1)!}x(b).
  \]
  This is because the image of the Hurewicz map $\pi_{2n}(B\U(\infty))\to H_{2n}(B\U(\infty))$ is divisible by $(n-1)!$. See Corollary \ref{transfer c_n} for the convenience of this choice. We also choose a generator $\beta_n$ to be the suspension of $c_n$ in $H^{2n-1}(\U(\infty))\subset H^{2n-1}(B\UFP(1);\Q)$.

  Now we define characteristic classes of $\ell^2(\Z,\C)$-bundles with ends. We will see in Section \ref{Pushforward} that these characteristic classes give information about ends of Hilbert bundles.

  \begin{definition}
    Let $E\to X$ be an $\ell^2(\Z,\C)$-bundle with end, where $X$ is a CW complex, and let $a\in\ell^\infty(\Z,\Z)_S^\vee$ such that $a=a_1b_1+\cdots+a_kb_k$ for $a_i\in\Q$ and $b_i\in\mathcal{B}$. We define
    \[
      \alpha_n(E;a)=a_1f^*(\alpha_n(b_1))+\cdots+a_kf^*(\alpha_n(b_k))\quad\text{and}\quad \beta_n(E)=f^*(\beta_n)\in H^*(X;\Q)
    \]
    where $f\colon X\to B\UFP(1)$ is a classifying map of $E$.
  \end{definition}

  Remarks on the characteristic classes $\alpha_n(E;a)$ and $\beta_n(E)$ are in order. The characteristic classes $\alpha_n(E;a)$ are new invariants of $\ell^2(\Z,\C)$-bundles, and independent from the choice of basis $\mathcal{B}$ of $\ell^\infty(\Z,\Z)_S^\vee$. By Proposition \ref{inclusion Ures}, the characteristic classes $\beta_n(E)$ comes from $B\Ures$, so they give information about ends of $\ell^2(\Z,\C)$-bundle $E$ related to the polarization of $\ell^2(\Z,\C)$.

  We show basic properties of characteristic classes of $\ell^2(\Z,\C)$-bundles with ends, which are immediate from the definition.

  \begin{proposition}
    \label{characteristic class}
    Let $E\to X$ be an $\ell^2(\Z,\C)$-bundle with end.
    \begin{enumerate}
      \item For a map $f\colon Y\to X$ and $a\in\ell^\infty(\Z,\Z)_S^\vee$,
      \[
        \alpha_n(f^{-1}E;a)=f^*(\alpha_n(E;a))\quad\text{and}\quad\beta_n(f^{-1}E)=f^*(\beta_n(E)).
      \]

      \item For $a_1,a_2\in\ell^\infty(\Z,\Z)_S^\vee$,
      \[
        \alpha_n(E;a_1+a_2)=\alpha_n(E;a_1)+\alpha_n(E;a_2).
      \]
    \end{enumerate}
  \end{proposition}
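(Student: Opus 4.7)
The plan is to observe that both assertions are direct consequences of the fact that $\alpha_n(E;a)$ and $\beta_n(E)$ are defined via pullback along a classifying map, combined with the contravariant functoriality and $\Q$-linearity of cohomology pullback; the proof is essentially bookkeeping.

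For part (1), the first step is to pick a classifying map $g\colon X\to B\UFP(1)$ for $E$, so that $g\circ f\colon Y\to B\UFP(1)$ is a classifying map for the pullback bundle $f^{-1}E$. Writing $a=a_1b_1+\cdots+a_kb_k$ in the basis $\mathcal{B}$, the definition together with $(g\circ f)^*=f^*\circ g^*$ and the $\Q$-linearity of $f^*$ yields
\[
\alpha_n(f^{-1}E;a)=\sum_{i=1}^k a_i(g\circ f)^*(\alpha_n(b_i))=f^*\Bigl(\sum_{i=1}^k a_ig^*(\alpha_n(b_i))\Bigr)=f^*(\alpha_n(E;a)).
\]
The statement for $\beta_n$ is the same argument applied to a single fixed class: $\beta_n(f^{-1}E)=(g\circ f)^*(\beta_n)=f^*(g^*(\beta_n))=f^*(\beta_n(E))$.

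For part (2), I would expand $a_1$ and $a_2$ in the common basis $\mathcal{B}$, say $a_1=\sum_i c_ib_i$ and $a_2=\sum_i c_i'b_i$, so that $a_1+a_2=\sum_i(c_i+c_i')b_i$. Applying the definition directly gives
\[
\alpha_n(E;a_1+a_2)=\sum_i(c_i+c_i')g^*(\alpha_n(b_i))=\sum_i c_ig^*(\alpha_n(b_i))+\sum_i c_i'g^*(\alpha_n(b_i))=\alpha_n(E;a_1)+\alpha_n(E;a_2).
\]

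There is essentially no technical obstacle: both claims are purely formal, flowing from the definition and the standard properties of pullback in cohomology. The only point worth confirming along the way is the well-definedness of $\alpha_n(E;a)$ independent of the representation $a=\sum a_ib_i$, but this is automatic because $\mathcal{B}$ is a $\Q$-basis of $\ell^\infty(\Z,\Z)_S^\vee$, so the coefficients $a_i$ are uniquely determined by $a$.
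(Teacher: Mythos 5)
Your proof is correct and is exactly the argument the paper has in mind: the paper states this proposition as ``immediate from the definition'' and gives no written proof, and your bookkeeping with classifying maps, the identity $(g\circ f)^*=f^*\circ g^*$, and $\Q$-linearity of pullback is precisely that omitted verification. The remark on well-definedness via uniqueness of coefficients in the basis $\mathcal{B}$ is a sensible extra check and consistent with the paper's setup.
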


  We also define characteristic classes for $\ell^2(\Z,\C)$-bundles with periodic ends. Let
  \[
    \hat{\alpha}_n=i^*(\alpha_n(a))\quad\text{and}\quad\hat{\beta}_n=y^*(\beta_n)
  \]
  where $i\colon B\widehat{\UFP}\to B\UFP(1)$ is induced from the inclusion and $a\in\ell^\infty(\Z,\Z)_S^\vee$ is the dual of $(\ldots,1,1,1,1,\ldots)\in\ell^\infty(\Z,\Z)_S$.

  \begin{definition}
    Let $E\to X$ be an $\ell^2(\Z,\C)$-bundle with periodic end, where $X$ is a CW complex. We define
    \[
      \hat{\alpha}_n(E)=f^*(\hat{\alpha}_n)\quad\text{and}\quad\hat{\beta}_n(E)=f^*(\hat{\beta}_n)\in H^*(X;\Q)
    \]
    where $f\colon X\to B\widehat{\UFP}$ is a classifying map of $E$.
  \end{definition}

  By Corollary \ref{inclusion periodic}, $\hat{\alpha}_n(E)$ is essentially the only one non-trivial characteristic class for Hilbert bundles with periodic ends in dimension $2n$. The following is immediate from the definition.

  \begin{proposition}
    Let $E\to X$ be an $\ell^2(\Z,\C)$-bundle with periodic end. For a map $f\colon Y\to X$, we have
      \[
        \hat{\alpha}_n(f^{-1}E)=f^*(\hat{\alpha}_n(E))\quad\text{and}\quad\hat{\beta}_n(f^{-1}E)=f^*(\hat{\beta}_n(E)).
      \]
  \end{proposition}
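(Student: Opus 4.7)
The plan is to unravel the definitions and apply naturality of classifying maps. Given a classifying map $g\colon X\to B\widehat{\UFP}$ of the bundle $E$, the key observation I would exploit is that a classifying map of the pullback $f^{-1}E$ is given by the composition $g\circ f\colon Y\to B\widehat{\UFP}$. This is the standard property of classifying spaces: pullback of bundles corresponds to composition of classifying maps, and it applies verbatim here because $B\widehat{\UFP}$ classifies $\ell^2(\Z,\C)$-bundles with periodic end.

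With this identification in place, I would compute both sides of each equality directly from the definition. For the first equality, since $g\circ f$ classifies $f^{-1}E$,
\[
\hat{\alpha}_n(f^{-1}E) = (g\circ f)^*(\hat{\alpha}_n) = f^*(g^*(\hat{\alpha}_n)) = f^*(\hat{\alpha}_n(E)),
\]
using contravariant functoriality of cohomology in the middle step and the definition of $\hat{\alpha}_n(E) = g^*(\hat{\alpha}_n)$ in the last step. The computation for $\hat{\beta}_n$ is identical after replacing $\hat{\alpha}_n$ by $\hat{\beta}_n$ throughout.

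Since the argument is entirely formal once the classifying-map identification $f^{-1}E \leftrightarrow g\circ f$ is in hand, there is no real obstacle to confront. The only point deserving a brief remark is the well-definedness of $\hat{\alpha}_n(E)$ and $\hat{\beta}_n(E)$: different choices of classifying map for $E$ are homotopic, so their induced maps in cohomology agree, and the assignment is independent of the chosen classifying map. This observation is implicit in the definition preceding the proposition and needs no further justification here.
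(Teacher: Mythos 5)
Your argument is correct and is exactly the standard naturality argument that the paper has in mind when it states this proposition is ``immediate from the definition'' (the paper supplies no written proof). The identification of a classifying map of $f^{-1}E$ with $g\circ f$ followed by contravariant functoriality of cohomology is precisely the intended reasoning, so there is nothing to add.
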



  \section{The abelian group $\ell^\infty(\Z,\Z)_S$}\label{The abelian group}

  Features of $\ell^2(\Z,\C)$-bundles with ends can be captured by the abelian group $\ell^\infty(\Z,\Z)_S$ through the homotopy equivalence in Theorem \ref{homotopy type UFP} or the characteristic classes $\alpha_n(E;a)$. This section proves two properties of $\ell^\infty(\Z,\Z)_S$ to help understanding $\ell^2(\Z,\C)$-bundles with ends.

  First of all, by Proposition \ref{Q-vector space}, $\ell^\infty(\Z,\Z)_S$ is a $\Q$-vector space. This is a remarkable property because $\ell^\infty(\Z,\Z)$ is not a $\Q$-vector space at all, and played an important role in determining the homotopy type of $\UFP(1)$ as in Theorem \ref{homotopy type UFP}. Next, we prove that an equivalence class of a sequence $a\in\ell^\infty(\Z,\Z)$ in $\ell^\infty(\Z,\Z)_S$ only depends on an asymptotic information of $a$.

  \begin{proposition}
    \label{finite seq}
    Suppose $(a_i)\in\ell^\infty(\Z,\Z)$ satisfies $a_i\ne 0$ for only finitely many $i$. Then $[(a_i)]=0$ in $\ell^\infty(\Z,\Z)_S$.
  \end{proposition}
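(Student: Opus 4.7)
The plan is to realize $(a_i)$ as $(1-S)(b)$ for an explicit $b \in \ell^\infty(\Z,\Z)$, which would immediately give $[(a_i)] = 0$ in $\ell^\infty(\Z,\Z)_S$ by the definition of the coinvariant quotient. Since $((1-S)b)_i = b_i - b_{i+1}$, the equation to solve is the telescoping recursion $b_{i+1} = b_i - a_i$.

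First I would set $b_0 = 0$ and iterate this recursion in both directions, obtaining
\[
  b_n = -\sum_{k=0}^{n-1} a_k \quad (n \ge 1), \qquad b_{-n} = \sum_{k=-n}^{-1} a_k \quad (n \ge 1).
\]
By construction $(1-S)(b) = (a_i)$, and each $b_i$ is an integer since the $a_k$ are integers.

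The remaining step is to verify that $b \in \ell^\infty(\Z,\Z)$, which is where the finite support hypothesis enters. Choose $N$ with $a_i = 0$ for $|i| > N$; then for $n > N$ the partial sums defining $b_n$ and $b_{-n}$ no longer change, so $b$ is eventually constant in both directions and hence bounded. This shows $(a_i) \in (1-S)\ell^\infty(\Z,\Z)$ and completes the argument.

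There is essentially no obstacle here: the finite support hypothesis is tailored precisely to make the naive antiderivative bounded. For sequences without finite support one cannot expect such a primitive to be bounded, which is exactly why $\ell^\infty(\Z,\Z)_S$ is a large (in fact uncountable-dimensional, by Proposition \ref{Q-vector space}) $\Q$-vector space in general.
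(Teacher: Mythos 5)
Your proof is correct and is essentially the paper's argument in a different packaging: the paper writes $(a_i)$ as a finite linear combination of the indicators $e_m=(1-S)(b(m))$ with $b(m)_i=\mathbf{1}_{i\le m}$, while you sum these up into the explicit discrete antiderivative $b$ (the two primitives differ only by a constant sequence). Both hinge on the same point, namely that the finite support hypothesis makes the primitive eventually constant and hence bounded.
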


  \begin{proof}
    For an integer $m$, define a sequence $b(m)=(b_i)\in\ell^\infty(\Z,\Z)$ by
    \[
      b_i=\begin{cases}1&i\le m\\0&i>m.\end{cases}
    \]
    Then $(1-S)(b(m))$ is a sequence whose entries are 0 but the $m$-th entry 1. Since $(a_i)$ in the statement is a finite linear combination of $(1-S)(b(m))$, the proof is done.
  \end{proof}

  Define the natural inclusion
  \[
    \iota\colon\Q\to\ell^\infty(\Z,\Z)_S
    \]
    as the natural extension of
    $\Z\to\ell^\infty(\Z,\Z)_S$ by
$    a\mapsto(\ldots,a,a,a,\ldots)$.
  Then by Corollary \ref{inclusion periodic}, the map $\iota$ is identified with the map $\pi_{2n-1}(\widehat{\UFP})\to\pi_{2n-1}(\UFP(1))$ induced from the inclusion $\widehat{\UFP}\to\UFP(1)$.

  \begin{proposition}
    \label{1/n}
    For a positive integer $n$, $\iota(\frac{1}{n})$ is represented by
    \[
      (\ldots,0,1,\underbrace{0,\ldots,0}_{n-1},1,\underbrace{0,\ldots,0}_{n-1},1,0,\ldots)\in\ell^\infty(\Z,\Z).
    \]
  \end{proposition}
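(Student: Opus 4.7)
My plan is to exploit the $\Q$-vector space structure of $\ell^\infty(\Z,\Z)_S$ from Proposition \ref{Q-vector space} and reduce the identification of $\iota(1/n)$ to a finite averaging argument over shifts. Denote by $c=(c_i)\in\ell^\infty(\Z,\Z)$ the sequence displayed in the statement, with $c_i=1$ when $n\mid i$ and $c_i=0$ otherwise. Rather than exhibiting a preimage of $[c]$ under $\iota$ directly, I will establish the cleaner equality $n\cdot[c]=\iota(1)$ in $\ell^\infty(\Z,\Z)_S$ and then divide by $n$.

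The first step is to record that $[a]=[S(a)]$ in the coinvariants for every $a$, since $a-S(a)=(1-S)(a)$ lies in the relation subgroup. Iterating gives $[S^k(c)]=[c]$ for all $k\ge 0$. The second step is to form the finite sum
\[
  c+S(c)+S^2(c)+\cdots+S^{n-1}(c)
\]
and observe that its $i$-th entry counts the indices $k\in\{0,1,\ldots,n-1\}$ with $i+k\equiv 0\pmod n$, equivalently $k\equiv -i\pmod n$; exactly one such $k$ exists for every $i$, so this sum equals the constant sequence $(\ldots,1,1,1,\ldots)$, which represents $\iota(1)$.

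Combining the two observations yields $n\cdot[c]=[c+S(c)+\cdots+S^{n-1}(c)]=\iota(1)$ in $\ell^\infty(\Z,\Z)_S$. Since $\ell^\infty(\Z,\Z)_S$ is a $\Q$-vector space by Proposition \ref{Q-vector space} and $\iota$ is $\Q$-linear, multiplication by $n$ is invertible, and we conclude $[c]=\iota(1/n)$. I do not expect a serious obstacle: the argument is a direct combinatorial check combined with a single invocation of the $\Q$-vector space structure. The only point that deserves a moment's bookkeeping is verifying that $\{-k\bmod n\mid 0\le k\le n-1\}$ traverses $\Z/n\Z$ once, which is immediate.
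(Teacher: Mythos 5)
Your proof is correct and is essentially the paper's own argument: both establish $[Sc]=[c]$ in the coinvariants, sum the $n$ shifts of $c$ to get the constant sequence $(\ldots,1,1,1,\ldots)$, deduce $n[c]=\iota(1)$, and divide by $n$ using the $\Q$-vector space structure from Proposition \ref{Q-vector space}. Your additional bookkeeping on the residues $-k \bmod n$ just makes explicit a step the paper leaves to the reader.
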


  \begin{proof}
    For $a\in\ell^\infty(\Z,\Z)$, we have $Sa=a+(1-S)(-a)$, implying $[Sa]=[a]$ in $\ell^\infty(\Z,\Z)_S$. Let $c\in\ell^\infty(\Z,\Z)$ be the sequence in the statement. Then
    \[
      c+Sc+S^2c+\cdots+S^{n-1}c=(\ldots,1,1,1,\ldots),
    \]
    implying $n[c]=[\iota(1)]$ in $\ell^\infty(\Z,\Z)_S$. Thus $[\iota(\frac{1}{n})]=[\frac{1}{n}\iota(1)]=[c]$, completing the proof.
  \end{proof}

  We say that $a\in\ell^\infty(\Z,\Z)_S$ is \emph{periodic} if $S^na=S$ for some integer $n$. By Propositions \ref{finite seq} and \ref{1/n}, $\iota(\Q)\subset\ell^\infty(\Z,\Z)_S$ coincides with the subgroup of $\ell^\infty(\Z,\Z)_S$ consisting of periodic elements. Moreover, given a periodic element $(a_i)\in\ell^\infty(\Z,\Z)_S$, the corresponding rational number is given by the average of the period, or equivalently, its Ces\`{a}ro type average
  \[
    \lim_{n\to\infty}\frac{1}{2n+1}\sum_{i=-n}^na_i.
  \]
  Thus we can say that periodicity of unitary operators of finite propagation is detected by periodicity in $\ell^\infty(\Z,\Z)_S$.


  \section{Pushforward}\label{Pushforward}

  Throughout this section, a vector bundle will mean a complex vector bundle of finite rank. Vector bundles may be described by their sheaves of sections. There is the pullback of sheaves, and the pullback of vector bundles correspond to the pullback of sheaves. There is also the pushforward of sheaves, but some conditions are needed for constructing the pushforward of vector bundles which correspond to the pushforward of sheaves. A typical topological context that we can construct the pushforward of vector bundles is a finite covering. See \cite{S}. The pushforward of vector bundles along finite coverings appear in several contexts of geometry and topology, and are of particular interest. For example, Fulton and MacPherson \cite{FM} gave a beautiful formula for characteristic classes of the pushforwards of vector bundles along finite coverings.

  The pushforward of vector bundles along coverings have been considered only for finite coverings because we have been working only with vector bundles of finite rank. However, if we allow infinite dimensional fibers, we may consider infinite coverings. This section introduces the push- forward of vector bundles along infinite coverings, and show Hilbert bundles with ends naturally appear in this context.


  \subsection{Completed sum}

  We begin with the simplest case, the pushforward along a trivial covering. Let $\{E_i\}_{i\in I}$ be a family of vector bundles over a common base $X$. For each $x\in X$, let $\widehat{E}_x$ be the Hilbert space obtained by completing the direct sum $\bigoplus_{i\in I}(E_i)_x$, where $(E_i)_x$ is the fiber of $E_i$ at $x\in X$. Let
  \[
    \widehat{E}=\coprod_{x\in X}\widehat{E}_x
  \]
  which we call the \emph{completed sum} of $\{E_i\}_{i\in I}$. Note that $\widehat{E}$ has not been topologized yet, though each $E_x$ is a topological space. There is a natural map
  \[
    q\colon\widehat{E}\to X,\quad q(E_x)=x.
  \]
  We look for a condition for $\widehat{E}$ being topologized such that $q\colon\widehat{E}\to X$ is a Hilbert bundle. The following example is convincing.

  \begin{example}
    Let $H\to S^2$ denote the Hopf line bundle, and let $\widehat{E}$ be the completed sum of $\{H^{\otimes n}\}_{n\in\Z}$. Consider an open cover of $S^2$ consisting of $S^2$ minus the north pole and $S^2$ minus the south pole. Then this open cover trivialize the family of line bundles $\{H^{\otimes n}\}_{n\in\Z}$ simultaneously, so that the open cover also trivializes $\widehat{E}$. Thus we can give a topology to $\widehat{E}$ by this local trivialization. Observe that the transition function on the equator $\{z\in\C\mid|z|=1\}$ is given by
    \[
      \diag(\cdots,z^{-2},z^{-1},1,z,z^2,\ldots).
    \]
    Then the transition function is not continuous
    in norm topology,
    because $\{z^n\}_{n\in\Z}$ is not equicontinuous. So $\widehat{E}$ is not a Hilbert bundle by the above natural topology.
  \end{example}

  The example above suggests that simultaneous trivialization of a family of vector bundles gives a locally trivializing topology on the completed sum, and equicontinuity of the transition functions of a family of vector bundles may guarantee that the completed sum equipped with the above topology is a Hilbert bundle. We shall prove this naive observation is true. To this end, we need to define equicontinuity precisely.  Let $X,Y$ be metric spaces. Recall that a family of maps $\{f_i\colon X\to Y\}_{i\in I}$ is equicontinuous if for any $\epsilon>0$, there exists $\delta>0$ such that for each $i\in I$
  \[
    d(f_i(x_1),f_i(x_2))<\epsilon\quad\text{whenever}\quad d(x_1,x_2)<\delta.
  \]

  \begin{proposition}
    \label{completed sum}
    Let $\{E_i\}_{i\in I}$ be a family of vector bundles over a common metric space $X$, and let $\widehat{E}$ denote its completed sum. Suppose there is an open cover $X=\bigcup_{\lambda \in \Lambda} \
    U_{\lambda}$ satisfying:
    \begin{enumerate}
      \item each $U_{\lambda}$ trivializes $E_i$ for $i\in I$ simultaneously;

      \item on each $U_{\lambda}\cap U_{\mu}$, the transition functions of $E_i$ for $i\in I$ are equicontinuous.
    \end{enumerate}
    Then $q\colon\widehat{E}\to X$ is a Hilbert bundle.
  \end{proposition}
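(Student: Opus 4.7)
The plan is to use the simultaneous trivializations from condition (1) to produce local trivializations of $\widehat{E}$, and then verify continuity of the transition functions in the operator norm topology on $\U(H)$ by leveraging the equicontinuity in condition (2). Fix Hermitian metrics on each $E_i$ so that $\widehat{E}_x$ is a genuine Hilbert space. For each $U_k$ with simultaneous trivializations $\varphi_{k,i}\colon E_i|_{U_k}\to U_k\times\C^{r_i}$, the fiberwise isometries $\varphi_{k,x}\colon\widehat{E}_x\to H$, where $H=\widehat{\bigoplus_{i\in I}\C^{r_i}}$, assemble into a bijection $\Phi_k\colon q^{-1}(U_k)\to U_k\times H$. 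I would declare the topology on $\widehat{E}$ to be the one for which a set is open if and only if each $\Phi_k$-image is open; then $q$ is automatically continuous and each $\Phi_k$ is a homeomorphism onto $U_k\times H$.

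The content of the proposition is that these local trivializations are compatible, which reduces to showing that, for each pair $k,l$, the map
\[
  g_{kl}\colon U_k\cap U_l\to\U(H),\qquad g_{kl}(x)=\varphi_{l,x}\circ\varphi_{k,x}^{-1},
\]
is continuous in the operator norm topology. By construction $g_{kl}(x)$ is the completion of the block-diagonal operator $\bigoplus_{i\in I}g_{kl,i}(x)$, where $g_{kl,i}\colon U_k\cap U_l\to\U(r_i)$ is the transition function of $E_i$ on $U_k\cap U_l$.

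The decisive analytic step is the identity
\[
  \|g_{kl}(x_1)-g_{kl}(x_2)\|_{\mathrm{op}}=\sup_{i\in I}\|g_{kl,i}(x_1)-g_{kl,i}(x_2)\|_{\mathrm{op}},
\]
which holds because the difference is itself block diagonal on the orthogonal decomposition $H=\widehat{\bigoplus}_i\C^{r_i}$. Given $\epsilon>0$, the equicontinuity hypothesis in (2) yields $\delta>0$ such that $\|g_{kl,i}(x_1)-g_{kl,i}(x_2)\|_{\mathrm{op}}<\epsilon$ for all $i\in I$ whenever $d(x_1,x_2)<\delta$; taking the supremum over $i$ and invoking the above identity gives $\|g_{kl}(x_1)-g_{kl}(x_2)\|_{\mathrm{op}}<\epsilon$, proving continuity of $g_{kl}$ into $\U(H)$ with the operator norm topology. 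Since the cover is finite and only finitely many overlaps need to be treated, this finishes the verification that $q\colon\widehat{E}\to X$ is a locally trivial Hilbert bundle.

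I expect the main obstacle to be precisely this third step, i.e., capturing why equicontinuity (and not merely pointwise norm continuity of each $g_{kl,i}$) is the correct hypothesis. The Hopf bundle example in the excerpt shows that pointwise continuity of each block is not enough; the supremum in the identity above can fail to tend to zero without the uniform-in-$i$ control, and equicontinuity is exactly what eliminates this obstruction. Once that point is established, the remainder of the argument is a straightforward bookkeeping of local trivializations over a finite cover.
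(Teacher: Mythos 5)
Your proposal is correct and follows essentially the same route as the paper's proof: topologize $\widehat{E}$ via the simultaneous local trivializations, write the transition function of $\widehat{E}$ as the block-diagonal sum $\bigoplus_{i\in I}g_{kl}^i$, and use the identity $\bigl\|\bigoplus_i g_{kl}^i(x)-\bigoplus_i g_{kl}^i(y)\bigr\|\le\sup_i\|g_{kl}^i(x)-g_{kl}^i(y)\|$ together with equicontinuity to get norm continuity. Your version is slightly more explicit about why the block-diagonal norm estimate holds and why pointwise continuity of each block would not suffice, but the argument is the same.
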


  \begin{proof}
    Since $U_{\lambda}$ trivializes $E_i$ for $i\in I$ simultaneously, $q^{-1}(U_{\lambda})$ has a natural topology for each $\lambda \in \Lambda$. Then an open cover $\widehat{E}= \bigcup_{\lambda \in \Lambda} \
    q^{-1}(U_{\lambda})$ defines a topology of $\widehat{E}$ such that each $U_{\lambda}$ gives a trivialization of $\widehat{E}$. Then it remains to prove the transition functions are continuous. Let $g_{\lambda \mu}^i$ be the transition function of $E_i$ on $U_{\lambda}\cap U_{\mu}$. Then the transition function of $\widehat{E}$ on
    $U_{\lambda}\cap U_{\mu}$ is
    \begin{equation}
      \label{transition function}
      \bigoplus_{i\in I}g_{\lambda \mu}^i.
    \end{equation}
    Since $g_{\lambda \mu}^i$ for $i\in I$ is equicontinuous, for any $\epsilon>0$ there is $\delta$ such that $|g_{\lambda \mu }^i(x)-g_{\lambda \mu}^i(y)|<\epsilon$ whenever $|x-y|<\delta$ for $x,y\in U_{\lambda}\cap U_{\mu}$. Then it follows that
    \[
      \left|\bigoplus_{i\in I}g_{\lambda \mu}^i(x)-\bigoplus_{i\in I}g_{\lambda \mu}^i(y)\right|\le\sup\{|g_{\lambda \mu}^i(x)-g_{\lambda \mu}^i(y)|\mid i\in I\}<\epsilon
    \]
    whenever $|x-y|<\delta$ for $x,y\in U_{\lambda}\cap U_{\mu}$. Thus the transition functions of $\widehat{E}$ are continuous, completing the proof.
  \end{proof}

  Assuming an mild condition together with those of Proposition \ref{completed sum}, the completed sum turns out to be a Hilbert bundle with end.

  \begin{theorem}
    \label{completed sum end}
    Let $\{E_i\}_{i\in I}$ be a family of vector bundles over a common metric space $X$ satisfying the conditions in Proposition \ref{completed sum}. Suppose the conditions:
    \begin{enumerate}
      \item the index set $I$ is a countable metric space;

      \item $\sup\{\dim E_i\mid i\in I\}<\infty$.
    \end{enumerate}
    Then $\pi\colon\widehat{E}\to X$ is a Hilbert bundle with end.
  \end{theorem}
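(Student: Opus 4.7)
The plan is to bootstrap Proposition \ref{completed sum}, which already gives $\widehat{E}$ the structure of a Hilbert bundle whose transition functions on $U_k\cap U_l$ are $T_{kl} := \bigoplus_{i\in I} g_{kl}^i$. What remains is to show that, under the additional hypotheses (1) and (2), these $T_{kl}$ are continuous maps into $\UFP(H)$ for an appropriate countable indexing of a basis of the model fibre $H$. I would break this into three steps: (i) choose a metric index set $J$ for a basis of $H$; (ii) bound the propagation of $T_{kl}$ uniformly, using block-diagonality together with $\sup_i \dim E_i < \infty$; and (iii) upgrade equicontinuity to operator-norm continuity into $\UFP_L(H)$ and then pass to the inductive limit topology.

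For (i), set $N := \sup_{i\in I}\dim E_i$ and choose orthonormal bases of each typical fibre $V_i$ of $E_i$; the fibre $H = \widehat{\bigoplus_{i\in I} V_i}$ then carries an orthonormal basis indexed by
\[
  J := \{(i,a)\mid i\in I,\ 1\le a\le \dim V_i\},
\]
which I would metrize by $d_J((i,a),(i',a')) := d_I(i,i') + |a-a'|$. Countability is inherited from $I$ and $N<\infty$, and the triangle inequality from $d_I$ and $|\cdot|$.

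For (ii), block-diagonality forces $(T_{kl}(x))_{(i,a),(i',a')} = 0$ whenever $i\ne i'$, hence $\mathrm{prop}(T_{kl}(x)) \le N-1$ uniformly in $x\in U_k\cap U_l$, so $T_{kl}$ factors through $\UFP_{N-1}(H)$. For (iii), because the blocks act on mutually orthogonal summands, the operator-norm identity
\[
  \|T_{kl}(x) - T_{kl}(y)\| = \sup_{i\in I}\|g_{kl}^i(x) - g_{kl}^i(y)\|
\]
holds, and the right-hand side is controlled by the equicontinuity hypothesis (2) of Proposition \ref{completed sum}. Thus $T_{kl}\colon U_k\cap U_l \to \UFP_{N-1}(H)$ is continuous in operator norm, and composing with the tautologically continuous inclusion $\UFP_{N-1}(H)\hookrightarrow\UFP(H)$ gives continuity into $\UFP(H)$ with its inductive limit topology.

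The step I expect to require the most care is (iii): one has to verify that the equicontinuity hypothesis really controls operator norm uniformly across all $i\in I$, and that passage through the direct sum preserves the single $\delta$--$\epsilon$ pair. Once this is in place, steps (i) and (ii) are essentially bookkeeping, with $N<\infty$ providing exactly the input that converts the bookkeeping into a uniform finite propagation bound.
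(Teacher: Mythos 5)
Your proposal is correct and follows essentially the same route as the paper: the paper's own proof simply invokes Proposition \ref{completed sum} for the Hilbert bundle structure, notes that the block-diagonal form \eqref{transition function} together with hypotheses (1) and (2) forces the transition functions into $\UFP(H)$, and asserts continuity ``quite similarly'' to the earlier proof. Your steps (i)--(iii) merely make explicit the details the paper leaves implicit (the choice of metric on the index set $J$, the uniform propagation bound $N-1$, and the operator-norm identity for block-diagonal differences), all of which check out.
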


  \begin{proof}
    By Proposition \ref{completed sum}, $q\colon\widehat{E}\to X$ is a Hilbert bundle, so it remains to show it has an end, or equivalently, the transition functions are continuous maps into $\UFP(H)$, where $H$ is the typical fiber of $q$. The transition functions are of the form \eqref{transition function}. Then by assumption, the transition functions take values in $\UFP(H)$, and we can show that they are continuous too quite similarly to the proof of Proposition \ref{completed sum}.
  \end{proof}

  We show properties of characteristic classes of $\ell^2(\Z,\C)$-bundles by using completed sums. For an integer $k\in\Z$, let $E(k)\to S^{2n}$ be a vector bundle of rank $2n$ classified by a map $g_k\colon S^{2n}\to B\U(2n)$ representing $k\in\Z\cong\pi_{2n}(B\U(2n))$. For $a=(a_i)\in\ell^\infty(\Z,\Z)$, let $E(a)$ denote the completed sum of a family $\{E(a_i)\}_{i\in\Z}$ of vector bundles over $S^{2n}$, which indexed by a metric space $\Z$. By Theorem \ref{completed sum end}, $E(a)\to S^{2n}$ is an $\ell^2(\Z,\C)$-bundle with end.

  \begin{proposition}
    \label{completed sum c_n}
    Given any $\bar{a}\in\ell^\infty(\Z,\Z)^\vee_S$ and $n\ge 1$, there is an $\ell^2(\Z,\C)$-bundle with end $E\to B$ such that $\alpha_n(E;\bar{a})\ne 0$.
  \end{proposition}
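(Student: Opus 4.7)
The plan is to realise every nonzero class in $\pi_{2n}(B\UFP(1))\cong\ell^\infty(\Z,\Z)_S$ as the classifying class of a completed sum of the form $E(a)\to S^{2n}$, and then use the Hurewicz-duality characterisation of $\alpha_n$ to pair off $\bar a$ against $[a]$. (The case $\bar a=0$ is vacuous, so assume $\bar a\ne 0$.) Thus the whole proof reduces to identifying the class $[E(a)]\in\pi_{2n}(B\UFP(1))$ in terms of the sequence $a$.

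First I would compute the clutching function of $E(a)$. Cover $S^{2n}$ by the two open hemispheres, whose intersection deformation retracts onto $S^{2n-1}$. Each hemisphere is contractible, so every $E(a_i)$ trivialises there, and Theorem \ref{completed sum end} upgrades this to a simultaneous trivialisation of $E(a)$ as an $\ell^2(\Z,\C)$-bundle with end. The clutching function on the equator is
\[
h(s)=\diag(\ldots,h^{a_{-1}}(s),h^{a_0}(s),h^{a_1}(s),\ldots),
\]
where $h^{k}\colon S^{2n-1}\to\U(2n)$ represents $k\in\pi_{2n-1}(\U(2n))\cong\Z$. This map has propagation at most $2n$ and factors as $h=\phi\circ\tilde h$, where $\phi\colon\U_{2n}\to\UFP(1)$ is the block-diagonal embedding appearing in the proof of Lemma \ref{inclusion odd} and $\tilde h(s)=(h^{a_i}(s))_{i\in\Z}\in\U_{2n}$. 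Under the identification $\pi_{2n-1}(\U_{2n})\cong\ell^\infty(\Z,\pi_{2n-1}(\U(2n)))=\ell^\infty(\Z,\Z)$ used there, $[\tilde h]$ corresponds to $a=(a_i)_i$.

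Next I would invoke the key calculation from the proof of Lemma \ref{inclusion odd} (ultimately \cite[Proof of Theorem 1.1]{KKT1}): for $1\le i\le L$ the map $\phi_\ast\colon\pi_{2i-1}(\U_L)\to\pi_{2i-1}(\UFP(1))$ is identified with the quotient $\ell^\infty(\Z,\Z)\to\ell^\infty(\Z,\Z)_S$. Taking $i=n$, $L=2n$ (so $1\le n\le L$), this yields $\phi_\ast[\tilde h]=[a]$, i.e.\ the classifying class of $E(a)$ in $\pi_{2n}(B\UFP(1))$ equals $[a]\in\ell^\infty(\Z,\Z)_S$. Unwinding the definition $\alpha_n(b)=\frac{1}{(n-1)!}x(b)$ (dual to Hurewicz image) and extending $\Q$-linearly in $\bar a$ then gives
\[
\langle\alpha_n(E(a);\bar a),[S^{2n}]\rangle=\frac{1}{(n-1)!}\,\bar a([a]).
\]
To finish, choose any $c\in\ell^\infty(\Z,\Z)_S$ with $\bar a(c)\ne 0$ (possible because $\bar a\ne 0$) and any lift $a\in\ell^\infty(\Z,\Z)$ of $c$; then $E(a)\to S^{2n}$ is the desired bundle.

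The step I expect to be the main obstacle is the identification of $[\tilde h]\in\pi_{2n-1}(\U_{2n})$ with $(a_i)_i\in\ell^\infty(\Z,\Z)$ and the subsequent computation of $\phi_\ast$, because both rely on the explicit form of $\phi$ and of the homotopy equivalence in Theorem \ref{homotopy type UFP} from \cite{KKT1}. Happily these are exactly the identifications used in the proof of Lemma \ref{inclusion odd}, so once the constructions are aligned the step is careful bookkeeping rather than a new argument.
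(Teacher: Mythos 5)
Your proposal is correct and follows essentially the same route as the paper: the paper likewise picks $b$ with $\bar a(b)\ne 0$, forms the completed sum $E(b)\to S^{2n}$, and identifies its clutching function with $[b]\in\ell^\infty(\Z,\Z)_S\cong\pi_{2n-1}(\UFP(1))$ to conclude $\alpha_n(E(b);\bar a)\ne 0$. The paper states this identification in one line, whereas you supply the justification via the block-diagonal embedding $\phi$ and the computation of $\phi_*$ from Lemma \ref{inclusion odd}; that is exactly the intended mechanism.
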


  \begin{proof}
    We take  $b=(b_i)\in\ell^\infty(\Z,\Z)$
    such that $\bar{a}(b) \ne  0 $.
     Consider $E(b)\to S^{2n}$. Then its transition function is $[b]\in\ell^\infty(\Z,\Z)_S\cong\pi_{2n-1}(\UFP(1))$. Thus $\alpha_n(E(b);\bar{a})\ne 0$, completing the proof.
  \end{proof}

  We show interesting properties of the bundle $E(a)\to S^{2n}$.

  \begin{proposition}
    \label{E(a) property}
    \begin{enumerate}
      \item If $a,a'\in\ell^\infty(\Z,\Z)$ differs by only finitely many entries, then
      \[
        E(a)\cong E(a')
      \]
      as Hilbert bundles with ends.

      \item If $a=(a_i)\in\ell^\infty(\Z,\Z)$ satisfies $a_i\ne 0$ only finitely many $i$, then $E(a)$ is trivial as a Hilbert bundle with end.
    \end{enumerate}
  \end{proposition}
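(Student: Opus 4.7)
The plan is to prove both parts by identifying $E(a)\to S^{2n}$ with its classifying class in $\pi_{2n}(B\UFP(1))\cong\pi_{2n-1}(\UFP(1))\cong\ell^\infty(\Z,\Z)_S$ and invoking Proposition~\ref{finite seq}. Concretely, I would set up the clutching picture by writing $S^{2n}=U_+\cup U_-$ as a union of open hemispheres intersecting in an annular neighborhood of the equator $S^{2n-1}$, and picking clutching functions $g_k\colon S^{2n-1}\to\U(2n)$ for each $E(k)$ representing $k\in\pi_{2n-1}(\U(2n))\cong\Z$. Because $a\in\ell^\infty(\Z,\Z)$ is bounded and integer-valued it attains only finitely many values, so $\{g_{a_i}\}_{i\in\Z}$ is a finite collection of continuous maps, hence equicontinuous, and Theorem~\ref{completed sum end} applies and shows that $E(a)$ is a Hilbert bundle with end. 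Under the natural basis reindexing identifying $\ell^2(\Z,\C^{2n})\cong\ell^2(\Z,\C)$, its clutching function is the block-diagonal operator
\[
G_a=\bigoplus_{i\in\Z}g_{a_i}\colon S^{2n-1}\to\UFP(1)
\]
of propagation at most $2n-1$.

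Next I would identify $[G_a]\in\pi_{2n-1}(\UFP(1))$ with $[a]\in\ell^\infty(\Z,\Z)_S$. Via the block-diagonal embedding $\phi\colon\U_L\to\UFP(1)$ (with $L=2n$) recalled in the proof of Lemma~\ref{inclusion odd} and originally from~\cite{KKT1}, the induced map $\phi_*\colon\pi_{2n-1}(\U_L)\cong\ell^\infty(\Z,\Z)\to\pi_{2n-1}(\UFP(1))\cong\ell^\infty(\Z,\Z)_S$ is the canonical quotient sending a sequence to its equivalence class. The clutching function $G_a$ is exactly $\phi\circ\widetilde{\gamma}$ for the map $\widetilde{\gamma}\colon S^{2n-1}\to\U_L$ whose evaluation at position $i\in\Z$ is $g_{a_i}$, and hence $[G_a]=[a]$ in $\ell^\infty(\Z,\Z)_S$.

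With this identification, part (1) follows immediately: if $a$ and $a'$ differ at only finitely many entries, then $a-a'$ has only finitely many nonzero entries, Proposition~\ref{finite seq} gives $[a]=[a']$, and hence $G_a$ and $G_{a'}$ are homotopic in $\UFP(1)$, yielding $E(a)\cong E(a')$ as Hilbert bundles with ends. Part (2) then follows by applying (1) with $a'=0$, since $E(0)$ is manifestly the trivial Hilbert bundle $S^{2n}\times\ell^2(\Z,\C^{2n})$ with end. The main obstacle I expect is the identification $[G_a]=[a]$: matching the geometric block-diagonal clutching construction of the completed sum with both the isomorphism $\pi_{2n-1}(\U_L)\cong\ell^\infty(\Z,\Z)$ and the block-diagonal embedding $\phi$ from~\cite{KKT1} requires careful bookkeeping of indexings, which is the technical heart of the argument.
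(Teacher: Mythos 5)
Your proposal is correct and follows essentially the same route as the paper: the paper's proof likewise identifies the transition function of $E(a)$ with the class $[a]\in\ell^\infty(\Z,\Z)_S\cong\pi_{2n-1}(\UFP(1))$ (as in Proposition \ref{completed sum c_n}) and then concludes both statements from Proposition \ref{finite seq}. You simply spell out the clutching and block-diagonal bookkeeping that the paper leaves implicit.
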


  \begin{proof}
    As in the proof of Proposition \ref{completed sum c_n}, the transition function of $E(a)$ represents $[a]\in\ell^\infty(\Z,\Z)_S\cong\pi_{2n-1}(\UFP(1))$. Then the statements follows from Proposition \ref{finite seq}.
  \end{proof}


  \subsection{Pushforward}

  We define the pushforward of a vector bundle along an infinite covering. For the rest of this section, let $X$ and $Y$ be connected metric spaces, let $E\to X$ be a vector bundle, and let $\pi\colon X\to Y$ be a covering space with a countable fiber $F$. Notice that each fiber of $\pi$ is a countable metric space.

  For each $y\in Y$, let $F_y$ be the Hilbert space obtained by completing the direct sum $\bigoplus_{x\in\pi^{-1}(y)}E_x$, where $E_x$ is the fiber of $E$ at $x\in X$. Let
  \[
    \pi_*E=\coprod_{y\in Y}F_y
  \]
  which we call the \emph{pushforward} of $E$ along $\pi$. Then we get a natural map
  \[
    q\colon\pi_*E\to Y,\quad q(F_y)=y.
  \]
  By definition, if $\pi$ is a trivial covering, then $\pi_*E$ is the completed sum of vector bundles over components of $X$.

  First, we generalize Proposition \ref{completed sum} to pushforwards. Since being a Hilbert bundle is a local property, we obtain the following generalization by translating the conditions of Proposition \ref{completed sum} into the context of pushforwards.

  \begin{proposition}
    \label{pushforward}
    Suppose there is an open cover
     $Y=\bigcup_{\lambda \in \Lambda}
     \ U_{\lambda}$ satisfying:
    \begin{enumerate}
      \item $\pi^{-1}(U_{\lambda})$ is  trivial for each $i$ such that $\pi^{-1}(U_{\lambda})=\coprod_{x\in F}U_{\lambda}^x$ and
      \[
        \pi\colon U_{\lambda}^x\to U_{\lambda}
      \]
      is an isometry for each $x\in F$;

      \item $E$ is trivial on each $\pi^{-1}(U_{\lambda})$;

      \item for each $\lambda, \mu \in \Lambda$, the transition functions of $E$ on $U_{\lambda}^k\cap U_{\mu}^l$ for $k,l\in\Z$ are equicontinuous.
    \end{enumerate}
    Then $q\colon\pi_*E\to Y$ is a Hilbert bundle.
  \end{proposition}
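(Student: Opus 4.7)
The plan is to adapt the argument of Proposition \ref{completed sum} by observing that, locally over each $U_i$, the pushforward $\pi_*E$ is essentially a completed sum of a family of vector bundles over the common base $U_i$. First I would fix an index $i$ and use conditions (1) and (2) to manufacture a local trivialization. Condition (1) decomposes $\pi^{-1}(U_i)$ as $\coprod_{x\in F}U_i^x$ with each $\pi|_{U_i^x}\colon U_i^x\to U_i$ an isometry, which lets us view the family $\{E|_{U_i^x}\}_{x\in F}$ as a family of vector bundles over the common base $U_i$. Condition (2) makes each member of this family trivial of rank $r=\mathrm{rank}\,E$, so that for $y\in U_i$ the fiber of $\pi_*E$ at $y$ admits a canonical identification with $\widehat{\bigoplus_{x\in F}\C^r}=:H$. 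This produces a set-level bijection $q^{-1}(U_i)\cong U_i\times H$, and I would declare each of these bijections to be a homeomorphism, giving $\pi_*E$ its topology.

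Next I would verify that, on an overlap $U_i\cap U_j$, the resulting transition function is a continuous map into $\U(H)$ in the operator norm topology. Following the proof of Proposition \ref{completed sum}, the transition of $\pi_*E$ decomposes as a direct sum of transition functions of $E$ indexed by the nonempty pieces $U_i^k\cap U_j^l$, one summand for each point in a fiber of $\pi$ over $U_i\cap U_j$. Since the operator norm of such a direct sum of unitary matrices is the supremum of the norms of the summands, equicontinuity of these piecewise transitions, which is the content of condition (3) after the natural sheet-relabeling, yields continuity of the transition function by exactly the same $\epsilon$-$\delta$ argument as in Proposition \ref{completed sum}.

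The main technical subtlety I expect to work through is the sheet bookkeeping on $U_i\cap U_j$: the sheet $U_i^k$ of $\pi^{-1}(U_i)$ need not match a single sheet $U_j^k$ of $\pi^{-1}(U_j)$, but rather breaks up into pieces $U_i^k\cap U_j^l$ via a locally constant bijection of $F$ on each connected component of $U_i\cap U_j$. Handling this cleanly requires either passing to connected components of $U_i\cap U_j$, on which the sheet correspondence is a fixed bijection of $F$, or indexing the direct-sum decomposition by the nonempty pieces $U_i^k\cap U_j^l$; once this bookkeeping is in place, the continuity check reduces piece by piece to the completed-sum case, completing the proof.
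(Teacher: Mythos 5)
Your proposal is correct and follows essentially the same route as the paper: conditions (1) and (2) give the local trivializations $q^{-1}(U_i)\cong U_i\times H$, the transition function of $\pi_*E$ on $U_i\cap U_j$ is the direct sum $\bigoplus_{x,y\in F}g_{ij}^{xy}$ of the transition functions of $E$ over the sheets, and condition (3) together with the sup-norm estimate for direct sums gives continuity exactly as in Proposition \ref{completed sum}. Your extra attention to the sheet bookkeeping on overlaps is a sensible refinement of a point the paper's proof passes over silently, but it does not change the argument.
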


  \begin{proof}
    The proof is basically the same as that for Proposition \ref{completed sum}. The conditions (1) and (2) guarantee the local triviality of $q$. Let $g_{\lambda \mu}^{xy}$ be the transition function of $E$ on $U_{\lambda}^x\cap U_{\mu}^y$. Then the transition function of $\pi_*E$ on $U_{\lambda}\cap U_{\mu}$ is given by
    \begin{equation}
      \label{transition function pushforward}
      \bigoplus_{x,y\in F}g_{\lambda \mu }^{xy}.
    \end{equation}
    Then by arguing as in the proof of Proposition \ref{completed sum}, we can see that the condition (3) guarantees that the transition functions of $\pi_*E$ is continuous, completing the proof.
  \end{proof}

  \begin{example}
    Let $G$ be a countable group acting freely on a metric space $X$. If the action is isometric, then it is properly discontinuous, implying the projection $X\to X/G$ is a covering. Moreover, this covering satisfies the condition (1) of Proposition \ref{pushforward}. A typical example of such a group action is the canonical action of $\Z$ on $\R$, where the resulting covering is the universal cover $\R\to S^1$.
  \end{example}

  Next, we generalize Theorem \ref{completed sum end} to pushforwards. Under the conditions of Proposition \ref{pushforward}, $q\colon\pi_*E\to Y$ is a Hilbert bundle whose transition functions are of the form \eqref{transition function pushforward}. Then we need to see when the transition functions \eqref{transition function pushforward} take valued in unitary operators of finite propagation. To this end, we carefully look at the trivialization of $\pi\colon X\to Y$. In Proposition \ref{pushforward}, we use the index $\Z$ for expressing the trivialization of $\pi$ because it is a countable covering. However, the index set is actually the fiber $F$ of $\pi$ at the basepoint of $Y$, which is a countable metric space, and an end of each fiber $F_y$ of $q$ by using a structure of $F$ and a path from $y\in Y$ to the basepoint of $Y$. So if the deck transformations behave badly, then the transition functions \eqref{transition function} do not take values in unitary operators of finite propagation.

  \begin{theorem}
    \label{pushforward end}
    Suppose
  the conditions of Proposition \ref{pushforward},
  and suppose further that $Y$ is compact.
  Then,
  $q\colon\pi_*E\to Y$ is a Hilbert bundle with end.
  \end{theorem}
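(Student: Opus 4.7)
The plan is to build on Proposition \ref{pushforward}, which already gives a Hilbert bundle structure on $q\colon\pi_*E\to Y$, and to verify that the transition functions \eqref{transition function pushforward} land continuously in $\UFP(H)$ with respect to the inductive limit topology. Since $X$ is connected and $E$ has some finite rank $r$, this amounts to controlling how sheets of $\pi$ get permuted when one passes between trivializing charts.

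First I would fix a basepoint $y_0\in Y$, identify $F=\pi^{-1}(y_0)$ as a countable metric space via the ambient metric of $X$, and transport this labeling of sheets to each $\pi^{-1}(U_i)$ by lifting a chosen path in $Y$ from $y_0$ to a basepoint of $U_i$. Combined with an orthonormal trivialization of $E$ over each sheet $U_i^x$, available by condition (2) of Proposition \ref{pushforward}, this realizes the typical fiber as $H=\ell^2(F\times\{1,\ldots,r\},\C)$. On each connected component $C$ of $U_i\cap U_j$, the transition function \eqref{transition function pushforward} is then a ``block permutation'' operator: it carries the summand indexed by $x\in F$ into the summand indexed by $\sigma_C(x)$ via the transition function $g_{ij}^{x,\sigma_C(x)}$ of $E$, where $\sigma_C\colon F\to F$ is the locally constant permutation of sheets determined by $C$.

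The main step, and the main obstacle, is to show that each $\sigma_C$ has bounded propagation, namely $\sup_{x\in F}d_X(x,\sigma_C(x))<\infty$. Here condition (1) of Proposition \ref{pushforward}, that each sheet $U_i^x$ projects isometrically to $U_i$, is essential. Indeed, $\sigma_C(x)$ is obtained from $x$ by path-lifting a fixed concatenation of paths in $Y$: from $y_0$ into $U_i$, across $U_i\cap U_j$ through $C$, and back to $y_0$ through $U_j$. This concatenation has some length $\ell_{ij,C}$ that does not depend on $x$, and lifts along sheets of $U_i$ or $U_j$ preserve length by the isometry condition, so $d_X(x,\sigma_C(x))\le\ell_{ij,C}$. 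This precisely rules out the pathological deck transformations sketched in the paragraph preceding the theorem, such as $x\mapsto -x$ on $F=\Z$.

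Once $\sigma_C$ has propagation at most $L_{ij,C}$, the transition function is supported on matrix entries of the form $((x,k),(\sigma_C(x),k'))$, so its total propagation in the metric on $F\times\{1,\ldots,r\}$ is at most $L_{ij,C}+1$, and taking the supremum over the finitely many components $C$ of $U_i\cap U_j$ gives a uniform bound $L_{ij}$. Hence the transition function lands in $\UFP_{L_{ij}}(H)$, and continuity into this stratum (with the operator norm topology) follows from the equicontinuity assumption (3) of Proposition \ref{pushforward} by the same estimate used in that proof, since the $x$-summands are pairwise orthogonal. Continuity into $\UFP_{L_{ij}}(H)$ then implies continuity into $\UFP(H)$ with the inductive limit topology, completing the proof.
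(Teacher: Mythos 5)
Your proposal is correct and follows essentially the same route as the paper's proof: reduce to Proposition \ref{pushforward}, then use the hypothesis that each sheet $U_i^x\to U_i$ is an isometry to bound how far the sheet-indexing can be displaced when changing charts, which is exactly the finite-propagation condition on the transition functions. In fact your path-lifting argument makes explicit the step the paper only sketches (``the deck transformations are uniformly continuous''), so it is a more detailed rendering of the same idea.
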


  \begin{proof}
    By Proposition \ref{pushforward}, we only need to show the transition functions are continuous and take values in unitary operators of finite propagation.
    Since $Y$ is compact, there is
    a finite open cover $Y=U_1\cup\cdots\cup U_n$ satisfying the conditions  in Proposition \ref{pushforward}. Then for each $i$
    \[
      \pi^{-1}(U_i)=\coprod_{x\in F}U_i^x
    \]
    such that $\pi\colon U_i^x\to U_i$ is isometric for each $x\in F$. Let $g_{ij}^{xy}$ be the transition function of $E$ on $U_i^x\cap U_j^y$. Then the transition function of $\pi_*E$ on $U_i\cap U_j$ is given by
    \[
      \bigoplus_{x,y\in F}g_{ij}^{xy}.
    \]
    By Proposition \ref{pushforward}, it remains to show $g_{ij}^{xy}=0$ for $d(x,y)>L$ and some $L$, which is equivalent to that the transition functions take values in unitary operators of finite propagation. Then it suffices to show that the deck transformations of $\pi$ are uniformly continuous.
    Since $Y$ is compact,  there is $L \geq 0$ such that
    $\text{diam } U_i \leq \frac{L}{2}$ for each $i$.
    Then, for each $a \in U_i^x \cap U_j^y$,
    $d(x,y) \leq d(x,a)+d(a,y) \leq L$.
    This implies that $g_{ij}^{xy}=0$ for $d(x,y) > L$,
         completing the proof.
  \end{proof}

  We show a fundamental property of pushforwards.

  \begin{proposition}
    \label{transfer}
    Let $\overline{E}\to Y$ be a vector bundle which trivializes simultaneously with $\pi$. Then $\pi_*(\pi^{-1}\overline{E})$ is the completed sum of countably many copies of $\overline{E}$.
  \end{proposition}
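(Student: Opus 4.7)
The plan is to exhibit an explicit isomorphism between $\pi_*(\pi^{-1}\overline{E})$ and the completed sum $\widehat{\bigoplus}_{f\in F}\overline{E}$ by leveraging a common trivializing cover. By the hypothesis together with Proposition \ref{pushforward}, fix a finite open cover $Y=U_1\cup\cdots\cup U_n$ on which both $\pi$ (decomposed as $\pi^{-1}(U_i)=\coprod_{f\in F}U_i^f$ with each $\pi\colon U_i^f\to U_i$ an isometry) and $\overline{E}$ (trivialized as $\overline{E}|_{U_i}\cong U_i\times V$, where $V$ is the typical fiber) become simultaneously trivial.

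First, I would set up the local identifications. For each $f\in F$, the pullback restricts to $\pi^{-1}\overline{E}|_{U_i^f}\cong U_i^f\times V$ via the isometry $\pi\colon U_i^f\to U_i$ and the trivialization of $\overline{E}$. Summing over $f\in F$ and completing fiberwise gives
\[
\pi_*(\pi^{-1}\overline{E})|_{U_i}\;\cong\;U_i\times\widehat{\bigoplus_{f\in F}}V,
\]
which is precisely the local form of $\widehat{\bigoplus}_{f\in F}\overline{E}$ over $U_i$. Call this local identification $\Phi_i$.

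Second, I would verify that the $\Phi_i$ glue into a global bundle isomorphism by matching transition functions on each overlap $U_i\cap U_j$. By the description obtained in the proof of Proposition \ref{pushforward}, the transition function of $\pi_*(\pi^{-1}\overline{E})$ on $U_i\cap U_j$ has the block form $\bigoplus_{f,f'\in F}g_{ij}^{f,f'}$, where $g_{ij}^{f,f'}$ is the transition of $\pi^{-1}\overline{E}$ between the sheets $U_i^f$ and $U_j^{f'}$. Since $\pi^{-1}\overline{E}$ is a pullback and $\pi$ is locally isometric, each $g_{ij}^{f,f'}$ equals $g_{ij}$ (the transition of $\overline{E}$ on $U_i\cap U_j$) when the sheets overlap and vanishes otherwise. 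After coherent labeling so that $U_i^f$ meets $U_j^f$ over $U_i\cap U_j$, this matrix reduces to the diagonal $\bigoplus_{f\in F}g_{ij}$, which is exactly the transition function of the completed sum.

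The main obstacle is arranging this coherent sheet labeling globally: the trivializations over $U_i$ and $U_j$ give a priori distinct bijections between sheets and $F$, and the induced permutations of $F$ on each overlap must be trivialized for the matching above to apply cleanly. I would handle this by fixing a base bijection $\pi^{-1}(y_0)\leftrightarrow F$ at a basepoint $y_0$ and propagating it through the chosen simultaneous trivialization, relabeling the sheets of each $U_i$ so that the sheet indexings agree on overlaps. Once this bookkeeping is in place, the diagonal form of the transition functions shows that the glued $\Phi_i$ is an isomorphism of Hilbert bundles with ends, since the diagonal action of $g_{ij}$ is automatically a unitary of zero propagation with respect to the basis $\{\delta_f\}_{f\in F}$ of $\widehat{\bigoplus_{f\in F}}V$, and hence preserves the end structure.
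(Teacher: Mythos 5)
You take the same route as the paper (local identification plus comparison of transition functions; the paper's whole argument is the assertion that $g_{ij}^{xy}=0$ for $x\ne y$), and you have in fact isolated the genuine difficulty more explicitly than the paper does: the matching of sheet labels over overlaps. The problem is that your proposed fix does not work. A relabeling of the sheets of each $U_i$ making the indexings by $F$ agree on \emph{all} overlaps is exactly a trivialization of the covering $\pi$ (the sets $\bigcup_i U_i^f$ would then be disjoint global sections over the connected base $Y$), so it exists only when $\pi$ is trivial. When some $U_i\cap U_j$ is disconnected, the sheet matchings on its different components differ by deck transformations, and no choice of base bijection at $y_0$ removes this. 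What the construction actually yields is that the transition function of $\pi_*(\pi^{-1}\overline{E})$ on $U_i\cap U_j$ is the product of a locally constant permutation operator recording the monodromy of $\pi$ with the diagonal operator $\bigoplus_{f\in F}g_{ij}$; only the second factor is the transition function of the completed sum.

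This is not removable bookkeeping: the discrepancy is visible inside the paper itself. For the universal cover $\pi\colon\R\to S^1$ and $\overline{E}=S^1\times\C$ (which certainly trivializes simultaneously with $\pi$), the completed sum of countably many copies of $\overline{E}$ is the trivial $\ell^2(\Z,\C)$-bundle with end, whereas Proposition \ref{universal cover S^1} shows that $\pi_*(\pi^{-1}\overline{E})=\pi_*(\R\times\C)$ has $\beta_1\ne 0$, its transition function being precisely the monodromy permutation (the shift $S$). So the statement, and with it both your proof and the paper's, can only hold as written for trivial coverings; in general the correct conclusion is that $\pi_*(\pi^{-1}\overline{E})$ is the completed sum of copies of $\overline{E}$ twisted by the monodromy permutation, which is what Corollary \ref{transfer c_n} actually needs, since the permutation factor contributes only through the $\U(\infty)$ factor of $B\widehat{\UFP}$, where the classes $\hat{\alpha}_n$ vanish. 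To salvage your argument, either add the hypothesis that $\pi$ is trivial, or prove this weaker twisted statement by exhibiting the transition function as the product of the two commuting factors above.
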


  \begin{proof}
    We use the same notation as in the proof of Theorem \ref{pushforward end}. Then $g_{ij}^{xy}=0$ for $x\ne y$, implying that $\pi_*(\pi^{-1}\overline{E})$ is the completed sum of countably many copies of $\overline{E}$.
  \end{proof}

  We express the property of pushforward in Proposition \ref{transfer} by characteristic classes.

  \begin{corollary}
    \label{transfer c_n}
    Let $\overline{E}\to Y$ be a vector bundle which trivializes simultaneously with $\pi$. Then
    \[
      \hat{\alpha}_n(\pi_*(\pi^{-1}\overline{E}))\equiv c_n(
     \overline{E} )\mod(c_1(\overline{E}),\ldots,c_{n-1}(\overline{E})).
    \]
  \end{corollary}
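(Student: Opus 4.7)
My plan is to reduce to the universal case $Y = BU(r)$ by naturality, and then pin down the leading coefficient by a primitive-element computation on homotopy groups.

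First, I would invoke Proposition \ref{transfer} to identify $\pi_*(\pi^{-1}\overline{E})$ with the completed sum of countably many copies of $\overline{E}$. Writing $r$ for the rank of $\overline{E}$, the classifying map of this Hilbert bundle with periodic end factors as
\[
  Y \xrightarrow{\bar{f}} BU(r) \xrightarrow{B\Delta} B\widehat{\UFP},
\]
where $\bar{f}$ is a classifying map of $\overline{E}$ (so $\bar{f}^*c_i = c_i(\overline{E})$) and $\Delta\colon U(r) \to \widehat{\UFP}$ is the block-diagonal homomorphism $u \mapsto \diag(\ldots,u,u,u,\ldots)$. By naturality of $\hat{\alpha}_n$, we have $\hat{\alpha}_n(\pi_*(\pi^{-1}\overline{E})) = \bar{f}^*((B\Delta)^*(\hat{\alpha}_n))$, so it suffices to prove
\[
  (B\Delta)^*(\hat{\alpha}_n) \equiv c_n \pmod{(c_1, \ldots, c_{n-1})}
\]
in $H^*(BU(r); \Q) = \Q[c_1, \ldots, c_r]$.

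The class $(B\Delta)^*(\hat{\alpha}_n)$ lies in $H^{2n}(BU(r); \Q)$, so by degree considerations it is a weighted-homogeneous polynomial of weight $2n$ in $c_1, \ldots, c_n$ (with $c_i$ of weight $2i$), hence has the form $\lambda c_n + (\text{products of } c_1, \ldots, c_{n-1})$ for some $\lambda \in \Q$. The task reduces to showing $\lambda = 1$. To compute $\lambda$, I would evaluate on the Hurewicz image of a generator $\sigma \in \pi_{2n}(BU(r))$, assuming $n \le r$ (if $n > r$ the statement is trivial since $c_n(\overline{E}) = 0$). All decomposable Chern monomials vanish on the primitive class $h(\sigma)$, and the standard identity coming from $\mathrm{ch}_n = \frac{(-1)^{n-1}}{(n-1)!}c_n + (\text{decomposables})$ gives $\langle c_n, h(\sigma)\rangle = (-1)^{n-1}(n-1)!$.

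On the $B\widehat{\UFP}$ side, the analysis in the proof of Lemma \ref{inclusion odd} together with Corollary \ref{inclusion periodic} identifies $\Delta_*(\sigma) \in \pi_{2n-1}(\widehat{\UFP}) \cong \Q$ with $1$, which then maps under $i$ to $\iota(1) \in \ell^\infty(\Z,\Z)_S$. By naturality of the Hurewicz map and the normalization $\alpha_n(b) = \frac{1}{(n-1)!}x(b)$ — precisely calibrated so that the pairing of $\hat{\alpha}_n$ with the primitive Hurewicz image coming from $\iota(1)$ absorbs the factor $(n-1)!$ arising from the divisibility of the Hurewicz map on $BU(\infty)$ — the pairing $\langle (B\Delta)^*(\hat{\alpha}_n), h(\sigma)\rangle$ matches $\langle c_n, h(\sigma)\rangle = (-1)^{n-1}(n-1)!$, forcing $\lambda = 1$. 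The main obstacle will be carrying out this final pairing computation cleanly, keeping careful track of the factor $\frac{1}{(n-1)!}$ built into $\alpha_n(b)$, the $(n-1)!$-divisibility of the integral Hurewicz image in $H_{2n}(BU(\infty);\Z)$, and signs coming from Newton's identity expressing $c_n$ in terms of primitive power-sum classes.
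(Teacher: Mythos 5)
Your proposal is correct and follows essentially the same route as the paper: both reduce via Proposition \ref{transfer} to the universal block-diagonal map $\U(r)\to\widehat{\UFP}$ and then use the $(n-1)!$-divisibility of the Hurewicz image in $H_{2n}(B\U(\infty))$ together with the normalization $\alpha_n(b)=\frac{1}{(n-1)!}x(b)$ to force the coefficient of $c_n$ to be $1$. The only real difference is that where you carry out a primitivity/pairing computation against $h(\sigma)$ (keeping track of the $(n-1)!$ factors and signs yourself), the paper instead cites from \cite{KKT1} the factorization of $\Delta$ through $K(\Q,2n-1)$ via the looping of the rationalized $c_n$, which yields the same coefficient computation more directly.
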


  \begin{proof}
    Clearly, $\pi_*(\pi^{-1}\overline{E})$ is a Hilbert bundle with periodic end, so that
    $\hat{\alpha}_n(\pi_*(\pi^{-1}\overline{E}))$ is well defined. By Proposition \ref{transfer}, $\pi_*(\pi^{-1}\overline{E})$ is classified by the composition of maps
    \[
      Y\xrightarrow{f}B\U(n)\xrightarrow{\Delta}B\widehat{\UFP}
    \]
    where $f$ is a classifying map of $\overline{E}$ and $\Delta$ is induced from the diagonal inclusion $\U(n)\to\widehat{\UFP}$. By \cite{KKT1}, $\Delta$ factors as the composition
    \[
      \U(n)\to K(\Q,2n-1)\to\widehat{\UFP}
    \]
    where the first map is the looping of the rationalization of $c_n$ and the second map is given by Theorem \ref{homotopy type periodic}. If $\epsilon\colon S^{2n}\to B\U(\infty)$ represents $1\in\Z\cong\pi_{2n}(B\U(\infty))$, then $\epsilon^*(c_n)=(n-1)!u$ for a generator $u$ of $H^{2n}(S^{2n})\cong\Z$. Thus by the definition of $\widehat{\alpha}_n$, the proof is completed.
  \end{proof}

  Now we consider the pushforward along an infinite cyclic covering.

  \begin{proposition}
    \label{universal cover S^1}
    Let $\pi\colon\R\to S^1$ be the universal cover. Then $\pi_*(\R\times\C)\to S^1$ is a Hilbert bundle with end such that
    \[
      \beta_1(\pi_*(\R\times\C))\ne 0.
    \]
  \end{proposition}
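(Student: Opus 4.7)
The plan is first to note that the hypotheses of Theorem \ref{pushforward end} are essentially trivially satisfied, so that $\pi_*(\R\times\C)$ really is a Hilbert bundle with end; then to compute the monodromy around $S^1$ explicitly and identify it with the shift operator; and finally to invoke Proposition \ref{pi_0} and Corollary \ref{cohomology U(1)} to conclude that the classifying map detects $\beta_1$.

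For the first step, I cover $S^1$ by the two open arcs $U_1=S^1\setminus\{1\}$ and $U_2=S^1\setminus\{-1\}$. Each preimage $\pi^{-1}(U_i)$ is an isometric disjoint union $\coprod_{n\in\Z}U_i^n$ of copies of $U_i$, and the trivial line bundle $\R\times\C$ has no non-trivial transition data, so conditions (1)--(3) of Proposition \ref{pushforward} hold. Since the fibre $\pi^{-1}(y_0)\cong\Z$ is a countable metric space and the underlying bundle has rank one, Theorem \ref{pushforward end} applies and $\pi_*(\R\times\C)$ is an $\ell^2(\Z,\C)$-bundle with end.

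For the second step, the overlap $U_1\cap U_2$ has two components $V^+$ and $V^-$. Labelling the sheets of each $\pi^{-1}(U_i)$ by their lifts in $\R$, one checks that over $V^+$ the sheet $U_1^n$ matches $U_2^n$, while over $V^-$ the sheet $U_1^n$ matches $U_2^{n+1}$. Since $\R\times\C$ itself has trivial transition functions, formula \eqref{transition function pushforward} then gives the identity on $V^+$ and the shift $S$ on $V^-$ as the transition functions of $\pi_*(\R\times\C)$. Traversing $S^1$ once therefore produces a loop in $\UFP(1)$ whose class in $\pi_0(\UFP(1))$ is $[S]$, and by Proposition \ref{pi_0} this is a generator of $\pi_0(\UFP(1))\cong\Z$.

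Let $f\colon S^1\to B\UFP(1)$ be a classifying map of $\pi_*(\R\times\C)$. The previous paragraph shows that $f_*\colon\pi_1(S^1)\to\pi_1(B\UFP(1))\cong\pi_0(\UFP(1))$ is an isomorphism, hence so is $f^*\colon H^1(B\UFP(1);\Q)\to H^1(S^1;\Q)$. By Corollary \ref{cohomology U(1)}, $H^1(B\UFP(1);\Q)$ is one-dimensional and spanned by $\beta_1$, so $\beta_1(\pi_*(\R\times\C))=f^*(\beta_1)\ne 0$. The only substantive step is the sheet-relabelling analysis that identifies the monodromy with the shift; every other ingredient is an immediate invocation of results already established in the paper.
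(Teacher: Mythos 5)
Your proposal is correct and follows essentially the same route as the paper: cover $S^1$ by two arcs, observe that the clutching function is the identity on one component of the overlap and the shift $S$ on the other, invoke Proposition \ref{pi_0} to see this generates $\pi_0(\UFP(1))\cong\pi_1(B\UFP(1))$, and conclude $\beta_1\ne 0$ from Corollary \ref{cohomology U(1)}. Your version is slightly more explicit about verifying the hypotheses of Proposition \ref{pushforward} and about the passage from $\pi_1$ to $H^1(\,\cdot\,;\Q)$, but there is no substantive difference.
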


  \begin{proof}
 We consider a cover of $S^1$ by its upper half and lower half, which obviously gives trivialization of $\R\times\C$. Then by Theorem \ref{pushforward end}, $\pi_*(\R\times\C)$ is a Hilbert bundle with end. Consider the transition function $g\colon S^0\to\UFP(1)$ of the above cover of $S^1$, where $S^0=\{\pm 1\}$ is the equator of $S^1$. We may assume $g(+1)=1\in\UFP(1)$, so that $g(-1)$ is the matrix $(g_{ij})_{i,j\in\Z}$ where $g_{ij}=1$ for $j=i-1$ and $g_{ij}=0$ otherwise. Then $g(-1)$ is the shift operator $S$, and so $g$ represents a generator of $\pi_0(\UFP(1))\cong\Z$ by Proposition \ref{pi_0}. Thus a classifying map of $\pi_*(\R\times\C)$ represents a generator of $\pi_1(B\UFP(1))\cong\pi_0(\UFP(1))\cong\Z$, and so $\beta_1(\pi_*(\R\times\C))\ne 0$, completing the proof.
  \end{proof}

  \begin{corollary}
    \label{beta_1}
    Let $\pi\colon X\to Y$ be a normal infinite cyclic covering satisfying the conditions in Theorem \ref{pushforward end}. Then $\pi_*(X\times\C)\to Y$ is a Hilbert bundle with end such that
    \[
      \beta_1(\pi_*(X\times\C))\ne 0.
    \]
  \end{corollary}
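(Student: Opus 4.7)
The plan is to realize $\pi$ as the pullback of the universal cover $\tilde\pi\colon\R\to S^1$ treated in Proposition \ref{universal cover S^1}, and then apply naturality of $\beta_1$. Since $\pi$ is normal with deck transformation group $\Z$, it is classified by a map $f\colon Y\to S^1=K(\Z,1)$ corresponding to the surjection $\pi_1(Y)\twoheadrightarrow\Z$; concretely, $\pi$ is isomorphic as a covering space to the pullback $f^{-1}(\tilde\pi)=Y\times_{S^1}\R\to Y$. Under this identification the trivial bundle $X\times\C$ is the pullback of the trivial bundle $\R\times\C$, and the canonical bijections of fibers $\pi^{-1}(y)\cong\tilde\pi^{-1}(f(y))$ piece together to an isomorphism of Hilbert bundles with ends
\[
\pi_*(X\times\C)\cong f^{-1}(\tilde\pi_*(\R\times\C)).
\]

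Granting this, Proposition \ref{characteristic class}(1) together with Proposition \ref{universal cover S^1} yields
\[
\beta_1(\pi_*(X\times\C))=f^*\beta_1(\tilde\pi_*(\R\times\C)),
\]
where the second factor generates $H^1(S^1;\Q)\cong\Q$. The remaining task is to show that $f^*\colon H^1(S^1;\Q)\to H^1(Y;\Q)$ is nonzero. The homotopy class $[f]\in[Y,S^1]=H^1(Y;\Z)$ is identified with the homomorphism $H_1(Y;\Z)\to\Z$ induced by the classifying surjection $\pi_1(Y)\to\Z$. Because that homomorphism is surjective, $[f]$ has infinite order in $H^1(Y;\Z)$ and therefore survives in $H^1(Y;\Q)$, so $f^*$ is injective on $H^1(S^1;\Q)$ and the desired non-vanishing follows.

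The main obstacle I anticipate is justifying the first step, namely that the fiberwise identification $\pi_*(X\times\C)\cong f^{-1}(\tilde\pi_*(\R\times\C))$ respects the end structure, i.e.\ produces transition functions in $\UFP(\ell^2(\Z,\C))$ rather than merely in $\U(\ell^2(\Z,\C))$. To handle this I would pull back the two-element trivializing cover of $S^1$ used in the proof of Proposition \ref{universal cover S^1} to an open cover of $Y$, verify that this cover still satisfies the hypotheses of Theorem \ref{pushforward end} for $\pi$, and observe that the resulting transition functions for $\pi_*(X\times\C)$ are precisely the compositions of $f$ with the (constant in one case, shift in the other) transition functions of $\tilde\pi_*(\R\times\C)$; in particular their finite propagation is inherited directly from the base. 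Once this compatibility is in place the cohomological argument above is immediate.
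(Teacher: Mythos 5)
Your proposal is correct and follows essentially the same route as the paper: both realize $\pi$ as the pullback of the universal cover $\R\to S^1$ along a classifying map $f\colon Y\to S^1$, identify $\pi_*(X\times\C)\cong f^{-1}(\bar\pi_*(\R\times\C))$, and conclude by naturality of $\beta_1$ together with Proposition \ref{universal cover S^1} and the injectivity of $f^*$ on $H^1(S^1;\Q)$. Your extra care about the end structure being preserved under pullback and about why surjectivity of $\pi_1(Y)\to\Z$ forces $[f]$ to survive rationally only makes explicit what the paper leaves implicit.
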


  \begin{proof}
    Since $\pi\colon X\to Y$ is normal, there is a map $f\colon Y\to S^1$ such that $X\cong f^{-1}\R$ and $f$ is surjective in $\pi_1$, where $\bar{\pi}\colon\R\to S^1$ is the universal cover. Then
    \[
      \pi_*(X\times\C)\cong\pi_*f^{-1}(\R\times\C)\cong f^{-1}\pi_*(\R\times\C)
    \]
    Since $f$ is non-trivial in $\pi_1$, it is injective in $H^1$. Thus by Propositions \ref{characteristic class} and \ref{universal cover S^1},
    \[
      \beta_1(\pi_*(X\times\C))=\beta_1(f^{-1}\bar{\pi}_*(\R\times\C))=f^*(\beta_1(\bar{\pi}_*(\R\times\C))\ne 0.
    \]
    Therefore, the proof is done.
  \end{proof}

  As for even dimensional characteristic classes, given any element $a\in\ell^\infty(\Z,\Z)_S$, we can construct quite similarly to Proposition \ref{completed sum c_n} a vector bundle $E\to\R\times S^{2n}$ such that the pushforward $\pi_*E$ along the universal cover $\R\to S^1$ satisfies $\alpha_n(\pi_*E;\bar{a})\ne 0$
  for  $\bar{a}\in\ell^\infty(\Z,\Z)_S^\vee$.


  \section{Fourier transform}\label{Fourier transform}

  This section constructs a Hilbert bundle with end by using Fourier transform. This bundle is an $\ell^2(\Z,\C)$-bundle, and will turn out to have a non-trivial 1-dimensional characteristic class. We also discuss possible higher dimensional analog.


  \subsection{Fourier transform over a circle}

  Let $S^1=\{e^{2\pi\sqrt{-1}t}\mid t\in\R\}$, and let $L^2(S^1)$ denote the Hilbert space of square
   integrable complex valued functions on $S^1$ as above. Consider a differential operator
  \[
    D=\frac{1}{2\pi\sqrt{-1}}\frac{d}{dt}
  \]
  acting on $L^2(S^1)$. Then $D$ yields the spectral decomposition of $L^2(S^1)$ such that the eigenvalues of $D$ are all integers and the eigenspace of an integer $n$ is spanned by $e_n=e^{2n\pi\sqrt{-1}t}$. Notice that $\{e_n\}_{n\in\Z}$ is an orthonormal basis of $L^2(S^1)$, which is indexed by a countable metric space $\Z$. In particular, we get an isometry
  \[
    L^2(S^1)\cong\ell^2(\Z,\C)
  \]
  which is exactly the Fourier transform.

  Using the Fourier transform, we construct a Hilbert bundle with end. For each $w\in S^1$, we define a flat line bundle
  \[
    L_w=([0,1]\times\C)/(0,z)\sim(1,zw)
  \]
  over $S^1$. Let $L^2(L_w)$ denote the Hilbert space of $L^2$ sections of $L_w$. Then the differential operator also acts on $L^2(L_w)$ because $L_w$ is flat, so that $L^2(L_w)$ has the spectral decomposition by $D$. We show that as $w$ ranges over all elements of $S^1$, these spectral decompositions of $L^2(L_w)$ glue together to yield a Hilbert bundle with end over $S^1$.

  Let
  \begin{equation}
    \label{open cover S^1}
    U_1=\{e^{-2\pi\sqrt{-1}t}\in S^1\mid 0<t<\tfrac{3}{4}\}\quad\text{and}\quad U_2=\{e^{-2\pi\sqrt{-1}t}\in S^1\mid \tfrac{1}{2}<t<\tfrac{5}{4}\}.
  \end{equation}
  We get an open cover $S^1=U_1\cup U_2$. Consider a map
  \[
    \Phi_1\colon\coprod_{w\in U_1}L^2(L_w)\to U_1\times L^2(S^1),\quad f\mapsto(w,f_w)
  \]
  where $f_w(z)=e^{2\pi\sqrt{-1}st}f(z)$ for $f\in L^2(L_w)$, $w=e^{2\pi\sqrt{-1}s}$ and $z=e^{2\pi\sqrt{-1}t}$. Then $\Phi_1$ is a bijection such that it restricts to an isometry on each $L^2(L_w)\to\{w\}\times L^2(S^1)$ for each $w\in U_1$. Then we get a topology on $\coprod_{w\in U_1}L^2(L_w)$ such that $\Phi_1$ is a homeomorphism and restricts to an isometry on each $L^2(L_w)$. On the other hand, the map
  \[
    \Phi_2\colon\coprod_{w\in U_2}L^2(L_w)\to U_2\times L^2(S^1),\quad f\mapsto(w,f_w)
  \]
  also defines a topology on $\coprod_{w\in U_2}L^2(L_w)$ such that $\Phi_2$ restricts to an isometry on each $L^2(L_w)$, where $f_w(z)=e^{2\pi\sqrt{-1}st}f(z)$ for $f\in L^2(L_w)$, $w=e^{2\pi\sqrt{-1}s}$ and $z=e^{2\pi\sqrt{-1}t}$.

  Let $\mathcal{L}^1=\coprod_{w\in S^1}L^2(L_w)$. Then there is a map
  \[
    q\colon\mathcal{L}^1\to S^1,\quad q(L^2(L_w))\to w.
  \]
  Note that $q^{-1}(U_i)$ are topologized by $\Phi_i$ for $i=1,2$. Then we get a topology on $\mathcal{L}=q^{-1}(U_1)\cup q^{-1}(U_2)$ such that the natural map $q\colon\mathcal{L}\to S^1$ trivializes on $U_1$ and $U_2$.

  Next, we consider the transition functions of $\widehat{L}$. Let
  \[
    V_1=\{e^{-2\pi\sqrt{-1}t}\in S^1\mid -\tfrac{1}{4}<t<0\}\quad\text{and}\quad V_2=\{e^{-2\pi\sqrt{-1}t}\in S^1\mid \tfrac{1}{2}<t<\tfrac{3}{4}\}.
  \]
  Then $U_1\cap U_2=V_1\sqcup V_2$. By definition,
  \[
    \Phi_2\circ\Phi_1^{-1}(z,f)=
    \begin{cases}
      (z,f)&(z,f)\in V_1\times L^2(S^1)\\
      (z,z^{-1}f)&(z,f)\in V_2\times L^2(S^1).
    \end{cases}
  \]
  In particular, by using the orthonormal basis $\{e_n\}_{n\in\Z}$ of $L^2(S^1)$ obtained by the spectral decomposition of the differential operator $D$, we get
  \[
    \Phi_2\circ\Phi_1\vert_{V_1}=1\quad\text{and}\quad\Phi_2\circ\Phi_1\vert_{V_2}=1\times S
  \]
  where $S$ is the shift operator. Thus we obtain:

  \begin{theorem}
    \label{Fourier transform bundle}
    The map $q\colon\mathcal{L}^1\to S^1$ is a Hilbert bundle with end such that $\beta_1(\mathcal{L}^1)$ is the dual of the Hurewicz image of a generator $[S]$ of $\pi_0(\UFP(1))\cong\pi_1(B\UFP(1))\cong\Z$.
  \end{theorem}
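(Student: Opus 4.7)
The plan is to verify that $q\colon\mathcal{L}^1\to S^1$ is a Hilbert bundle with end, and then to compute $\beta_1(\mathcal{L}^1)$ via the clutching construction. The construction preceding the theorem has already exhibited an open cover $S^1=U_1\cup U_2$ with $U_1\cap U_2=V_1\sqcup V_2$, trivializations $\Phi_1,\Phi_2$, and the transition function $\Phi_2\circ\Phi_1^{-1}$, which equals the identity on $V_1$ and the shift operator $S$ on $V_2$. This computation is the main technical input, and the rest of the argument is essentially formal.

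First, I would observe that both values $1$ and $S$ lie in $\UFP_1(\ell^2(\Z,\C))$, since $S$ has propagation exactly $1$. The transition function, being locally constant on the two components $V_1,V_2$, is continuous into $\UFP(1)$ with the inductive limit topology. Hence $q\colon\mathcal{L}^1\to S^1$ is a Hilbert bundle with end.

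Second, to identify $\beta_1(\mathcal{L}^1)$, I would analyze the classifying map $f\colon S^1\to B\UFP(1)$ via clutching. Thinking of $S^1$ as two contractible arcs $U_1,U_2$ glued along $U_1\cap U_2=V_1\sqcup V_2$, the bundle $\mathcal{L}^1$ is obtained by clutching two trivial bundles via the transition function, so the class of $f$ in $\pi_1(B\UFP(1))\cong\pi_0(\UFP(1))$ is the difference of the values on $V_2$ and $V_1$, namely $[S]-[1]=[S]$. By Proposition \ref{pi_0}, $[S]$ generates $\pi_0(\UFP(1))\cong\Z$, so $f$ represents a generator of $\pi_1(B\UFP(1))$. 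This argument mirrors the computation already carried out in Proposition \ref{universal cover S^1}.

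Third, by the choice of $\beta_1\in H^1(B\UFP(1);\Q)$ as the transgression of $c_1$, which under the splitting from Theorems \ref{homotopy type UFP} and \ref{Bott periodicity UFP} sits inside $H^1(\U(\infty))\subset H^1(B\UFP(1);\Q)$ as the dual of the Hurewicz image of the generator of $\pi_1(\U(\infty))\cong\pi_1(B\UFP(1))$, the pullback $f^*(\beta_1)=\beta_1(\mathcal{L}^1)\in H^1(S^1;\Q)$ pairs with the fundamental class of $S^1$ precisely as $\beta_1$ pairs with the Hurewicz image of $[S]$. This gives the asserted identification. The only delicate point is tracking signs and orientation conventions in the clutching isomorphism $\pi_1(B\UFP(1))\cong\pi_0(\UFP(1))$, but since we only need to distinguish the generator $[S]$ from zero, the nontriviality asserted in the theorem follows unambiguously from Proposition \ref{pi_0}.
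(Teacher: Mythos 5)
Your proposal is correct and follows essentially the same route as the paper: the paper's ``proof'' is just the preceding computation that the transition function equals $1$ on $V_1$ and the shift $S$ on $V_2$, from which membership in $\UFP(1)$ and the identification of the clutching class with the generator $[S]\in\pi_0(\UFP(1))$ (Proposition \ref{pi_0}) follow, exactly as in the paper's proof of Proposition \ref{universal cover S^1}. Your additional remarks on continuity into the inductive limit topology and on the pairing of $\beta_1$ with the Hurewicz image merely make explicit what the paper leaves implicit.
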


  By Proposition \ref{inclusion Ures}, the characteristic class $\beta_1$ comes from $B\Ures$, and there is no interpretation of this cohomology class in terms of differential operators as above. So Theorem \ref{Fourier transform bundle} would show a new direction for studying $\Ures$. On the other hand, it was proved by Gross, Nesme, Vogts and Werner \cite{GNVW} that $\pi_0(\U(H))\cong\Z$ for every Hilbert space $H$ equipped with a specific basis indexed by a countable metric space. Then the characteristic class $\beta_1(E)$ can be defined for an arbitrary Hilbert bundle with end $E$. It would be of interest to study $\beta_1$ for general Hilbert bundle with ends.


  \subsection{Fourier transform over a torus}

  We consider a two dimensional analog of the above construction using Fourier transform. Let $T^2$ be a 2-dimensional torus $S^1\times S^1$, and let $L^2(T^2)$ denote the Hilbert space of square integrable complex functions on $T$. Then a differential operator
  \[
    D=\frac{1}{2\pi\sqrt{-1}}(\frac{d}{dt_1}+\frac{d}{dt_2})
  \]
  acts on $L^2(T^2)$ such that the spectral decomposition gives an orthonormal basis $\{e_{n_1,n_2}\}_{n_1,n_2\in\Z}$, where $e_{n_1,n_2}=\frac{1}{2\pi\sqrt{-1}}e^{2\pi\sqrt{-1}(n_1t_1+n_2t_2)}$. In particular, we get an isometry
  \[
    L^2(T^2)\xrightarrow{\cong}\ell^2(\Z^2,\C)
  \]
  by using $\{e_{n_1,n_2}\}_{n_1,n_2\in\Z}$, which is exactly Fourier transform over $T^2$.

  For $w=(w_1,w_2)\in T^2$, we define a flat line bundle
  \[
    L_w=(\C\times[0,1]\times[0,1])/\sim
  \]
  over $T^2$, where $(z,0,t)\sim(zw_1,1,t)$ and $(z,t,0)\sim (zw_2,t,1)$. Let $L^2(L_w)$ denote the Hilbert space of $L^2$ sections of $L_w$. Since $L_w$ is flat, the differential operator $D$ acts on $L^2(L_w)$. Let
  \[
    \mathcal{L}^2=\coprod_{w\in T^2}L^2(L_w).
  \]
  Then $D$ acts on $\mathcal{L}$, and there is a natural map $q\colon\mathcal{L}^2\to T^2$ such that $q(L^2(L_w))=w$. Let $S^1=U_1\cup U_2$ be an open cover given in \eqref{open cover S^1}. Then we get an open cover $T^2=\bigcup_{1\le i,j\le 2}U_i\times U_j$, and quite similarly to the one dimensional case, we obtain a bijection
  \[
    \Phi_{ij}\colon\coprod_{w\in U_i\times U_j}L^2(L_w)\to U_i\times U_j\times L^2(T^2)
  \]
  which restricts to an isometry $L^2(L_w)\to L^2(T^2)$. Moreover, the transition functions are given by
  \[
    \begin{pmatrix}
      1 & 0 \\
      0 & 1
    \end{pmatrix},\quad
    \begin{pmatrix}
      1 & 0 \\
      0 & e^{-2\pi\sqrt{-1}t}
    \end{pmatrix},\quad
    \begin{pmatrix}
      e^{-2\pi\sqrt{-1}t} & 0 \\
      0 &  1
    \end{pmatrix},\quad
    \begin{pmatrix}
      e^{-2\pi\sqrt{-1}t} & 0 \\
      0 &  e^{-2\pi\sqrt{-1}t}
    \end{pmatrix}.
  \]
  Thus we obtain:

  \begin{proposition}
    The map $q\colon\mathcal{L}^2\to T^2$ is a Hilbert bundle with end.
  \end{proposition}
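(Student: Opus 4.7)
The plan is to follow the strategy used for Theorem \ref{Fourier transform bundle} in the one-dimensional case, adapted to the product structure $T^2 = S^1 \times S^1$. First I would verify that each $\Phi_{ij}$ is a well-defined bijection whose restriction to every fiber $L^2(L_w) \to \{w\} \times L^2(T^2)$ is an isometry; this follows by the same computation as in the circle case because on $U_i \times U_j$ the bundle $L_w$ admits a canonical trivialization by the same exponential formula applied in each coordinate separately. The topology on $\mathcal{L}^2$ is then defined so that each $\Phi_{ij}$ becomes a homeomorphism, and $q$ is locally trivial with fiber $L^2(T^2) \cong \ell^2(\Z^2, \C)$ by construction.

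Next I would analyze the overlaps. Each intersection $(U_i \times U_j) \cap (U_k \times U_l)$ decomposes into at most four connected components corresponding to the products $V_a \times V_b$ of the components $V_1, V_2$ of $U_1 \cap U_2$ introduced in the one-dimensional construction. On each such component, the composition $\Phi_{kl} \circ \Phi_{ij}^{-1}$ is multiplication by a function of the form $w_1^{\epsilon_1} w_2^{\epsilon_2}$ with $\epsilon_1, \epsilon_2 \in \{0,1\}$ (this is what the four $2 \times 2$ diagonal matrices in the excerpt encode, indexed by the two circle factors). After passing through the Fourier isometry $L^2(T^2) \cong \ell^2(\Z^2, \C)$ with the basis $\{e_{n_1, n_2}\}$, multiplication by $e^{-2\pi\sqrt{-1} t_k}$ on the $k$-th factor becomes the shift operator $S_k$ along the $k$-th coordinate of $\Z^2$.

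It then follows that on each connected component of each overlap the transition function is a locally constant continuous map into $\UFP(\ell^2(\Z^2, \C))$ taking values in $\{1, S_1, S_2, S_1 S_2\}$, each of which has propagation at most $1$ with respect to the $\Z^2$ metric. Since propagation is uniformly bounded on each chart overlap, the transition functions factor through a single $\UFP_L$ for some fixed $L$, and continuity as maps into $\UFP(\ell^2(\Z^2, \C))$ with the inductive limit topology reduces to continuity in the operator norm on $\UFP_L$, which is immediate. This establishes that $q\colon \mathcal{L}^2 \to T^2$ is a Hilbert bundle with end.

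The main obstacle I anticipate is a bookkeeping one rather than a conceptual one: carefully tracking which connected component of $(U_i \times U_j) \cap (U_k \times U_l)$ contributes which of the four possible shift monomials, and ensuring that the tensor-product identification $\ell^2(\Z^2, \C) \cong \ell^2(\Z,\C) \otimes \ell^2(\Z,\C)$ is compatible with the two independent exponential multiplications. Once this identification is made precise, the computation reduces to the one-dimensional case applied in each factor, and no new homotopy-theoretic input beyond that already used for $\mathcal{L}^1$ is required.
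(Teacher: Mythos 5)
Your proposal is correct and follows essentially the same route as the paper: trivialize $\mathcal{L}^2$ over the product cover $\{U_i\times U_j\}$, compute the transition functions as multiplication by monomials $w_1^{\epsilon_1}w_2^{\epsilon_2}$ on the components of the overlaps, and observe that under the Fourier isometry $L^2(T^2)\cong\ell^2(\Z^2,\C)$ these become coordinate shift operators, which are of finite propagation. Your write-up actually supplies more detail (local constancy on overlap components, factoring through a fixed $\UFP_L$) than the paper, which leaves these steps implicit.
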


  In the one dimensional case, non-triviality of the bundle is proved by using characteristic class. However, the abelian groups $V_0$ and $V_1$ are not explicitly given in Theorem \ref{homotopy type UFP 2}, so we cannot see non-triviality in dimension 2. The above construction suggests that we can construct a Hilbert bundle with end over an $n$-torus for any $n\ge 3$. However, the problem of determining non-triviality of bundles is left open too.


  \section{Harmonic oscillator}\label{Harmonic oscillator}

  This section constructs a Hilbert bundle with end by the spectral decomposition of a 2-dimensional harmonic oscillator. This bundle is an $\ell^2(\Z,\C)$-bundle, and it will turn out that its 2-dimensional characteristic class is non-trivial. We refer to \cite{R} for more details on harmonic oscillators.


  \subsection{1-dimensional harmonic oscillator}

  We recall 1-dimensional harmonic oscillators to help understanding the 2-dimensional case which is our subject. Let $a$ be a positive real number, and let $L^2(\R,\C)$ denote the Hilbert space of square integrable complex functions. The second order self-adjoint differential operator
  \[
    H=-\frac{d^2}{dx^2}+a^2x^2
  \]
  on $L^2(\R,\C)$ is called a \emph{1-dimensional harmonic oscillator}. Note that a harmonic oscillator $H$ is not compact. We consider the spectral decomposition of $L^2(\R,\C)$ with respect to $H$. The first order differential operators
  \[
    A=\frac{d}{dx}+ax
    \quad\text{and}\quad A^*=-\frac{d}{dx}+ax
  \]
  on $L^2(\R,\C)$ are called an \emph{annihilation operator} and a \emph{creation operator}, respectively. These operators are quite useful for the spectral decomposition. It is easy to verify:

  \begin{lemma}
    \label{1D relation}
    The following identities hold:
    \[
      AA^*=H+a,\quad A^*A=H-a,\quad[A,A^*]=2a,\quad[H,A]=-2aA,\quad[H,A^*]=2aA^*.
    \]
  \end{lemma}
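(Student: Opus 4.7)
The plan is to verify all five identities by direct computation, exploiting the fact that $A$ and $A^*$ are first-order differential operators with affine coefficients, so their compositions are straightforward to expand.

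First I would compute $AA^*$ and $A^*A$ by applying the operators to a test smooth function $f\in L^2(\R,\C)$. For $AA^*$, expanding
\[
  AA^* f = \Bigl(\frac{d}{dx}+ax\Bigr)\Bigl(-\frac{d}{dx}+ax\Bigr)f
\]
produces four terms, and the product-rule contribution $\frac{d}{dx}(axf)=af+ax\frac{df}{dx}$ yields the constant $+a$ after the $\pm ax\frac{df}{dx}$ terms cancel. This gives $AA^*=H+a$. The same calculation with the order reversed gives $A^*A=H-a$, where the sign of the commutator term flips.

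With $AA^*$ and $A^*A$ in hand, the remaining three relations follow by pure algebra without any further differentiation. Subtracting yields the canonical commutation relation
\[
  [A,A^*]=AA^*-A^*A=(H+a)-(H-a)=2a.
\]
For the last two, I would write $H=A^*A+a$ inside $[H,A]$ and $H=AA^*-a$ inside $[H,A^*]$, so that
\[
  [H,A]=[A^*A,A]=A^*A\,A-A\,A^*A=-[A,A^*]A=-2aA,
\]
and symmetrically $[H,A^*]=[AA^*,A^*]=[A,A^*]A^*=2aA^*$.

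There is no real obstacle here; the only point demanding care is bookkeeping the product rule in the first step, since that is the sole place where genuine differentiation (as opposed to formal commutator manipulation) occurs. Everything else is a two-line algebraic consequence of the first two identities.
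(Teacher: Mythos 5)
Your computation is correct, and the paper itself offers no proof of this lemma (it is introduced with ``It is easy to verify''), so your direct verification by expanding $AA^*$ and $A^*A$ on a test function and then deducing the three commutator identities algebraically is exactly the intended argument. No issues.
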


  Let $\psi_0(x)=(a^2\pi)^\frac{1}{4}e^{-\frac{ax^2}{2}}\in L^2(\R,\C)$. Then we have
  \[
    H\psi_0=a\psi_0.
  \]
  For $k\ge 1$, we define $\psi_k\in L^2(\R,\C)$ inductively by
  \[
    \psi_k=\frac{1}{\sqrt{2ka}}A^*\psi_{k-1}.
  \]
  Then by Lemma \ref{1D relation} and induction on $k$, we get
  \[
    H\psi_k=\frac{1}{\sqrt{2ka}}HA^*\psi_{k-1}=\frac{1}{\sqrt{2ka}}(A^*H+2aA^*)\psi_{k-1}=\frac{(2k-1)a+2a}{\sqrt{2ka}}A^*\psi_{k-1}=(2k+1)a\psi_k.
  \]
  So $\psi_k$ is created from $\psi_{k-1}$ by $A^*$, and $\psi_{k-1}$ is created from $\psi_k$ by $A$, vice versa. Now we state the spectral decomposition given by Fourier transform.

  \begin{proposition}
    \label{1D isometry}
    There is an isometry
    \[
      \ell^2(\Z_{\ge 0},\C)\to L^2(\R,\C),\quad k\mapsto\psi_k.
    \]
  \end{proposition}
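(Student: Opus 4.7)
The plan is to verify two things: that $\{\psi_k\}_{k\ge 0}$ is orthonormal, and that it is complete in $L^2(\R,\C)$. Together these give the desired isometry by the standard identification of $\ell^2(\Z_{\ge 0},\C)$ with the space of square-summable coefficient sequences against any orthonormal basis.

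For orthonormality, first I would directly verify from the Gaussian integral that $\|\psi_0\|=1$, and observe that $A\psi_0=0$ follows from $\left(\frac{d}{dx}+ax\right)e^{-ax^2/2}=0$. A short computation (already performed in the excerpt for general $k$) then gives $H\psi_0=a\psi_0$. Proceeding by induction, assume $H\psi_{k-1}=(2k-1)a\,\psi_{k-1}$ and $\|\psi_{k-1}\|=1$. The excerpt already establishes $H\psi_k=(2k+1)a\,\psi_k$, so I would compute the norm using the adjoint relation $\langle A^*u,A^*u\rangle=\langle u,AA^*u\rangle$, namely
\[
\|\psi_k\|^2=\tfrac{1}{2ka}\langle A^*\psi_{k-1},A^*\psi_{k-1}\rangle=\tfrac{1}{2ka}\langle\psi_{k-1},(H+a)\psi_{k-1}\rangle=\tfrac{2ka}{2ka}\|\psi_{k-1}\|^2=1,
\]
where the middle equality uses $AA^*=H+a$ from Lemma \ref{1D relation}. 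Orthogonality of $\psi_j$ and $\psi_k$ for $j\ne k$ is then immediate from the self-adjointness of $H$ and the distinctness of its eigenvalues $(2k+1)a$.

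For completeness, I would unwind the recursion: iterating $\psi_k=\frac{1}{\sqrt{2ka}}A^*\psi_{k-1}$ gives $\psi_k=c_k\,p_k(x)\,e^{-ax^2/2}$ where $p_k$ is a polynomial of degree exactly $k$ (essentially a rescaled Hermite polynomial, since $A^*=-\frac{d}{dx}+ax$ raises degree by one). Hence the span of $\{\psi_k\}_{k\ge 0}$ equals $\{p(x)e^{-ax^2/2}\mid p\in\C[x]\}$. Suppose $f\in L^2(\R,\C)$ is orthogonal to every $\psi_k$; then $\int_\R f(x)\,x^n\,e^{-ax^2/2}\,dx=0$ for all $n\ge 0$. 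Since $f\cdot e^{-ax^2/2}\in L^1(\R)\cap L^2(\R)$ (by Cauchy--Schwarz), its Fourier transform
\[
F(\zeta)=\int_\R f(x)\,e^{-ax^2/2}\,e^{i\zeta x}\,dx
\]
extends to an entire function on $\C$ whose Taylor coefficients at $0$ are scalar multiples of the vanishing moments. Therefore $F\equiv 0$, so $f(x)e^{-ax^2/2}=0$ almost everywhere, giving $f=0$.

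The only real obstacle is the completeness step, since orthonormality is a direct algebraic consequence of Lemma \ref{1D relation}. The Fourier/moment argument above is the standard way to bypass more delicate density arguments, and it fits neatly with the paper's later invocation of the Fourier transform.
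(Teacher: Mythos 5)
Your argument is correct, but note that the paper itself offers no proof of this proposition: it is stated as the classical spectral decomposition of the one--dimensional harmonic oscillator (the Hermite functions form an orthonormal basis of $L^2(\R,\C)$), with the reader implicitly referred to \cite{R}. What you have written is the standard argument that fills this gap, and both halves are sound. The orthonormality step via $\|\psi_k\|^2=\tfrac{1}{2ka}\langle\psi_{k-1},(H+a)\psi_{k-1}\rangle=\|\psi_{k-1}\|^2$ correctly exploits the identity $AA^*=H+a$ of Lemma \ref{1D relation} together with the eigenvalue computation already carried out in the text, and the integration by parts behind $\langle A^*u,A^*u\rangle=\langle u,AA^*u\rangle$ is legitimate because each $\psi_k$ is a polynomial times a Gaussian, hence Schwartz; orthogonality from the distinct eigenvalues $(2k+1)a$ of the (formally) self-adjoint $H$ is likewise fine on this class of functions. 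The completeness step via vanishing moments and the entire extension of the Fourier transform of $f(x)e^{-ax^2/2}$ is the standard way to prove that the Hermite functions span, and your justification is complete since $e^{-ax^2/2}e^{c|x|}\in L^2(\R)$ for every $c$, which gives both the entirety of $F$ and the differentiation under the integral sign. One small caveat: with the paper's stated normalization $\psi_0(x)=(a^2\pi)^{1/4}e^{-ax^2/2}$ the Gaussian integral gives $\|\psi_0\|^2=(a^2\pi)^{1/2}\sqrt{\pi/a}=\pi\sqrt{a}\ne 1$ in general; the constant should be $(a/\pi)^{1/4}$, so your ``direct verification'' of $\|\psi_0\|=1$ actually uncovers a typo in the paper rather than failing.
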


  This spectral decomposition generalizes to higher dimensional harmonic oscillators, and we will consider its structure by using a Hilbert bundle with end in dimension two.


  \subsection{2-dimensional harmonic oscillator}

  We define a 2-dimensional harmonic oscillator.
  Let $L^2(\R,\C^2)$ denote the Hilbert space of square integrable functions from $\R$ to $\C^2$. A self-adjoint non-compact operator
  \[
    H=-\frac{d^2}{dx^2}+x^2
  \]
  on $L^2(\R,\C^2)$ is called a \emph{2-dimensional Harmonic oscillator}. Note that we normalize the operator, that is, we only consider the case $a=1$ in the 1-dimensional case.

  We bring out a hidden parameter of the spectral decomposition of a 2-dimensional harmonic oscillator. Let $\UH(2)$ denote the space of 2-dimensional unitary matrices which are Hermitian. Then $U^2=E_2$ for each $U\in\UH(2)$ and the 2-dimensional identity matrix $E_2$, and so we assume $H$ is parametrized by $\UH(2)$. This parametrization enables us to define an annihilation operator and a creation operator for $U\in\UH(2)$ by
  \[
    A_U=\frac{d}{dx}+Ux\quad\text{and}\quad A_U^*=-\frac{d}{dx}+Ux
  \]
  respectively. Then we have identities among $H,A_U,A_U^*$ similarly to Lemma \ref{1D relation}, which can be easily verified from the definitions.

  \begin{lemma}
    \label{2D relation}
    For each $U\in\UH(2)$, the following identities hold:
    \begin{alignat*}{3}
      &A_UA_U^*=H+U\qquad&&A_U^*A_U=H-U\\
      &[A_U,A_U^*]=2U&&[H,A_U]=-2UA_U\qquad&&[H,A_U^*]=2UA_U^*.
    \end{alignat*}
  \end{lemma}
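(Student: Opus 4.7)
The plan is to verify each of the five identities by direct algebraic manipulation, exactly in parallel with the proof of Lemma \ref{1D relation}, exploiting two essential features of the setup. The first feature is that any $U \in \UH(2)$ satisfies $U^2 = E_2$, because $U^* = U$ (Hermitian) combined with $U^* U = E_2$ (unitary) forces $U^2 = E_2$. The second is that $U$, being a constant matrix in the spatial variable $x$, commutes with $\tfrac{d}{dx}$, while the basic commutator $[\tfrac{d}{dx}, x] = 1$ from the 1-dimensional calculus remains intact.

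First I would expand
\[
  A_U A_U^* = \bigl(\tfrac{d}{dx}+Ux\bigr)\bigl(-\tfrac{d}{dx}+Ux\bigr) = -\tfrac{d^2}{dx^2} + \bigl[\tfrac{d}{dx}, Ux\bigr] + (Ux)^2.
\]
Since $U$ commutes with $\tfrac{d}{dx}$, the commutator term equals $U[\tfrac{d}{dx}, x] = U$, and $(Ux)^2 = U^2 x^2 = x^2 E_2$ by the first feature above. This yields $A_U A_U^* = H + U$. The computation of $A_U^* A_U$ is the same except the cross term flips sign, giving $H - U$. The bracket $[A_U, A_U^*] = 2U$ is then immediate by subtraction.

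Next, for the last two identities I would use the two expressions $H = A_U A_U^* - U = A_U^* A_U + U$ to write
\[
  [H, A_U] = (A_U^* A_U + U)A_U - A_U(A_U A_U^* - U) = -(U A_U + A_U U) + (A_U^* A_U A_U - A_U A_U A_U^*),
\]
where the last difference vanishes after another application of $A_U A_U^* - A_U^* A_U = 2U$, leaving only $-(UA_U + A_U U)$. A direct expansion gives $U A_U + A_U U = 2U\tfrac{d}{dx} + 2U^2 x = 2U\bigl(\tfrac{d}{dx} + Ux\bigr) = 2 U A_U$, where $U^2 = E_2$ is used once more. Therefore $[H, A_U] = -2 U A_U$, and the identity $[H, A_U^*] = 2 U A_U^*$ follows by taking adjoints (or by the same symmetric computation).

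I do not anticipate a real obstacle here: the proof is a short bookkeeping exercise, and every step is forced once one keeps in mind the relation $U^2 = E_2$ and that $U$ is constant in $x$. The only mild subtlety is to recognize that $U$ and $A_U$ do not commute individually, so one must symmetrize $UA_U + A_U U$ correctly rather than trying to pull $U$ through $A_U$ directly; the relation $U^2 = E_2$ is precisely what makes this symmetrization collapse to the clean factor $2UA_U$.
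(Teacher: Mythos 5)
Your proposal is correct and is exactly the argument the paper intends: the paper states the identities "can be easily verified from the definitions," and your direct expansion using $U^2=E_2$ and $[\tfrac{d}{dx},x]=1$ is that verification. One bookkeeping slip to fix: in your displayed line for $[H,A_U]$ the expansion actually gives $+(UA_U+A_UU)+(A_U^*A_UA_U-A_UA_UA_U^*)$, and the cubic difference does not vanish but equals $-2(UA_U+A_UU)$; the two errors cancel, so your conclusion $[H,A_U]=-(UA_U+A_UU)=-2UA_U$ still stands (and is obtained more cleanly by expanding $[H,A_U]=[x^2,\tfrac{d}{dx}]+[-\tfrac{d^2}{dx^2},Ux]=-2x-2U\tfrac{d}{dx}=-2UA_U$ directly).
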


  The above identities together with the construction in the 1-dimensional case show that the spectral decomposition of a 2-dimensional harmonic oscillator is parametrized by $\UH(2)$. Namely, a 2-dimensional harmonic oscillator $H$ can be considered as an operator on
  \[
    \coprod_{U\in\UH(2)}F_U
  \]
  such that each $F_U$ admits the spectral decomposition using $A_U$ and $A_U^*$, where $F_U=L^2(\R,\C^2)$ for each $U\in\UH(2)$. Notice that $\coprod_{U\in\UH(2)}F_U$ has not been topologized yet, and we are going to topologize it to yields a Hilbert bundle with end. We start with specifying the topological type of the base space $\UH(2)$.

  \begin{lemma}
    \label{UH}
    There is a diffeomorphism
    \[
      \UH(2)\cong S^2\sqcup S^0.
    \]
  \end{lemma}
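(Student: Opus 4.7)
The plan is to use the spectral theorem to classify elements of $\UH(2)$ by their eigenvalue configuration. Since any $U \in \UH(2)$ is unitary and Hermitian, we have $U^* = U$ and $U^*U = I$, so $U^2 = I$ and the eigenvalues of $U$ lie in $\{+1, -1\}$. In dimension two there are exactly three possibilities: both eigenvalues $+1$, both eigenvalues $-1$, or one of each. The first two cases force $U = I$ and $U = -I$ respectively (a diagonalizable matrix with a single eigenvalue $\lambda$ equals $\lambda I$), accounting for the $S^0$ summand.

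The remaining case is that $U$ has simple eigenvalues $+1$ and $-1$. In this case $U$ is determined by its $(+1)$-eigenspace $\ell \subset \C^2$ through the explicit formula $U = P_\ell - P_{\ell^\perp} = 2P_\ell - I$, where $P_\ell$ denotes orthogonal projection onto $\ell$. This sets up a bijection between the set of such $U$ and the complex projective line $\C P^1 \cong S^2$, which parametrizes complex lines in $\C^2$. Both the assignment $\ell \mapsto 2P_\ell - I$ and its inverse (taking $U$ to its $(+1)$-eigenspace) are smooth in this regime, the latter because the eigenprojections of a Hermitian matrix depend smoothly on the matrix whenever the eigenvalues remain simple.

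Finally, I would verify that $\{I\}$ and $\{-I\}$ are isolated in $\UH(2)$ and that the middle component is both open and closed. If a sequence $U_n \in \UH(2)$ converges to $I$, then its eigenvalues converge to $(1,1)$; but since each eigenvalue must be $\pm 1$, eventually both eigenvalues equal $+1$, forcing $U_n = I$ for large $n$. The symmetric argument applies to $-I$. Hence the three subsets $\{I\}$, $\{-I\}$, and the mixed-eigenvalue locus form a disjoint decomposition into open and closed submanifolds, yielding the claimed diffeomorphism $\UH(2) \cong S^2 \sqcup S^0$.

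The argument is essentially routine linear algebra and there is no serious obstacle; the only point requiring slight care is confirming smoothness of the parametrization $\C P^1 \to \UH(2)$, which is immediate once one writes $U$ in terms of the projection $P_\ell$ and notes that $P_\ell$ varies smoothly in $\ell$ (for example via the tautological line bundle over $\C P^1$).
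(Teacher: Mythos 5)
Your proof is correct, but it takes a different route from the paper's. The paper argues entirely in coordinates: it writes a general element of $\UH(2)$ as $\begin{pmatrix}s&z\\\overline{z}&t\end{pmatrix}$, derives the defining equations $s^2+|z|^2=t^2+|z|^2=1$ and $(s+t)z=0$ directly from unitarity plus Hermitianness, and then reads off the decomposition into the traceless locus $\{s+t=0\}$ (the unit sphere in $\R\times\C\cong\R^3$) and the scalar locus $\{z=0,\ s+t\ne 0\}=\{\pm E_2\}$, with disjointness checked via the determinant. You instead argue invariantly via the spectral theorem: $U^2=I$ forces eigenvalues in $\{\pm1\}$, the repeated-eigenvalue cases give $\pm I$, and the mixed case is parametrized by the $(+1)$-eigenspace, identifying that component with $\C P^1\cong S^2$ through $\ell\mapsto 2P_\ell-I$. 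The two sphere components coincide (your image $2P_\ell-I$ is exactly the traceless Hermitian unitaries), so the content is the same, but your version is more conceptual and generalizes immediately to $\UH(n)\cong\bigsqcup_{k=0}^n \mathrm{Gr}_k(\C^n)$, at the cost of having to justify smoothness of the eigenprojection map, which you do correctly. Your separation argument via converging eigenvalues is also fine, though it can be shortened by noting that the trace, a continuous function, takes the locally constant values $2$, $0$, $-2$ on the three pieces.
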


  \begin{proof}
    It is straightforward to see that $\UH(2)$ is the space of matrices
    \[
      \begin{pmatrix}
        s&z\\\overline{z}&t
      \end{pmatrix}
    \]
    where $s,t\in\R$ and $z\in\C$ such that $s^2+|z|^2=t^2+|z|^2=1$ and $(s+t)z=0$. Then $\UH(2)$ is a union of subspaces $\{s+t=0\}$ and $\{s+t\ne 0\}\cap\{z=0\}$. Clearly, the former is diffeomorphic with $S^2$ and the latter is the two point set $\{E_2,-E_2\}$, so homeomorphic with $S^0$. Moreover, we can see that these subspaces are disjoint by looking at determinants.
  \end{proof}

  Let $B=\{U\in\UH(2)\mid\det U=-1\}$. Then $B$ is a connected component of $\UH(2)$ homeomorphic with $S^2$. Let
  \[
    \mathcal{E}=\coprod_{U\in B}F_U
  \]
  where $F_U=L^2(\R,\C^2)$. Then there is a natural map
  \[
    q\colon\mathcal{E}\to B,\quad q(F_U)=U.
  \]
  We aim to topologize $\mathcal{E}$ so that $q\colon\mathcal{E}\to B$ is a Hilbert bundle with end. As well as the 1-dimensional case, we have:

  \begin{lemma}
    \label{2D creation}
    If $\alpha_0\in L^2(\R,\C^2)$ satisfies $H\alpha_0=\alpha_0$, then
    \[
      H\alpha_k^U=(2kU+E_2)\alpha_k^U
    \]
    for $k\ge 1$, where $\alpha_k^U$ is inductively defined by
    \[
      \alpha_k^U=A_U^*\alpha_{k-1}^U\quad\text{and}\quad\alpha_0^U=\alpha_0.
    \]
  \end{lemma}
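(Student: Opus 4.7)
The plan is to prove the identity by induction on $k$, mirroring the one-dimensional argument used to derive $H\psi_k = (2k+1)a\psi_k$ just above the lemma, but now carefully tracking the matrix-valued eigenvalue $2kU + E_2$.

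The base case $k=0$ is immediate from the hypothesis $H\alpha_0 = \alpha_0 = E_2\alpha_0$, which agrees with $(2\cdot 0\cdot U + E_2)\alpha_0^U$. For the inductive step, I would assume $H\alpha_{k-1}^U = (2(k-1)U + E_2)\alpha_{k-1}^U$ and compute
\[
  H\alpha_k^U = H A_U^* \alpha_{k-1}^U = \bigl(A_U^* H + [H,A_U^*]\bigr)\alpha_{k-1}^U = A_U^*(2(k-1)U + E_2)\alpha_{k-1}^U + 2U A_U^*\alpha_{k-1}^U,
\]
invoking the commutator identity $[H, A_U^*] = 2U A_U^*$ from Lemma \ref{2D relation} in the second equality.

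The only subtle point is to pass the constant matrix $2(k-1)U + E_2$ through $A_U^*$. Since $U$ is a constant Hermitian involution (that is, $U^2 = E_2$), the matrix $U$ commutes with the differential part $-d/dx$ of $A_U^*$ and with the multiplication operator $Ux$ (because $U\cdot Ux = U^2 x = x = Ux\cdot U$, using that the scalar $x$ commutes with every matrix). Hence $U$ commutes with $A_U^*$, and therefore so does $2(k-1)U + E_2$. This gives
\[
  A_U^*(2(k-1)U + E_2)\alpha_{k-1}^U = (2(k-1)U + E_2)\alpha_k^U,
\]
and combining with the remaining term $2U\alpha_k^U$ yields $(2kU + E_2)\alpha_k^U$, completing the induction.

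The main obstacle is precisely the commutativity observation above; without $U^2 = E_2$, the matrix $U$ would not commute with $Ux$ and the clean eigenvalue formula would break. Since this is exactly why we restricted the parameter to $\UH(2)$, the argument is smooth once the commutator identity from Lemma \ref{2D relation} is in hand.
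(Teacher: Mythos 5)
Your proof is correct and follows essentially the same route as the paper's: induction on $k$ using the commutator identity $[H,A_U^*]=2UA_U^*$ from Lemma \ref{2D relation}, then commuting the constant matrix $2(k-1)U+E_2$ past $A_U^*$ to collect the eigenvalue $(2kU+E_2)$. One small inaccuracy in your closing remark: the commutativity of $U$ with the multiplication operator $Ux$ does not actually hinge on $U^2=E_2$, since $U(Ux)=U^2x=(Ux)U$ for any constant matrix $U$ and scalar variable $x$; the involution property of $U\in\UH(2)$ matters elsewhere in the construction, not for this step.
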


  \begin{proof}
    By Lemma \ref{2D relation}, we have
    \begin{align*}
      H\alpha_k^U&=HA_U^*\alpha_{k-1}^U=(A_U^*H+2UA_U^*)\alpha_{k-1}^U=(A_U^*(2(k-1)U+E_2)+2UA_U^*)\alpha_{k-1}^U\\
      &=(2kU+E_2)A_U^*\alpha_{k-1}^U=(2kU+E_2)\alpha_k^U.
    \end{align*}
    Then the statement is proved.
  \end{proof}

  Let
  \[
    D_+=\left\{\begin{pmatrix}s&z\\\overline{z}&-s\end{pmatrix}\in B\,\middle|\,s\ge 0\right\}\quad\text{and}\quad D_-=\left\{\begin{pmatrix}s&z\\\overline{z}&-s\end{pmatrix}\in B\,\middle|\,s\le 0\right\}.
  \]
  Then $D_+$ and $D_-$ are considered as the upper hemisphere and the lower hemisphere of $B\cong S^2$, so that
  \[
    P=\begin{pmatrix}1&0\\0&-1\end{pmatrix}
  \]
  is the north pole of $B$ and $-P$ is the south pole of $B$. Note that every element of $D_+$ is of the form
  \[
    U_z^+=\begin{pmatrix}\sqrt{1-|z|^2}&z\\\overline{z}&-\sqrt{1-|z|^2}\end{pmatrix}
  \]
  for $|z|\le 1$.

  \begin{lemma}
    \label{intertwiner +}
    Define a map
    \[
      \Phi^+\colon D_+\to D_+,\quad U_z^+\mapsto\frac{1}{\sqrt{2}}\begin{pmatrix}
        \sqrt{1+r}&\sqrt{1-r}e^{\sqrt{-1}\theta}\\
        \sqrt{1-r}e^{-\sqrt{-1}\theta}&-\sqrt{1+r}
      \end{pmatrix}
    \]
    where $z=\sqrt{1-r^2}e^{\sqrt{-1}\theta}$ for $0\le r\le 1$. Then $\Phi^+$ is well-defined and continuous, and satisfies
    \[
      A_U^*\Phi^+(U)=\Phi^+(U)A_P^*
    \]
    for each $U\in D_+$.
  \end{lemma}

  \begin{proof}
    Clearly, the map $\Phi^+$ is well-defined and continuous. It is straightforward to verify
    \[
      U_z^+\Phi^+(U_z^+)=\Phi^+(U_z^+)P.
    \]
    Then the proof is done.
  \end{proof}

Let us consider  the spectral decompositions of $F_U$ with respect to $H$ for $U\in D_+$. Let $\psi_0=\pi^\frac{1}{4}e^{-\frac{x^2}{2}}\in L^2(\R,\C)$, and let
  \[
    \psi_0^1=
    \begin{pmatrix}
      \psi_0\\0
    \end{pmatrix}
    \quad\text{and}\quad
    \psi_0^2=
    \begin{pmatrix}
      0\\\psi_0
    \end{pmatrix}.
  \]
  Clearly, $\psi_0^1$ and $\psi_0^2$ belongs to $L^2(\R,\C)$.
  For $U \in D_+$, $k\ge 1$ and $i=1,2$, we define
  \[
    \psi_{k,+}^{i,U}=A^*_U\psi_{k-1,+}^{i,U}\quad\text{where}\quad\psi_{0,+}^{i,U}=\Phi^+(U)\psi_0^i.
  \]
  \begin{remark}
  The trivial complex plane bundle $\UH(2)\times\mathbb{C}^2$ can be decomposed into the direct sum $L\oplus L'$ of line bundles where $L_U,L'_U\subset\{U\}\times\mathbb{C}^2$ are the eigenspaces of $U$ corresponding to the eigenvalues $1,-1$, respectively.
  The operator $A^\ast_U$ on $L^2(\mathbb{R},\mathbb{C}^2)$ is just the direct sum of
  \[
  -\frac{d}{dx}+x
  \quad\text{on $L^2(\mathbb{R},\mathbb{C})\otimes L_U$}\quad\text{and}\quad
  -\frac{d}{dx}-x
  \quad\text{on $L^2(\mathbb{R},\mathbb{C})\otimes L'_U$.}
  \]
  As in the computation in the proof of Theorem \ref{2-dim}, $L$ and $L'$ are isomorphic to the canonical line bundle on $S^2\cong\mathbb{C}P^1$ and its dual.
  \end{remark}
  
  \begin{lemma}
    \label{ground state P}
    For $U \in \UH(2)$, $k\ge 0$ and $i=1,2$,
    \[
      H\psi_{k,+}^{i,U}=(2kU+E_2)\psi_{k,+}^{i,U}.
    \]
  \end{lemma}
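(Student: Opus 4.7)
The plan is to prove the eigenvalue identity by induction on $k$, following the template of Lemma \ref{2D creation}.

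The base case $k=0$ reduces to $H\psi_0^{i,U} = U\psi_0^{i,U}$. Since $\psi_0 = \pi^{1/4}e^{-x^2/2}$ is the normalized 1-dimensional ground state with $(-d^2/dx^2 + x^2)\psi_0 = \psi_0$, the operator $H$ acts on $\psi_0^i \in L^2(\R^2,\C^2)$ componentwise as the identity. Matching this with the action of the matrix $U$ will exploit the explicit form of $U \in D_+$ together with the defining constraint $U^2 = E_2$.

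For the inductive step, I would use the commutation relation $[H, A_P^*] = 2PA_P^*$ from Lemma \ref{2D relation} to write
\[
H\psi_k^{i,U} = H A_P^*\psi_{k-1}^{i,U} = A_P^* H\psi_{k-1}^{i,U} + 2P\psi_k^{i,U}.
\]
Substituting the inductive hypothesis $H\psi_{k-1}^{i,U} = (2(k-1)P + U)\psi_{k-1}^{i,U}$ and using that $P$ commutes with $A_P^*$ (since $P^2 = E_2$, so $P$ commutes both with $d/dx$ and with multiplication by the scalar $x$), the $P$-part passes through $A_P^*$ cleanly. Combining the $(2(k-1)P) A_P^*\psi_{k-1}^{i,U}$ contribution with the extra $2P\psi_k^{i,U}$ from the commutator yields the $2kP\psi_k^{i,U}$ piece of the eigenvalue.

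The main obstacle will be the interaction of $U$ with $A_P^*$: because $U$ and $P$ need not commute for a general $U \in D_+$, the $U$-contribution in the inductive hypothesis cannot be slid past $A_P^*$ by a naive matrix commutation, and the base case $H\psi_0^i = U\psi_0^i$ itself is nontrivial since $\psi_0^i$ is defined without reference to $U$. Resolving both will rely on exploiting the specific form of the orbit $\{\psi_k^{i,U}\}_{k \ge 0}$ generated by applying $A_P^*$ to the explicit vectors $\psi_0^1, \psi_0^2$, together with the identity $U^2 = E_2$ parametrizing $D_+$.
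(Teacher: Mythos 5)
Your inductive skeleton is the same as the paper's: the paper's proof is the single sentence that $H\psi_0=\psi_0$ and Lemma \ref{2D creation} applies, and the proof of Lemma \ref{2D creation} is exactly the commutator computation $HA^*\alpha_{k-1}=(A^*H+2UA^*)\alpha_{k-1}$ that you redo with $A_P^*$. The problem is that the two ``obstacles'' you flag at the end are not technicalities that can be closed by ``exploiting the explicit form of $U$ and $U^2=E_2$''; they are genuine failures of the formula in the form you are reading it, so your plan cannot be completed. The base case you need, $H\psi_0^{i,U}=U\psi_0^{i,U}$, is false for every $U=U_z^+$ with $z\ne 0$: since $\psi_0^{i,U}=\psi_0^i$ by definition does not involve $U$, one has $H\psi_0^1=\psi_0^1=(\psi_0,0)^T$ while $U_z^+\psi_0^1=(\sqrt{1-|z|^2}\,\psi_0,\overline{z}\,\psi_0)^T$, and these differ. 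More globally, $\psi_k^{i,U}=(A_P^*)^k\psi_0^i$ is independent of $U$, so it cannot satisfy $H\psi=(2kP+U)\psi$ for all $U\in D_+$ unless it vanishes (subtract the identities for $U_z^+$ and $U_{-z}^+$; their difference is an invertible matrix). The same issue kills the inductive step: there is no way to slide a genuinely $U$-dependent matrix past $A_P^*$, and no identity involving $U^2=E_2$ will create one.

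What your analysis has actually isolated is that the additive term in the eigenvalue must be the base eigenvalue $E_2$, not $U$. The correct base case is $H\psi_0^i=E_2\psi_0^i$, and since $P$ commutes with $A_P^*$ (as you observe, because $x$ acts by scalar multiplication and $P^2=E_2$), the induction via $[H,A_P^*]=2PA_P^*$ runs with no $U$ anywhere and yields $H\psi_k^{i,U}=(2kP+E_2)\psi_k^{i,U}$. This is precisely what the paper's citation of Lemma \ref{2D creation} (with $P$ in the role of $U$ there) produces, and it is all that is used later: the $U$-dependence of the eigenbasis enters only afterwards through the intertwiner $\Phi^+(U)$ of Lemma \ref{intertwiner +}. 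So your method is the right one and the same as the paper's, but the target identity should be corrected to $(2kP+E_2)\psi_k^{i,U}$; as stated, with the ``$+U$'', it is not provable.
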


  \begin{proof}
    Since $H\psi_0=\psi_0$, the statement follows from Lemma \ref{2D creation}.
  \end{proof}

  For $U,V\in\UH(2)$, we specify the relation between $\psi_k^{i,U}$ and $\psi_k^{i,V}$. 
  For $U\in D_+$, $k\ge 1$ and $i=1,2$, it follows from Lemma \ref{intertwiner +} that
  if $\psi^{i,U}_{k-1,+}=\Phi^+(U)\Phi^+(P)^{-1}\psi^{i,P}_{k-1,+}$ holds, then
  \begin{align*}
    \psi^{i,U}_{k,+}
    &=A_U^*\psi^{i,U}_{k-1,+}\\
    &=A_U^*\Phi^+(U)\Phi^+(P)^{-1}\psi^{i,P}_{k-1,+}
    =\Phi^+(U)\Phi^+(P)^{-1}A_P^\ast\psi^{i,P}_{k-1,+}
    =\Phi^+(U)\Phi^+(P)^{-1}\psi^{i,P}_{k,+}.
  \end{align*}
  Then by induction on $k$, we get
  \begin{equation}
    \label{plus}
    \psi^{i,U}_{k,+}=\Phi^+(U)\Phi^+(P)^{-1}\psi^{i,P}_{k,+}
  \end{equation}
  for $k\ge0$.

  Quite similarly to Proposition \ref{1D isometry}, we have:

  \begin{lemma}
    \label{phi_+}
    There is an isometry
    \[
      \phi^+\colon\ell^2(\Z,\C)\xrightarrow{\cong}L^2(\R,\C^2),\quad e_k\mapsto
      \begin{cases}
        \psi_{k,+}^{1,U}&k\ge 0\\
        \psi_{-k-1,+}^{2,U}&k<0
      \end{cases}
    \]
    where $e_k$ is a sequence $(\ldots,0,0,\overset{k}{1},0,0,\ldots)$.
  \end{lemma}

  Now we are ready to define a topology on $\coprod_{U\in D_+}F_U$.

  \begin{proposition}
    \label{trivialization +}
    $\coprod_{U\in D_+}F_U$ can be topologized so that
    \[
      \bar{\phi}^+\colon D_+\times\ell^2(\Z,\C)\to\coprod_{U\in D_+}F_U,\quad(U,a)\mapsto\phi^+(a)
    \]
    is a homeomorphism and restricts to an isometry $\{U\}\times\ell^2(\Z,\C)\xrightarrow{\cong}F_U$ for each $U\in D_+$.
  \end{proposition}

  \begin{proof}
    This follows at once from \eqref{plus} and Lemma \ref{phi_+}.
  \end{proof}

  We can also get a topology on $\coprod_{U\in D_-}F_U$ similar to that in Corollary \ref{trivialization +}. Since the proofs are quite analogous, we only write results. Every element of $D_-$ is of the form
  \[
    U_z^-=\begin{pmatrix}-\sqrt{1-|z|^2}&z\\\overline{z}&\sqrt{1-|z|^2}\end{pmatrix}\in D_-
  \]
  where $|z|\le 1$.

  \begin{lemma}
    \label{intertwiner -}
    Define a map
    \[
      \Phi^-\colon D_-\to D_-,\quad U_z^-\mapsto\frac{1}{\sqrt{2}}
      \begin{pmatrix}
        -\sqrt{1-r}&\sqrt{1+r}e^{\sqrt{-1}\theta}\\
        \sqrt{1+r}e^{-\sqrt{-1}\theta}&\sqrt{1-r}
      \end{pmatrix}
    \]
    where $z=\sqrt{1-r^2}e^{\sqrt{-1}\theta}$ for $0\le r\le 1$. Then $\Phi^-$ is well-defined and continuous, and satisfies
    \[
      \Phi^-(U)A_U^*=A_{-P}^*\Phi^-(U).
    \]
  \end{lemma}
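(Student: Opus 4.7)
The plan is to mirror the proof of Lemma \ref{intertwiner +} step by step. First I would verify that the formula for $\Phi^-(U_z^-)$ defines an element of $D_-$ (one checks it is unitary, Hermitian, and has determinant $-1$), and that it depends continuously on $U_z^-$. The only subtlety is the boundary value $r=1$, which corresponds to $z=0$ and $U_z^-=-P$, where $\theta$ is undetermined; this requires a short coordinate check (the use of polar coordinates $(r,\theta)$ on the disk $\{|z|\le 1\}$, together with the identification $|z|^2=1-r^2$), analogous to the treatment of $r=1$ in the $+$ case. Away from this distinguished point the matrix entries are smooth functions of $r$ and of the complex exponentials $e^{\pm\sqrt{-1}\theta}$, so continuity is immediate.

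The operator identity $\Phi^-(U)A_U^* = A_{-P}^*\Phi^-(U)$ reduces to a pointwise algebraic identity. Indeed, $\Phi^-(U)$ is a constant $2\times 2$ matrix (independent of the variable $x$), so it commutes with $d/dx$. Since $A_V^* = -\frac{d}{dx} + Vx$ for any $V\in\UH(2)$, the $d/dx$-parts on either side cancel, and what remains is the algebraic condition
\[
  \Phi^-(U_z^-)\, U_z^- \;=\; -P\, \Phi^-(U_z^-).
\]

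I would then verify this identity by direct $2\times 2$ matrix multiplication. Writing $U_z^-=\begin{pmatrix}-\sqrt{1-|z|^2}&z\\\overline{z}&\sqrt{1-|z|^2}\end{pmatrix}$ with $|z|^2=1-r^2$ and $z=\sqrt{1-r^2}\,e^{\sqrt{-1}\theta}$, one expands each side entry by entry and checks that matching entries coincide; the four resulting scalar identities reduce to elementary manipulations with $\sqrt{1\pm r}$ and $e^{\pm\sqrt{-1}\theta}$. This mirrors the verification $\Phi^+(U_z^+)U_z^+ = P\,\Phi^+(U_z^+)$ in the proof of Lemma \ref{intertwiner +}, with the roles of $\sqrt{1+r}$ and $\sqrt{1-r}$ interchanged and a sign flip producing $-P$ in place of $P$.

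I do not expect a serious obstacle: both the well-definedness check at the coordinate degeneracy and the intertwining identity are routine. The only place where care is needed is the coordinate patch at the south pole, where the $e^{\sqrt{-1}\theta}$-factor can fail to extend continuously; this is the analogue of the issue handled in Lemma \ref{intertwiner +} and will be absorbed into the construction of the trivialization of $\mathcal{E}$ over $D_-$ that follows (analogous to Corollary \ref{trivialization +}).
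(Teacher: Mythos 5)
Your reduction of the operator identity to the pointwise matrix identity $\Phi^-(U_z^-)\,U_z^-=-P\,\Phi^-(U_z^-)$ is correct and is exactly the route the paper takes for $\Phi^+$ (the paper gives no separate proof for $\Phi^-$, asserting it is analogous). The problem is that you defer the one step that carries all the content --- the entry-by-entry verification --- and assert it will work out. It does not, for the matrix as printed. Writing $a=\sqrt{1-r}$, $b=\sqrt{1+r}$, so that $\sqrt{1-|z|^2}=r$ and $\sqrt{1-r^2}=ab$, the $(1,1)$ entry of $\Phi^-(U_z^-)U_z^-$ is $\tfrac{1}{\sqrt2}\bigl((-a)(-r)+b e^{\sqrt{-1}\theta}\cdot ab\,e^{-\sqrt{-1}\theta}\bigr)=\tfrac{1}{\sqrt2}a(1+2r)$, whereas the $(1,1)$ entry of $-P\Phi^-(U_z^-)$ is $\tfrac{1}{\sqrt2}a$; these agree only at $r=0$ and $r=1$. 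The continuity claim fails as well: at $r=1$ (i.e.\ $z=0$, $U_z^-=-P$) the printed matrix is $\begin{pmatrix}0&e^{\sqrt{-1}\theta}\\ e^{-\sqrt{-1}\theta}&0\end{pmatrix}$, which depends on the undetermined angle $\theta$, so $\Phi^-$ is not even well defined at the south pole. Your suggestion that this degeneracy ``will be absorbed into the construction of the trivialization'' is not tenable --- well-definedness and continuity of $\Phi^-$ at $-P$ are part of the lemma's assertion and must hold for the stated matrix itself.

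Both failures point to a typo in the statement: the roles of $\sqrt{1+r}$ and $\sqrt{1-r}$ should \emph{not} be interchanged relative to $\Phi^+$. Taking instead
\[
\Phi^-(U_z^-)=\frac{1}{\sqrt2}\begin{pmatrix}-\sqrt{1+r}&\sqrt{1-r}\,e^{\sqrt{-1}\theta}\\ \sqrt{1-r}\,e^{-\sqrt{-1}\theta}&\sqrt{1+r}\end{pmatrix},
\]
one checks that all four entries of $\Phi^-(U)U$ and $-P\Phi^-(U)$ agree (e.g.\ the $(1,1)$ entries are both $\tfrac{1}{\sqrt2}b$), and the coefficient of $e^{\pm\sqrt{-1}\theta}$ now vanishes at $r=1$, so the map extends continuously to $-P$ with value $-P$. (Conceptually this is the half-angle reflection $\vec m=(\vec n-\hat z)/|\vec n-\hat z|$ taking $\vec n$ to the south pole, mirroring $\vec m=(\vec n+\hat z)/|\vec n+\hat z|$ in the $+$ case.) With this correction the rest of your outline goes through verbatim; without it, the computation you postponed is precisely where the proof breaks.
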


  Then as above, defining
  \[
    \psi_{k,-}^{i,U}=A^*_U\psi_{k-1,-}^{i,U}\quad\text{where}\quad\psi_{0,-}^{i,U}=\Phi^-(U)\psi_0^i
  \]
  for $k\ge1$ and $i=1,2$, we have
  \begin{equation}
    \label{minus}
    \psi^{i,U}_k=\Phi^-(U)\Phi^-(-P)^{-1}\psi^{i,-P}_{k,-}.
  \end{equation}

  \begin{lemma}
    For each $U\in D_-$, there is an isometry
    \[
      \phi^-\colon\ell^2(\Z,\C)\xrightarrow{\cong}L^2(\R,\C^2),\quad e_k\mapsto
      \begin{cases}
        \psi_{k,-}^{2,U}&k\ge 0\\
        \psi_{-k-1,-}^{1,U}&k<0.
      \end{cases}
    \]
  \end{lemma}

  \begin{proposition}
    \label{trivialization -}
    $\coprod_{U\in D_-}F_U$ can be topologized so that
    \[
      \bar{\phi}^-\colon D_-\times\ell^2(\Z,\C)\to\coprod_{U\in D_-}F_U,\quad(U,a)\mapsto\phi^-(a)
    \]
    is a homeomorphism and restricts to an isometry $\{U\}\times\ell^2(\Z,\C)\xrightarrow{\cong}F_U$ for each $U\in D_-$.
  \end{proposition}

  We are ready to prove:

  \begin{theorem}
    \label{2-dim}
    The spectral decomposition of a 2-dimensional harmonic oscillator provides $\mathcal{E}$ with a Hilbert bundle with end such that
    \[
      \alpha_1(\mathcal{E};a)\ne 0
    \]
    where $a$ is the dual of $[(\ldots,1,1,-1,-1,\ldots)]\in\ell^\infty(\Z,\Z)_S$.
  \end{theorem}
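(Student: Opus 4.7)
The plan is to apply the clutching construction. By Corollaries \ref{trivialization +} and \ref{trivialization -}, $\mathcal{E}$ is simultaneously trivialized over each closed hemisphere $D_\pm$ of $B\cong S^2$, so $q\colon\mathcal{E}\to B$ is obtained by clutching along the equator $S^1=D_+\cap D_-$ via the transition function $g(U)=(\phi^-_U)^{-1}\circ\phi^+_U$. Under the identifications $\pi_2(B\UFP(1))\cong\pi_1(\UFP(1))\cong\ell^\infty(\Z,\Z)_S$, the homotopy class $[f]$ of the classifying map $f\colon S^2\to B\UFP(1)$ of $\mathcal{E}$ corresponds to $[g]$. Hence by the definition of $\alpha_1(\mathcal{E};a)$ (as the dual of the Hurewicz image), its evaluation on the fundamental class of $S^2$ equals $a([g])$, and the theorem reduces to showing that $g$ is a continuous map into $\UFP(1)$ and that $[g]=[(\ldots,1,1,-1,-1,\ldots)]$ up to the finite modification allowed by Proposition \ref{finite seq}.

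For the explicit computation, the first step is to parametrize the equator by $U_\theta=\left(\begin{smallmatrix}0 & e^{\sqrt{-1}\theta}\\ e^{-\sqrt{-1}\theta} & 0\end{smallmatrix}\right)$ and evaluate $\Phi^\pm(U_\theta)$ via Lemmas \ref{intertwiner +} and \ref{intertwiner -}. Using the intertwining identities $\Phi^\pm(U)A_U^*=A_{\pm P}^*\Phi^\pm(U)$, eigenfunctions of $H$ at $U$ are transported to those at the poles $\pm P$, and combined with the defining formulas for $\phi^\pm_U$, this expresses $g(U_\theta)e_k$ as the componentwise action of $\Phi^+(U_\theta)^{-1}\Phi^-(U_\theta)$ on an appropriate eigenfunction in $L^2(\R^2,\C^2)$. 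Because $\phi^+_U$ pairs the indices $k\ge 0$ with the first component of $\C^2$ (via $\psi_k^{1,U}$) and $k<0$ with the second (via $\psi_{-k-1}^{2,U}$), while $\phi^-_U$ uses the opposite pairing, the scalar factors $e^{\pm\sqrt{-1}\theta}$ contributed by the off-diagonal structure of $\Phi^+(U_\theta)^{-1}\Phi^-(U_\theta)$ depend on the sign of $k$. The upshot is that $g(U_\theta)$ is a diagonal operator on $\ell^2(\Z,\C)$ of the form $\diag(\ldots,e^{\sqrt{-1}\theta},e^{\sqrt{-1}\theta},e^{-\sqrt{-1}\theta},e^{-\sqrt{-1}\theta},\ldots)$, up to an overall sign and boundary corrections at $k=0,-1$. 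This operator is of propagation zero and continuous in $\theta$, so $q$ is indeed a Hilbert bundle with end.

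By the identification of $\pi_1(\UFP(1))\cong\ell^\infty(\Z,\Z)_S$ used in the proof of Lemma \ref{inclusion odd}, a diagonal loop $\theta\mapsto\diag(\ldots,e^{\sqrt{-1}c_k\theta},\ldots)$ represents the class $[(c_k)]\in\ell^\infty(\Z,\Z)_S$. In our case this gives $[g]=\pm[(\ldots,1,1,-1,-1,\ldots)]$ modulo finite modification, and since $a$ is by hypothesis dual to this class, $a([g])\ne 0$. The main obstacle is the explicit matrix computation of the previous paragraph: the branches of the square roots appearing in $\Phi^\pm$, the opposing conventions of $\phi^\pm_U$ on nonnegative versus negative indices, and the treatment of the two boundary indices $k=0,-1$ must combine in precisely the right way so that the two halves of $\Z$ contribute opposite signs to the exponent. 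A sign or shift error here could collapse $[g]$ into the subgroup $\iota(\Q)\subset\ell^\infty(\Z,\Z)_S$ of periodic elements, which is the regime detected by Corollary \ref{inclusion periodic} and on which the distinguished functional $a$ would a priori vanish.
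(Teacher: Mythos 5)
Your proposal follows essentially the same route as the paper: trivialize $\mathcal{E}$ over the two hemispheres $D_\pm$ via Corollaries \ref{trivialization +} and \ref{trivialization -}, read off the clutching function on the equator as the diagonal (propagation-zero) loop $z\mapsto\diag(\ldots,z,z,z^{-1},z^{-1},\ldots)$, and identify its class in $\pi_1(\UFP(1))\cong\ell^\infty(\Z,\Z)_S$ with $[(\ldots,1,1,-1,-1,\ldots)]$. The paper states the transition function directly rather than organizing the computation through $\Phi^+(U_\theta)^{-1}\Phi^-(U_\theta)$, but the content is the same, and your identification of diagonal loops with classes $[(c_k)]$ via the block-diagonal embedding of Lemma \ref{inclusion odd} is the correct mechanism.

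There is, however, one step you omit that the argument genuinely needs and that the paper supplies: you must show that $[(\ldots,1,1,-1,-1,\ldots)]$ is \emph{nonzero} in $\ell^\infty(\Z,\Z)_S$. Your closing inference ``since $a$ is by hypothesis dual to this class, $a([g])\ne 0$'' is circular unless the class is nonzero --- if it vanished in the coinvariants, every functional would annihilate it and no choice of ``dual'' $a$ could rescue the conclusion; the hypothesis of the theorem would simply be incoherent. The paper closes this by a short direct argument: if $(\ldots,1,1,-1,-1,\ldots)=(1-S)(b)$ for some $b=(b_i)\in\ell^\infty(\Z,\Z)$, then the entries $b_i$ would have to grow without bound (the differences $b_i-b_{i+1}$ are eventually constant and nonzero on each half of $\Z$), contradicting boundedness; hence the class is nontrivial and $a$ can be taken to evaluate to $1$ on it. Note that this is exactly the point where your worry about collapsing into $\iota(\Q)$ is resolved as well: the Ces\`{a}ro average of $(\ldots,1,1,-1,-1,\ldots)$ is $0$, so periodicity alone could not detect this class, and the unboundedness argument is what does the work. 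Add that paragraph and your proof is complete.
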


  \begin{proof}
    By Corollaries \ref{trivialization +} and \ref{trivialization -}, $q\colon\mathcal{E}\to B$ can be topologized by the union
    \[
      \mathcal{E}=\bar{\phi}^+(D_+\times\ell^2(\Z,\C))\cup\bar{\phi}^-(D_-\times\ell^2(\Z,\C))
    \]
    so that $\mathcal{E}$ trivializes on each $D_\pm$.
    For $U=\begin{pmatrix}0&z\\z^{-1}&0\end{pmatrix}\in D_+\cap D_-$,
    let us check
    \[
      \psi_{k,-}^{1,U}
      =-z^{-1}\psi_{k,+}^{2,U}
      \quad\text{and}\quad
      \psi_{k,-}^{2,U}
      =z\psi_{k,+}^{1,U}.
    \]
    For $k=0$, this can be seen by
    \begin{align*}
	  &\psi_{0,-}^{1,U}
	  =\Phi^-(U)\psi_0^1
	  =\frac{\psi_0}{\sqrt{2}}\begin{pmatrix}-1\\z^{-1}\end{pmatrix},
	  \quad
	  &&\psi_{0,+}^{2,U}
	  =\Phi^+(U)\psi_{0,+}^{2,P}
	  =\frac{\psi_0}{\sqrt{2}}\begin{pmatrix}z\\-1\end{pmatrix},\\
	  &\psi_{0,-}^{2,U}
	  =\Phi^-(U)\psi_{0,-}^{2,-P}
	  =\frac{\psi_0}{\sqrt{2}}\begin{pmatrix}z\\1\end{pmatrix},
	  \quad
	  &&\psi_{0,+}^{1,U}
	  =\Phi^+(U)\psi_{0,+}^{1,P}
	  =\frac{\psi_0}{\sqrt{2}}\begin{pmatrix}1\\z^{-1}\end{pmatrix}.
	\end{align*}
    The induction step is verified as
    \begin{align*}
      \psi_{k,-}^{1,U}
      =A_U^\ast\psi_{k-1,-}^{1,U}
      =A_U^\ast(-z^{-1}\psi_{k-1,+}^{2,U})
	  =-z^{-1}\psi_{k,+}^{2,U}.
    \end{align*}
    The relation $\psi_{k,-}^{2,U}=z^{-1}\psi_{k,+}^{1,U}$ similarly follows.
    So the transition function is given by
    \begin{equation}
      \label{transition function phi}
      D_+\cap D_-\to\U(\ell^2(\Z,\C)),\quad z\mapsto\diag(\ldots,-z^{-1},-z^{-1},-z^{-1},z,z,z,\ldots)
    \end{equation}
    where we identify $D_+\cap D_-$ with $\{z\in\C\mid|z|=1\}$. Then the transition function is a continuous function into $\UFP(1)$. Thus $q\colon\mathcal{E}\to B$ is a Hilbert bundles with end.

    It remains to compute the characteristic class. By \eqref{transition function phi}, the transition function represents $a\in\ell^\infty(\Z,\Z)_S\cong\pi_1(\UFP(1))$. Suppose $(\ldots,1,1,-1,-1,\ldots)=(1-S)(b)$ for some $b=(b_i)\in\ell^\infty(\Z,\Z)$. Then $\{|b_i|\mid i\in\Z\}$ must be unbounded, so $[a]$ is non-trivial in $\ell^\infty(\Z,\Z)_S$, completing the proof.
  \end{proof}

  The proof of Theorem \ref{2-dim} implies:

  \begin{corollary}
    Let $E_n=H$ for $n\ge 0$ and $E_n=H^*$ for $n<0$, where $H$ is the Hopf line bundle over $S^2$. Let $\widehat{E}$ be the completed sum of $\{E_n\}_{n\in\Z}$. Then
    \[
      \mathcal{E}\cong\widehat{E}.
    \]
    as Hilbert bundles with ends.
  \end{corollary}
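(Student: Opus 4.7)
The plan is a direct comparison of clutching data. Both $\mathcal{E} \to B$ and $\widehat{E} \to B$ live over $B \cong S^2$, and the argument in the proof of Theorem \ref{2-dim} already exhibits $\mathcal{E}$ via the hemisphere cover $B = D_+ \cup D_-$, with trivializations supplied by Corollaries \ref{trivialization +} and \ref{trivialization -}. I would use precisely the same cover for $\widehat{E}$, check that each individual summand $E_n$ is simultaneously trivialized on $D_\pm$, and then match the two transition functions on the equator as maps $D_+\cap D_-\to\UFP(1)$.

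Concretely, the proof of Theorem \ref{2-dim} records that with respect to the trivializations built from $\phi_U^\pm$, the transition function of $\mathcal{E}$ on $D_+\cap D_-\cong S^1$ is
\[
z \mapsto \diag(\ldots,z,z,z,z^{-1},z^{-1},z^{-1},\ldots),
\]
where the pattern of exponents is the sequence $(\ldots,1,1,-1,-1,\ldots)\in\ell^\infty(\Z,\Z)$ indexed by the basis $\{e_k\}_{k\in\Z}$, and the switch between $z$ and $z^{-1}$ is determined by the split between the two species of eigenfunctions in Proposition \ref{phi_+}. For $\widehat{E}$, using standard hemispherical trivializations of $H$ with equatorial clutching function $z$ (and $z^{-1}$ for $H^*$), Proposition \ref{completed sum} and Theorem \ref{completed sum end} assemble these into trivializations of $\widehat{E}$ on $D_\pm$, with equatorial transition function the diagonal direct sum having $z$ in positions $n\ge 0$ (where $E_n=H$) and $z^{-1}$ in positions $n<0$ (where $E_n=H^*$). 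Under the identification $k\leftrightarrow n$ that matches the two switches, this is literally the same element of $\UFP(1)$ as the transition function of $\mathcal{E}$, and since both bundles are given by the same clutching data over the same open cover, they are isomorphic as Hilbert bundles with ends.

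The main obstacle is purely bookkeeping: I need to verify that the ordering conventions inherited from $\phi_U^\pm$ in Lemmas \ref{intertwiner +} and \ref{intertwiner -} place the switch of $\mathcal{E}$ at exactly the index at which $\widehat{E}$ switches from $H^*$ to $H$, and that the orientation convention for the Hopf clutching function is taken consistently with the parametrization of $D_+\cap D_-$ by $\{z\in\C\mid|z|=1\}$ used in the computation. If either convention comes out reversed, it can be fixed by replacing $H$ by $H^*$ in the definition of $\widehat{E}$, or equivalently by composing one of the trivializations on $D_\pm$ with an index reflection on a fixed finite block, which lies in $\UFP(1)$ and so yields an isomorphism of Hilbert bundles with ends.
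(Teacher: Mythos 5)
Your proposal is correct and is exactly the argument the paper intends: the paper offers no separate proof, deriving the corollary directly from the fact that the equatorial transition function of $\mathcal{E}$ computed in the proof of Theorem \ref{2-dim}, namely $z\mapsto\diag(\ldots,z,z,z,z^{-1},z^{-1},z^{-1},\ldots)$, coincides with the clutching function of the completed sum $\widehat{E}$ over the hemisphere cover. Your extra care about orientation conventions for the Hopf clutching function is reasonable (and more than the paper provides), though note the correct fix for a mismatched sign is to reverse the orientation of the equatorial parameter rather than a finite-block reflection, which could not interchange the two infinite tails.
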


  We give an important remark on Theorem \ref{2-dim}. If we instead choose isometries such that
  \[
    \ell^2(\Z,\C)\to L^2(\R,\C^2),\quad e_k\mapsto
    \begin{cases}
      \psi_n^{1,\pm P}&k=2n\\
      \psi_n^{2,\pm P}&k=2n+1
    \end{cases}
  \]
  then $q\colon\mathcal{E}\to B$ is also a Hilbert bundle with end by the corresponding trivialization. With this choice of isometries, the transition function represents
  \[
    [(\ldots,-1,1,-1,1,\ldots)]\in\ell^\infty(\Z,\Z)_S\cong\pi_1(\UFP(1)).
  \]
  However, this element is trivial because $(\ldots,-1,1,-1,1,\ldots)=(1-S)(\ldots,1,0,1,0,\ldots)$. Then with the above choice of trivializations, $\mathcal{E}$ 
   can be a trivial Hilbert bundle with end. This observation shows that a different choice of trivializations possibly gives a different end of the same Hilbert bundle, and characteristic classes can distinguish different ends of the same Hilbert bundle.

\end{document}